\newtheorem{thm}{Theorem}[section]
\newtheorem{lem}[thm]{Lemma}
\newtheorem{cor}[thm]{Corollary}
\theoremstyle{remark}
\newtheorem{rem}[thm]{Remark}
\theoremstyle{definition}
\newtheorem{definition}[thm]{Definition}
\newtheorem{ex}[thm]{Example}
\title{Extension of Positive Definite functions}
\author{Palle E.T. Jorgensen and Robert Niedzialomski}
\address{Department of Mathematics, The University of Iowa, 14 MacLean Hall, Iowa City, IA-52242, USA} 
\email{palle-jorgensen@uiowa.edu}
\address{Department of Mathematics and Statistics, The University of Tennessee at Martin, 424 Humanities Building, Martin, TN-38238, USA}
\email{rniedzia@utm.edu}
\subjclass[2010]{47B32, 42A82, 47B25, 22D10}
\keywords{Positive definite functions, reproducing-kernel Hilbert spaces, symmetric(Hermitian) operators (unbounded), commutativity,  commuting selfadjoint extensions, spectral resolution, unitary representations}
\date{}
\begin{document}

\begin{abstract} Let $\Omega\subset\mathbb{R}^n$ be an open, connected subset of $\mathbb{R}^n$, and let $F\colon\Omega-\Omega\to\mathbb{C}$, where $\Omega-\Omega=\{x-y\colon x,y\in\Omega\}$, be a continuous positive definite function. We give necessary and sufficient conditions for $F$ to have an extension to a continuous positive definite function defined on the entire Euclidean space $\mathbb{R}^n$. The conditions are formulated in terms of strong commutativity of a system of certain unbounded selfadjoint operators defined on a Hilbert space associated to $F$. When a positive definite function $F$ is extendable, we show that it is characterized by existence of associated unitary representations of $\mathbb{R}^n$. Different positive definite extensions correspond to different unitary representations. We prove that each such unitary representation has simple spectrum. We give necessary and sufficient conditions for a continuous positive definite function to have exactly one extension. Our proof regarding extensions of positive definite functions carries over mutatis mutandis to the case of conditionally negative definite functions, which has applications to Gaussian stochastic processes, whose increments in mean-square are stationary (e.g., fractional Brownian motion).
\end{abstract}

\maketitle

\section{Introduction}

We consider the extension problem for continuous positive definite functions defined on an open subset of the Euclidean space $\mathbb{R}^n$. By ``extension" we always mean an extension to a continuous and positive definite function defined on the entire space $\mathbb{R}^n$. We give necessary and sufficient conditions for such a function to have an extension. The conditions (see Theorem \ref{MTh}) are formulated in terms of strong commutativity of a certain system of selfadjoint unbounded operators defined on a Hilbert space associated to our positive definite function. 

Let $\Omega\subset\mathbb{R}^n$ be an open set and let $F\colon\Omega-\Omega\to\mathbb{C}$ be a function, where $\Omega-\Omega=\{x-y\colon x,y\in\Omega\}$. We say that $F$ is \textit{positive definite} if for any $x_1,\ldots,x_m\in\Omega$ and any $c_1,\ldots,c_m\in \mathbb{C}$ we have
\begin{equation}\label{pdfun}
\sum_{j,k=1}^m F(x_j-x_k)c_j\overline{c_k}\geq 0.
\end{equation}
In particular, if $F$ is a positive definite function, then $F(0)\geq 0$ and for any $z\in \Omega-\Omega$
\begin{equation*}
F(-z)=\overline{F(z)}\quad\textrm{and}\quad |F(z)|\leq F(0).
\end{equation*}

\begin{rem} The assumption that $\Omega$ is an open set is not necessary for the definition, and the definition works for any set $\Omega\subset\mathbb{R}^n$, or even, for example, any subset of an abelian group.
\end{rem}

\begin{rem} If $\Omega\subset \mathbb{R}^n$ is a symmetric ($x\in\Omega\Rightarrow -x\in\Omega$) and convex set, then $\Omega-\Omega=2\Omega=\{2x\colon x\in\Omega\}$. 

\end{rem}

Below, we relate two extension problems, which on the surface might seem unrelated. One is the question of extending positive definite functions defined initially only on a fixed open subset $\Omega$ of $\mathbb{R}^n$; and the other is the question 
of finding $n$ commuting selfadjoint operators, which extend $n$ \textit{Hermitian operators}, which formally commute on a dense domain in a Hilbert space. In the case when a given locally defined positive definite function is extendable we show that it is characterized by existence of a certain system of commuting selfadjoint extension operators; or, equivalently, existence of associated unitary representations $U$ of $\mathbb{R}^n$; ``unitary representation'' in this paper means a strongly continuous one. Different positive definite extensions to $\mathbb{R}^n$ of the initial function correspond to different unitary representations.  Moreover (see corollary \ref{intert}) we find the complete spectral transform for each of the admissible $U$; and in particular, we prove that each such unitary representation $U$ has simple spectrum.

In earlier investigations, the question of existence of commuting selfadjoint extension operators has played a role in the study of duality questions in harmonic analysis. For example, in \cite{Fug74}, Fuglede showed 
that the following two questions for domains in $\mathbb{R}^n$ are directly related to existence of commuting selfadjoint extension operators; and subsequently (see e.g., \cite{IKT03}) these two questions have received a great deal of attention. Both are
existence problems. One question is geometric: tiling by translations, and the other asks whether $L^2(\Omega)$ has an orthogonal basis of Fourier frequencies.

We stress that different extension problems considered in (e.g, \cite{Fug74, Jor1, JP99, JT, PW01}) involving unbounded operators are quite different from our present extension problems. Here, our starting point is a fixed partially defined positive definite function $F$. And our present considerations depend primarily on a local analysis. Further, the formulation of our problem, and its solution, relies crucially on a specific reproducing kernel Hilbert space (RKHS) consisting of continuous functions on a fixed subset in $\mathbb{R}^n$. Specifically, we start with a fixed and partially defined (in a proper subset in $\mathbb{R}^n$) function $F$, positive definite and continuous on the subset. From this function $F$ we built the associated RKHS $\mathcal{H}_F$, and we work exclusively in this Hilbert space $\mathcal{H}_F$. Even our assumptions in the statement of our Theorem \ref{MTh}, are stated for a system of operators in this RKHS  $\mathcal{H}_F$. By contrast, the analysis in \cite{Jor1, JP99, JT} is global in nature; and quite different also in a number of other respects. For other literature sources, we mention that our local analysis uses ideas from \cite{AG74} by L. N. Argabright and J. Gil. de Lamadrid, \cite{LS} by Schwartz, and from \cite{NelB} by Nelson. For basic lemmas regarding RKHSs, we refer to \cite{A}, and for unbounded operators, \cite{Sch}.

Returning to positive definite functions, the extension problem for $n=1$ was considered first by Krein \cite{Kn}. For $n > 1$, in the general setting mentioned above, the question of existence 
of positive definite extensions is subtle. For example, for $n = 2$,  Rudin in \cite{Rud1, Rud2} identified cases when no positive definite extensions exist. Other special cases, when extensions are possible, were studied for example in \cite{Jor2,Jor1}. In these results, suitable symmetry restrictions are imposed.

Positive definite functions constitute a big class of positive definite kernels. To see this, let us recall that a function $K\colon X\times X\to\mathbb{C}$, where $X$ is any set, is called a \textit{positive definite kernel} if for any points  $x_1,\ldots,x_m\in X$ and any $c_1,\ldots,c_m\in \mathbb{C}$ we have that
\begin{equation}\label{pdkernel} 
\sum_{j,k=1}^m K(x_j,x_k)c_j\overline{c_k}\geq 0.
\end{equation}
Therefore, we could say that a function $F\colon \Omega-\Omega\to\mathbb{C}$, where $\Omega\subset\mathbb{R}^n$ is an open set, is positive definite if the kernel $K_F\colon \Omega\times\Omega\to\mathbb{C}$ given by $K(x,y)=F(x-y)$ is positive definite. We refer the reader to \cite{A} for an introduction to positive definite kernels. 

Since positive definite kernels are \textit{covariance functions} of Gaussian stochastic processes, results about extensions of positive definite functions have implications for probability theory. Indeed, our present result is 
motivated, in part, by recent developments in the study of Gaussian stochastic processes, see e.g., \cite{AJL, AlJo12}. See also example \ref{enegx} for the Ornstein-Uhlenbeck process. By a classical result \cite{PaSc72}, see also \cite{P,PBook,PV}, every positive definite kernel defined on $S\times S$, where $S$ a fixed set, 
is the covariance function of a Gaussian process indexed by $S$. Therefore, if $S$ is a subset of $\mathbb{R}^n$, a possibility of a positive definite extension to $\mathbb{R}^n$ will then yield a Gaussian process indexed by $\mathbb{R}^n$. And continuity of such an extension is a necessary condition for existence of continuous sample paths for the $\mathbb{R}^n$ process. For applications, 
see e.g., \cite{BGT07}.

While there is a substantial literature on positive definite functions and their role in applications, the problem of \textit{extensions} is less studied. Here we address this question in the case of extension from certain open subsets $\Omega$ in  $\mathbb{R}^n$. For $n = 1$, an extension always exists provided that $\Omega$ is a bounded interval; this was addressed first by Krein \cite{Kn}; but if  $n > 1$, it is known, that extensions may fail to exist, see e.g., \cite{Rud2} and the discussion below. The case of several variables is of interest in physics and in the theory of boundary values (\cite{GJ}). In these areas, the distinction between ``local'' (in some open subset in $\mathbb{R}^n$) vs ``global" (all of $\mathbb{R}^n$) concerns particles at infinity, as well as the presence of \textit{commuting observables}, if we adopt the axiom of quantum theory with observables represented by selfadjoint operators in Hilbert space.
 
For background references on positive definite functions and their applications, see \cite{BCR}, \cite{Fug74}, \cite{Nel}, \cite{CJ}, \cite{BJ}, \cite{PBook}, \cite{P}, \cite{PV}. In a special case, in \cite{Fug74}, the idea of connecting spectral theory with the study of commuting selfadjoint extensions was suggested there.

\subsection{Brief history of the extension problem}

Krein (\cite{Kn}) studies this extension problem in the case when $\Omega$ is a bounded interval $(-r,r)$, $r>0$. He proves that a positive definite extension always exists, but is not necessarily unique. Later Devinatz (\cite{D}) gives necessary conditions for existence of extensions when the domain is a rectangle $(-r_1,r_1)\times (-r_2,r_2)\subset \mathbb{R}^2$. In \cite{Rud1} Rudin extends the result of M. Krein and proves the following. Every radial positive definite function defined on a ball $B(0,r)=\{x\in\mathbb{R}^n\colon |x|<r\}$ extends to a radial positive definite function defined on the entire space $\mathbb{R}^n$. Nussbaum (\cite{N}) gives a characterization (similar to the theorem of Bochner, see section \ref{BT}) of continuous positive definite functions obtaining the result of Rudin as a corollary. Moreover, in \cite{Rud2} the author gives an example of a bounded continuous positive definite function defined on the unit cube $I^n=\{x=(x_1,\ldots,x_n)\in\mathbb{R}^n\colon |x_i|<1,i=1,\ldots,n\}$ that cannot be extended. Therefore, in order to be guaranteed existence of an extension, we have to impose some symmetry conditions on the function, or on the domain of the function. 

\begin{rem} The problem of extending positive definite functions doesn't have to be limited to the case of a subset of Euclidean space. The notion of a positive definite function can be introduced, for example, for groups and semi-groups, and the extension problem can be considered there. One article in this direction that we would like to mention, that we borrowed some ideas from, is \cite{J}, where the author studies positive definite functions defined on a Lie group. 
Moreover, the range of a positive definite function doesn't have to be the set of complex numbers. For example matrix-valued or operator-valued positive definite function have been studied. 
\end{rem}

\subsection{Statement of the problem}

We have the following well-known characterization of continuous positive definite functions (see \cite{St} and reference therein). Let $\Omega\subset\mathbb{R}^n$ be an open set.

\begin{thm}\label{MainLemma} Let $F\colon \Omega-\Omega\to\mathbb{C}$ be a continuous function. Then $F$ is positive definite if and only if for any $\varphi\in C_0(\Omega)$ (see remark below) the following holds
\begin{equation*}
\int_{\Omega}\int_{\Omega}F(y-x)\varphi(x)\overline{\varphi(y)}dxdy\geq 0.
\end{equation*}
\end{thm}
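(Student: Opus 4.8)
The plan is to prove both implications of the characterization, connecting the pointwise positive-definiteness condition \eqref{pdfun} with the integral condition through an approximation argument. The forward direction (positive definite implies the integral inequality) is the more routine one: I would begin with a test function $\varphi\in C_0(\Omega)$ and recognize that the double integral $\int_\Omega\int_\Omega F(y-x)\varphi(x)\overline{\varphi(y)}\,dx\,dy$ is a limit of Riemann sums. Concretely, partitioning the support of $\varphi$ into small cells and choosing sample points $x_1,\ldots,x_m\in\Omega$, the Riemann sum takes the form $\sum_{j,k=1}^m F(x_j-x_k)c_j\overline{c_k}$ with $c_j=\varphi(x_j)\,\Delta_j$ (where $\Delta_j$ is the volume of the $j$-th cell), which is nonnegative by \eqref{pdfun}. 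Since $F$ is continuous and $\varphi$ has compact support in the open set $\Omega$, the integrand is uniformly continuous on the compact set $\operatorname{supp}\varphi\times\operatorname{supp}\varphi$, so the Riemann sums converge to the integral; the limit of nonnegative quantities is nonnegative, giving the inequality.

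For the converse (the integral condition implies positive-definiteness), the strategy is to recover the finite sums \eqref{pdfun} by concentrating the test functions $\varphi$ near the chosen points $x_1,\ldots,x_m\in\Omega$. Fix distinct points $x_1,\ldots,x_m\in\Omega$ and scalars $c_1,\ldots,c_m\in\mathbb{C}$. Since $\Omega$ is open, I can choose disjoint open balls $B(x_j,\varepsilon)\subset\Omega$ and take an approximate identity: let $\psi_\varepsilon$ be a nonnegative bump function supported in $B(0,\varepsilon)$ with $\int\psi_\varepsilon=1$, and set $\varphi=\sum_{j=1}^m c_j\,\psi_\varepsilon(\,\cdot\,-x_j)$, which lies in $C_0(\Omega)$ for $\varepsilon$ small. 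Substituting into the integral inequality yields a sum over $j,k$ of terms $c_j\overline{c_k}\int\int F(y-x)\psi_\varepsilon(x-x_j)\psi_\varepsilon(y-x_k)\,dx\,dy$. As $\varepsilon\to 0$, the mollifiers concentrate at $x_j$ and $x_k$, and by continuity of $F$ each such integral converges to $F(x_k-x_j)$ (up to the orientation convention in the statement). Passing to the limit preserves nonnegativity, delivering exactly \eqref{pdfun}.

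The main obstacle I anticipate is purely a matter of bookkeeping rather than deep content: one must verify the convergence claims carefully and keep track of the sign/orientation convention, since the integral is written with $F(y-x)$ while the defining sum uses $F(x_j-x_k)$. A minor subtlety is that $C_0(\Omega)$ in the statement (as flagged by the remark referenced in the theorem) likely means continuous functions of compact support rather than merely continuous functions vanishing at the boundary; the mollifier construction above lives safely inside compactly supported functions, so it adapts to either reading. Continuity of $F$ is used in both directions to pass from discrete sums to integrals and back, and the openness of $\Omega$ is what guarantees room to place the supporting balls; neither hypothesis can be dropped.
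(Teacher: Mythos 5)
Your proposal is correct: the forward direction via Riemann sums over the compact support of $\varphi$ (using uniform continuity of $F$ on the compact set $\operatorname{supp}\varphi-\operatorname{supp}\varphi$) and the converse via mollifiers concentrating at the chosen points is the standard argument for this classical equivalence, and you correctly flag the only real pitfalls --- the $F(y-x)$ versus $F(x_j-x_k)$ orientation, which is resolved by replacing the coefficients $c_j$ with their conjugates in the quadratic form, and the reading of $C_0(\Omega)$ as compactly supported continuous functions, which the paper's remark confirms. The paper itself states this theorem without proof, citing Stewart's survey, so there is no in-paper argument to compare against; your route is the expected one and is complete.
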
 
\begin{rem} The theorem holds true if we replace the class $C_0(\Omega)$ of continuous functions with compact support with the class $C_0^\infty(\Omega)$ of smooth functions with compact support.
\end{rem}

\noindent For $\varphi\in C_0^\infty(\Omega)$ we define a function $F_{\varphi}\colon \Omega\to\mathbb{C}$ by
\begin{equation*}
F_{\varphi}(x)=\int_{\Omega}F(x-y)\varphi(y)dy
\end{equation*} 
and for $\varphi,\psi\in C_0^\infty(\Omega)$ we put
\begin{equation}\label{IP}
\langle F_{\varphi},F_{\psi}\rangle=\int_{\Omega}
\int_{\Omega}F(y-x)\varphi(x)\overline{\psi(y)}dxdy.
\end{equation}
We define 
\begin{equation}\label{densew}
\mathcal{W}_F=\{F_\varphi\colon \varphi\in C^\infty_0(\Omega)\}.
\end{equation}
Then $\mathcal{W}_F$ is a complex vector space and $\langle\cdot,\cdot\rangle$ is a well-defined complex inner-product on $\mathcal{W}_F$. We complete the inner-product space $(\mathcal{W}_F,\langle\cdot,\cdot\rangle)$ to get a complex Hilbert space $\mathcal{H}_F$. For $j=1,\ldots,n$ we consider well-defined densely defined operators $S_j^F\colon \mathcal{W}_F\subset\mathcal{H}_F\to\mathcal{W}_F\subset\mathcal{H}_F$, with common domain $\mathcal{W}_F$, given by
\begin{equation}\label{pdo}
S_j^F(F_\varphi)=-iF_{\frac{\partial\varphi}{\partial x_j}}.
\end{equation} 
The main theorem we prove is the following. 

\begin{thm}\label{MTh} Let $F\colon \Omega-\Omega\to\mathbb{C}$ be a continuous positive definite function, where $\Omega$ is an open and connected subset of $\mathbb{R}^n$. Then there exists an extension of $F$ to a continuous positive definite function defined on the entire Euclidean space $\mathbb{R}^n$ if and only if there exist selfadjoint strongly commuting extension of the system of operators $\{S_j^F\}_{j=1}^n$  in $\mathcal{H}_F$.
\end{thm}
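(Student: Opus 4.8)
The plan is to route both implications through a strongly continuous unitary representation $U$ of $\mathbb{R}^n$ on $\mathcal{H}_F$ together with Bochner's theorem, exploiting the fact that the $S_j^F$ are by construction the infinitesimal generators of translation. Indeed, differentiating the identity $F_{\varphi(\cdot+t)} = F_\varphi(\cdot+t)$ on the dense domain $\mathcal{W}_F$ shows that $-iS_j^F$ generates the local translation flow $t\mapsto F_{\varphi(\cdot+te_j)}$. So ``strongly commuting selfadjoint extensions of $\{S_j^F\}$'' ought to be exactly ``the generators of a unitary action of $\mathbb{R}^n$ by translations on $\mathcal{H}_F$,'' and a positive definite extension of $F$ ought to be exactly a diagonal matrix coefficient of such an action. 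Throughout I will use that the reproducing kernels $F_x := F(\cdot-x)$, $x\in\Omega$, lie in $\mathcal{H}_F$ as norm-limits of $F_\varphi$ along an approximate identity, that their span is dense, and that $\langle F_x,F_y\rangle_{\mathcal{H}_F}=F(y-x)$.

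For the direction ($\Leftarrow$), suppose $A_1,\ldots,A_n$ are strongly commuting selfadjoint extensions of $S_1^F,\ldots,S_n^F$. The joint spectral theorem gives a projection-valued measure $E$ on $\mathbb{R}^n$ with $A_j=\int\lambda_j\,dE(\lambda)$, and $U(t):=\int e^{it\cdot\lambda}\,dE(\lambda)$ is a strongly continuous unitary representation. The first key step is a covariance lemma: for $\varphi\in C_0^\infty(\Omega)$ and $t$ with $\operatorname{supp}\varphi+t\subset\Omega$ one has $U(t)F_\varphi=F_{\varphi(\cdot+t)}$. I would prove this by observing that both curves $t\mapsto U(t)F_\varphi$ and $t\mapsto F_{\varphi(\cdot+t)}$ satisfy the same first-order system $\tfrac{d}{dt_j}v=iA_jv$ with common initial value $F_\varphi$ --- here the hypothesis $A_j\supset S_j^F$ is exactly what makes the second curve a solution --- and invoking uniqueness. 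Passing to the limit $\varphi\to\delta_x$ yields $U(t)F_x=F_{x-t}$ for $x,x-t\in\Omega$ close together, and the connectedness of $\Omega$ then lets me propagate this along a path to get $U(t)F_x=F_{x-t}$ whenever $x,x-t\in\Omega$. Finally I set $\tilde F(w):=\langle U(w)F_{x_0},F_{x_0}\rangle$ for a fixed base point $x_0$; this is automatically continuous and positive definite, since $\sum_{j,k}\tilde F(w_j-w_k)c_j\overline{c_k}=\|\sum_j c_jU(w_j)F_{x_0}\|^2\ge 0$. That $\tilde F$ restricts to $F$ on $\Omega-\Omega$ follows from the covariance by a unitarity trick: for $w\in\Omega-\Omega$ choose $s$ with $x_0-s\in\Omega$ and $x_0-w-s\in\Omega$ (possible precisely because $w\in\Omega-\Omega$), and compute $\langle U(w)F_{x_0},F_{x_0}\rangle=\langle U(w+s)F_{x_0},U(s)F_{x_0}\rangle=\langle F_{x_0-w-s},F_{x_0-s}\rangle=F(w)$.

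For ($\Rightarrow$), suppose $F$ has a continuous positive definite extension $\tilde F$ on $\mathbb{R}^n$. By Bochner, $\tilde F=\widehat{\mu}$ for a finite positive Borel measure $\mu$, and on $L^2(\mu)$ the operators $U(t)=$ multiplication by $e^{it\cdot\lambda}$ form a unitary representation whose commuting selfadjoint generators are $M_j=$ multiplication by $\lambda_j$. I would introduce $W\colon\mathcal{H}_F\to L^2(\mu)$, $W(F_\varphi)=\widehat{\varphi}$; substituting the Bochner representation into the inner product gives $\langle F_\varphi,F_\psi\rangle=\int\widehat{\varphi}\,\overline{\widehat{\psi}}\,d\mu$, so $W$ is isometric, and integration by parts yields both the relation $\lambda_j\widehat{\varphi}=\widehat{-i\partial_j\varphi}$ and the intertwining $WS_j^F=M_jW$. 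If the range $\mathcal{R}=\overline{W(\mathcal{W}_F)}$ reduces the family $\{M_j\}$, then $A_j:=W^{-1}(M_j|_{\mathcal{R}})W$ are strongly commuting selfadjoint extensions of $S_j^F$ on $\mathcal{H}_F$, as required.

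I expect the reduction of $\mathcal{R}$ to be the main obstacle and the genuine content of this direction. The identity $\lambda_j\widehat{\varphi}=\widehat{-i\partial_j\varphi}$ shows only that $M_j$ carries the dense domain $W(\mathcal{W}_F)$ into $\mathcal{R}$, which is far weaker than invariance of $\mathcal{R}$ under the spectral projections of the $M_j$; for a general extension $\mu$ the subspace $\mathcal{R}$ need not reduce $\{M_j\}$ (equivalently, $\mathcal{H}_F$ need not sit invariantly inside $\mathcal{H}_{\tilde F}$), which is precisely the distinction between extensions realized inside $\mathcal{H}_F$ and those requiring a dilation to a larger space. To finish, I would show that extendability forces the existence of at least one extension whose associated subspace does reduce the generators --- building it as the minimal representation generated by $\{F_x:x\in\Omega\}$ and using connectedness of $\Omega$ to verify that translation maps this generating set consistently back into $\mathcal{H}_F$ --- and I would read off from this minimal model that the resulting representation is cyclic, hence of simple spectrum, thereby also recovering the paper's assertions that extensions correspond to representations and that each such representation has simple spectrum.
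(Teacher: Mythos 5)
Your $(\Leftarrow)$ argument is essentially the paper's own: the paper likewise proves a covariance identity $U_j(b-a)\gamma_{ae_j}=\gamma_{be_j}$ by a derivative-vanishing argument applied to the smoothed kernels $F_{a,\epsilon}$, chains coordinate segments using connectedness of $\Omega$ to get $U(a-b)\gamma_b=\gamma_a$, and then sets $G(t)=\langle\gamma_{x_0},U(t)\gamma_{x_0}\rangle$ and verifies $G=F$ on $\Omega-\Omega$ by exactly your unitarity trick. That half of your proposal is sound.

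The problem is in $(\Rightarrow)$, and it sits precisely at the point you flag as ``the main obstacle'' --- but the route you sketch for getting past it does not work. You correctly observe that the compressed measure $B\mapsto W^{\star}\chi_{B}W$ on $\mathcal{H}_F$ is projection-valued if and only if $\chi_B\mathcal{R}\subseteq\mathcal{R}$ for every Borel set $B$, i.e.\ if and only if $\mathcal{R}=\overline{W(\mathcal{W}_F)}$ reduces the multiplication operators; this is exactly the assertion of Step 1 of the paper's proof, where $E(B)$ is \emph{defined} by $\langle F_\varphi,E(B)F_\psi\rangle=\int_B\widehat{\varphi}\,\overline{\widehat{\psi}}\,d\mu$ and declared to be an orthogonal projection (with the construction deferred to Nelson and Rudin). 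Your proposed completion --- pass to ``the minimal representation generated by $\{F_x\colon x\in\Omega\}$'' --- cannot yield extensions acting on $\mathcal{H}_F$: the smallest subspace of $L^2(d\mu)$ containing $\mathcal{R}$ and invariant under $V_\mu$ contains $e_t\cdot e_{-x}=e_{t-x}$ for all $t\in\mathbb{R}^n$ and $x\in\Omega$, hence all exponentials, hence (by the paper's Lemma on surjectivity of $W$ for globally defined $G$) is all of $L^2(d\mu)\cong\mathcal{H}_G$; and the paper records examples with $\dim\mathcal{H}_F=1$ while $\dim L^2(d\mu)=\infty$. So the ``minimal model'' produces strongly commuting selfadjoint operators on a space strictly larger than $\mathcal{H}_F$ --- a dilation, which is precisely what you yourself identified as insufficient for the theorem as stated. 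To close the argument you must actually establish the projection property of $B\mapsto W^{\star}\chi_BW$ (equivalently, that $\mathcal{R}$ reduces the spectral projections for some, hence for the relevant, extension $\mu$), or find another construction of commuting selfadjoint extensions of the $S_j^F$ living on $\mathcal{H}_F$ itself. As written, your proposal acknowledges this step but does not supply it, so the forward implication remains unproved.
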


Some preparatory investigations start in section \ref{HS}, and the proof is given in section \ref{ProofMT}. 

\begin{rem} Strong commutativity means that the associated projection-valued measures commute (see section \ref{PVM}).
\end{rem}

Our proof regarding extensions of positive definite functions (Theorem \ref{MTh}) carries over mutatis mutandis to the case of \textit{conditionally negative definite functions}; see section \ref{sec83}. To limit the amount of technical detail in the paper, we have chosen to write out details in full here only for the simpler case of positive definite functions. But the applications to Gaussian processes for the case of conditionally negative definite functions are of far wider reach. In section \ref{sec83}, we note how this fact is of use in applications to harmonic analysis of Gaussian stochastic processes. While stationary Gaussian processes are naturally associated to positive definite functions, there is a much larger family of Gaussian processes whose analysis entails conditionally negative definite functions. This is the class of Gaussian processes \textit{whose increments in mean-square are stationary}. For example, \textit{fractional Brownian motion} is not stationary, but is an example of a process, whose increments in mean-square are stationary; see references \cite{AJL} and \cite{AlJo12} for details.

\subsection{Structure of extensions}

Let $\Omega$ be a non-empty open and connected subset in $\mathbb{R}^n$. Starting with a positive definite and continuous function $F$ on $\Omega -\Omega$, we introduce the compact convex set $\mathcal{K}_F$ of finite positive Borel measures $\mu$ on $\mathbb{R}^n$ as follows: a measure $\mu$ is in $\mathcal{K}_F$  if the Fourier transform of $\mu$ is an extension of $F$ (note that $\mathcal{K}_F$ may be empty). Using a theorem of Nelson, we give necessary and sufficient conditions for $\mathcal{K}_F$ to be a singleton (theorem \ref{tk2} and corollary \ref{cordef}). As a consequence, we get an explicit condition which characterizes the local starting point $(\Omega, F)$ for which we have existence and uniqueness for extension of $F$ to a positive definite continuous function on $\mathbb{R}^n$. In particular, if $\mathcal{K}_F$ contains an element $\mu$ having its support compact in $\mathbb{R}^n$, then $\mathcal{K}_F$ must be a singleton, i.e., $\mathcal{K}_F =\{\mu\}$. 

\subsection{Organization of the paper} 

In section \ref{HS} we study the Hilbert space $\mathcal{H}_F$ (see also \cite{NSX13} where authors consider similar idea). We prove that the form $\langle\cdot,\cdot\rangle$ defined in \eqref{IP} is a complex inner-product on the space $\mathcal{W}_F$ and we show existence of special elements $\gamma_a\in \mathcal{H}_F$, $a\in\Omega$, that behave like Dirac delta functions (see \eqref{rep} and \eqref{reppp}).

We follow with sections, where some well-known results needed for the proof of theorem \ref{MTh} are recalled. We state the classical theorem of Bochner, which says that any positive definite function is the Fourier transform of a finite positive Borel measure (section \ref{BT}), and we gather some facts about orthogonal projection-valued measures (section \ref{PVM}).

In section \ref{PDO} we show that each operator $S_j^F$ in \eqref{pdo} is well-defined, Hermitian, and that its deficiency indices are equal. In particular, it admits selfadjoint extensions, call one such $A_j$. We study the strongly continuous one-parameter group $U_j(t)$ of unitary transformations associated to $A_j$. We show that $U_j$ acts by   translations. 

We follow with section \ref{ProofMT}, where the proof of our main result, which is theorem \ref{MTh}, is provided. 

The remaining sections cover the following topics: (1) We give an example (example \ref{connect}) showing that the assumption of connectedness in the theorem \ref{MTh} cannot be dropped. (2) We study the set of all possible extensions of a given fixed continuous positive definite function. We prove that a positive definite function defined on an open and connected subset of the real line always admits an extension (proven originally by Krein in \cite{Kn}). We give a necessary and sufficient condition for uniqueness of an extension. We also provide an example of a locally defined continuous positive definite function together with two different extensions, which are quiet different from the point of view of Bochner's theorem. (3) We show that the Hilbert space $\mathcal{H}_F$ and \textit{the reproducing kernel Hilbert space} associated to $F$ are unitarily isomorphic via explicit transforms. (4) We prove that if a positive definite function $F$ is extendable, then it is characterized by existence of associated unitary representations of $\mathbb{R}^n$. (5) We briefly talk about the connection of the extension problem with the Fuglede conjecture. (6) We state the relation of the extension problem for positive definite functions with the extension problem for stationary Gaussian stochastic processes. (7) We outline the extension of our main result (theorem \ref{MTh}) to the class of conditionally negative definite functions, which has applications to harmonic analysis of Gaussian stochastic processes, whose increments in mean-square are stationary.

We finish with a summary, where we highlight some main interconnections between theorems in the main body of our paper. We show that a comparison of two distinct extensions may be cast in the language of a \textit{scattering operator} in the sense of Lax-Phillips (\cite{LaPh}).

\section{The Hilbert space $\mathcal{H}_F$}\label{HS}

Let $\Omega\subset\mathbb{R}^n$ be an open set and let $F\colon \Omega-\Omega\to\mathbb{C}$ be a continuous positive definite function. We study families of operators in the Hilbert space $\mathcal{H}_F$ associated to $F$, see also section \ref{sec82}.
Denote by $F^e$ the extension of $F$ to $\mathbb{R}^n$ by putting $F^e=0$ in the complement of $\Omega-\Omega$. 

\begin{rem}
The reader may wonder if $F^e$ is a positive definite function. In general, it is \textit{not}. Indeed, the following fact holds true.
\begin{thm} If $F^e$ is a positive definite function, then $F^e$ is continuous.
\end{thm}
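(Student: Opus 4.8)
The plan is to reduce continuity of $F^e$ on all of $\mathbb{R}^n$ to continuity at the origin, and then propagate it everywhere using the standard positive-definiteness inequality. First I would record that the origin is an interior point of the domain of the ``old'' data: since $\Omega$ is nonempty and open, fix $x_0\in\Omega$ and a ball $B(x_0,\varepsilon)\subset\Omega$; then $B(0,\varepsilon)=B(x_0,\varepsilon)-x_0\subset\Omega-\Omega$. On this neighborhood of $0$ the function $F^e$ coincides with $F$, which is continuous by hypothesis, so $F^e$ is continuous at $0$ and $F^e(0)=F(0)\geq 0$.

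Next I would extract the key estimate from the global positive definiteness of $F^e$. For all $x,y\in\mathbb{R}^n$ I claim
\[
|F^e(x)-F^e(y)|^2 \leq 2\,F^e(0)\,\operatorname{Re}\big(F^e(0)-F^e(x-y)\big).
\]
The cleanest way to see this is through the Kolmogorov (feature-map) representation attached to the positive definite $F^e$: there is a Hilbert space and vectors $v_z$, $z\in\mathbb{R}^n$, with $F^e(z-w)=\langle v_z,v_w\rangle$ and $\|v_z\|^2=F^e(0)$. Then $F^e(x)-F^e(y)=\langle v_x-v_y,\,v_0\rangle$, while $\|v_x-v_y\|^2=2\operatorname{Re}(F^e(0)-F^e(x-y))$, so Cauchy--Schwarz yields the displayed bound at once. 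Alternatively, the same inequality follows by testing the defining condition \eqref{pdfun} (now valid on all of $\mathbb{R}^n$ because $F^e$ is positive definite) on the three points $x,y,0$ with a suitably chosen triple of coefficients and simplifying the resulting nonnegative quadratic form.

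Finally I would conclude. Fixing $y$ and letting $x\to y$ gives $x-y\to 0$, and by the continuity of $F^e$ at the origin established above, $F^e(x-y)\to F^e(0)$; hence the right-hand side of the inequality tends to $0$, forcing $F^e(x)\to F^e(y)$. Since the bound depends on $x,y$ only through $x-y$, this in fact shows $F^e$ is uniformly continuous on $\mathbb{R}^n$. The only point that needs care is the boundary $\partial(\Omega-\Omega)$, where $F^e$ passes from the values of $F$ to the value $0$; but the inequality is precisely what forbids a jump there, as it controls $|F^e(z)-F^e(z_0)|$ in terms of the modulus of continuity of $F^e$ at $0$, uniformly in the base point. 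I expect the only (mild) obstacle to be the clean derivation of the displayed inequality from positive definiteness; the remaining steps are routine.
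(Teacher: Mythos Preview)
Your argument is correct and follows the same line as the paper's own proof: both observe that $\Omega-\Omega$ is an open neighborhood of $0$ on which $F^e=F$ is continuous, and then invoke the classical fact that a positive definite function continuous at the origin is continuous everywhere. The only difference is that the paper simply cites this last fact from \cite{BCR}, whereas you supply a self-contained proof via the standard estimate $|F^e(x)-F^e(y)|^2\le 2F^e(0)\bigl(F^e(0)-\operatorname{Re}F^e(x-y)\bigr)$.
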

The theorem follows from the fact that if a globally defined positive definite function is continuous at $0$, then it is continuous everywhere (see \cite{BCR} for details) and from the fact that $\Omega-\Omega$ is an open neighborhood of $0$. Since $F^e$ is not continuous, in general, it follows that it is not positive definite. See also example \ref{connect}.
\end{rem}
Then $F_\varphi$, where $\varphi\in C^\infty_0(\Omega)$, is the convolution of $F^e$ with $\varphi$, i.e., for $x\in\Omega$
\begin{equation}\label{convo}
F_\varphi(x)=F^e\star \varphi(x)=\int_{\mathbb{R}^n}F^e(x-y)\varphi(y)dy.
\end{equation}
We note that for any $\varphi\in C^\infty_0(\Omega)$ the function $F^e\star \varphi$ is smooth and it has compact support when $\Omega$ is a bounded set.
Using the fact that $F(x-y)=\overline{F(y-x)}$ for any $x,y\in\Omega$, we obtain for $\varphi,\psi\in C^\infty_0(\Omega)$
\begin{align*}
\int_{\mathbb{R}^n} F_\varphi(x)\overline{\psi(x)}dx &=
\int_{\mathbb{R}^n}
F^e\star \varphi(y)\overline{\psi(y)}dx\\
&=\int_{\mathbb{R}^n}
\int_{\mathbb{R}^n}F^e(x-y)\varphi(y)\overline{\psi(x)}dxdy \\
& =\int_{\mathbb{R}^n}
\int_{\mathbb{R}^n}\overline{F^e(y-x)}\varphi(y)\overline{\psi(x)}dxdy \\
&=\int_{\mathbb{R}^n}\overline{F^e\star \psi (y)}\varphi(y)dy \\
&=\int_{\mathbb{R}^n} \varphi(y)\overline{F_\psi(y)}dy.
\end{align*}
We define forms $R\colon\mathcal{W}_F\times\mathcal{W}_F\to\mathbb{C}$ and $\rho\colon C^\infty_0(\Omega)\times C^\infty_0(\Omega)\to\mathbb{C}$ as follows. For $\varphi,\psi\in C^\infty_0(\Omega)$, we put
\begin{gather}\label{two}
R(F_{\varphi},F_{\psi})=\int_{\mathbb{R}^n} F_\varphi(x)\overline{\psi(x)}dx=\int_{\mathbb{R}^n} \varphi(y)\overline{F_\psi(y)}dy.\\
\label{threee}
\rho(\varphi,\psi)=\int_{\mathbb{R}^n} F_\varphi(x)\overline{\psi(x)}dx.
\end{gather}
We are ready to prove that the form $\langle \cdot,\cdot\rangle \colon \mathcal{W}_F\times\mathcal{W}_F\to\mathbb{C}$ given by \eqref{IP} is well-defined complex inner product. We divide the proof into several steps.

\begin{lem} The form $\langle \cdot,\cdot\rangle \colon \mathcal{W}_F\times\mathcal{W}_F\to\mathbb{C}$ given by \eqref{IP} is well-defined. In other words, the quantity $R(F_\varphi,F_\psi)$ defined by \eqref{two}, or equivalently $\rho(\varphi,\psi)$ defined by \eqref{threee}, is independent of choices of $\varphi$ and $\psi$ (see \eqref{ptwo} and \eqref{pthreee}).
\end{lem}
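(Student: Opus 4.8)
The plan is to leverage the two equivalent expressions for $R(F_\varphi,F_\psi)$ recorded in \eqref{two}, each of which isolates the dependence on a single slot. The subtlety the lemma addresses is that an element of $\mathcal{W}_F$ is a \emph{function} $F_\varphi$, while the map $\varphi\mapsto F_\varphi=F^e\star\varphi$ need not be injective: distinct $\varphi,\varphi'\in C_0^\infty(\Omega)$ may yield the same $F_\varphi$. Hence I must show that $R(F_\varphi,F_\psi)$ depends only on the functions $F_\varphi$ and $F_\psi$, not on the chosen representatives $\varphi,\psi$. Throughout, I note that although the integrals in \eqref{two} are written over $\mathbb{R}^n$, the compact supports of the test functions confine them to $\Omega$, so equality of two elements of $\mathcal{W}_F$ means equality of the corresponding functions on $\Omega$.

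First I would read off independence in the first slot from the left-hand formula: since the integrand $F_\varphi(x)\overline{\psi(x)}$ refers to $\varphi$ only through the function $F_\varphi$ (and $\overline{\psi}$ is supported in $\Omega$), we obtain
\begin{equation}\label{ptwo}
R(F_{\varphi'},F_\psi)=\int_{\mathbb{R}^n}F_{\varphi'}(x)\overline{\psi(x)}\,dx=\int_{\mathbb{R}^n}F_{\varphi}(x)\overline{\psi(x)}\,dx=R(F_\varphi,F_\psi)
\end{equation}
whenever $F_{\varphi'}=F_\varphi$. Symmetrically, the right-hand formula refers to $\psi$ only through $F_\psi$, which gives independence in the second slot,
\begin{equation}\label{pthreee}
R(F_\varphi,F_{\psi'})=\int_{\mathbb{R}^n}\varphi(y)\overline{F_{\psi'}(y)}\,dy=\int_{\mathbb{R}^n}\varphi(y)\overline{F_{\psi}(y)}\,dy=R(F_\varphi,F_\psi)
\end{equation}
whenever $F_{\psi'}=F_\psi$. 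Since \eqref{ptwo} holds for an arbitrary second-slot representative and \eqref{pthreee} for an arbitrary first-slot representative, chaining them (replace $\varphi$ first, then $\psi$) yields $R(F_{\varphi'},F_{\psi'})=R(F_\varphi,F_\psi)$ whenever $F_{\varphi'}=F_\varphi$ and $F_{\psi'}=F_\psi$, which is precisely the asserted well-definedness.

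I expect no genuine obstacle here: the whole argument rests on the equality of the two expressions in \eqref{two}, which was already derived in the displayed computation preceding the lemma using $F(x-y)=\overline{F(y-x)}$ together with Fubini's theorem. The only care required is the bookkeeping that each of the two integral representations suppresses the dependence on exactly one of the test functions; once that is observed, \eqref{ptwo} and \eqref{pthreee} are immediate and the dovetailing is routine.
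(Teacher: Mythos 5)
Your proposal is correct and follows essentially the same route as the paper: the paper's proof is exactly your chaining argument, written as a single string of equalities that first swaps $F_\varphi$ for $F_\alpha$ in the representation $\int F_\varphi\overline{\psi}$, then passes to the other representation $\int\alpha\overline{F_\psi}$ to swap $F_\psi$ for $F_\beta$. Your additional remarks (non-injectivity of $\varphi\mapsto F_\varphi$, the supports confining the integrals to $\Omega$) are accurate but do not change the substance.
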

\begin{proof} Take four functions
$\alpha,\beta,\varphi,\psi\in C^\infty_0(\Omega)$ and suppose that $F_\alpha=F_\varphi$ and $F_\beta=F_\psi$. We want to show that
\begin{equation}\label{ptwo}
R(F_\varphi,F_\psi)=R(F_\alpha,F_\beta),
\end{equation}
or equivalently that
\begin{equation}\label{pthreee}
\rho(\varphi,\psi)=\rho(\alpha,\beta).
\end{equation}
This will follow by \eqref{two}. Indeed,
\begin{align*}
R(F_\varphi,F_\psi) & =\int_{\mathbb{R}^n} F_\varphi(x)\overline{\psi(x)}dx \\
&=\int_{\mathbb{R}^n} F_\alpha(x)\overline{\psi(x)}dx\\
&=\int_{\mathbb{R}^n} \alpha(y)\overline{F_\psi(y)}dy \\
&=\int_{\mathbb{R}^n} \alpha(y)\overline{F_\beta(y)}dy=R(F_{\alpha},F_{\beta}). 
\end{align*}
This finishes the proof.
\end{proof}

\begin{lem} The form $\langle\cdot,\cdot\rangle$ is a complex inner-product on the space $\mathcal{W}_F$. 
\end{lem}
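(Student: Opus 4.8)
The plan is to verify the four defining properties of a complex inner product in turn, treating sesquilinearity, Hermitian symmetry, and positivity as routine consequences of the integral formula \eqref{IP}, and reserving the real work for strict definiteness. First I would note that for fixed $\varphi$ the function $F_\varphi$ depends linearly on $\varphi$, so the map $\varphi\mapsto F_\varphi$ is linear; combined with the previous lemma (well-definedness on $\mathcal{W}_F$) and the linearity of the double integral in \eqref{IP}, this gives that $\langle\cdot,\cdot\rangle$ is linear in the first slot and conjugate-linear in the second. Hermitian symmetry $\langle F_\psi,F_\varphi\rangle=\overline{\langle F_\varphi,F_\psi\rangle}$ then follows by interchanging the roles of $x$ and $y$ in \eqref{IP} and invoking the identity $F(x-y)=\overline{F(y-x)}$.

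For positive semidefiniteness I would observe that
\begin{equation*}
\langle F_\varphi,F_\varphi\rangle=\int_\Omega\int_\Omega F(y-x)\varphi(x)\overline{\varphi(y)}\,dx\,dy\geq 0
\end{equation*}
is precisely the inequality supplied by Theorem \ref{MainLemma} (in its $C_0^\infty(\Omega)$ form), so this property requires no further argument. At this stage $\langle\cdot,\cdot\rangle$ is a bona fide semi-inner-product, and in particular the Cauchy--Schwarz inequality
\begin{equation*}
|\langle F_\varphi,F_\psi\rangle|^2\leq \langle F_\varphi,F_\varphi\rangle\,\langle F_\psi,F_\psi\rangle
\end{equation*}
holds for all $\varphi,\psi\in C_0^\infty(\Omega)$.

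The main obstacle, and the only genuinely substantive point, is strict definiteness: I must show that $\langle F_\varphi,F_\varphi\rangle=0$ forces $F_\varphi=0$ as a function on $\Omega$, for otherwise $\langle\cdot,\cdot\rangle$ would only be a semi-inner-product and $\mathcal{W}_F$ would need to be passed to a quotient. Suppose $\langle F_\varphi,F_\varphi\rangle=0$. Then Cauchy--Schwarz yields $\langle F_\varphi,F_\psi\rangle=0$ for every $\psi\in C_0^\infty(\Omega)$, and by the first equality in \eqref{two} this says exactly that $\int_{\mathbb{R}^n}F_\varphi(x)\overline{\psi(x)}\,dx=0$ for all test functions $\psi$. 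Since $F_\varphi=F^e\star\varphi$ is continuous (indeed smooth) on $\Omega$ by \eqref{convo}, the fundamental lemma of the calculus of variations (du Bois-Reymond) forces $F_\varphi\equiv 0$ on $\Omega$. I expect the delicate part of writing this cleanly to be making sure the test-function pairing in \eqref{two} is invoked in the correct slot and that the continuity of $F_\varphi$ is explicitly cited, so that the separation of continuous functions by $C_0^\infty(\Omega)$ applies without appeal to almost-everywhere equivalence classes.
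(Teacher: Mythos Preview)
Your proof is correct and follows essentially the same line as the paper: both reduce strict definiteness to the Cauchy--Schwarz consequence $\langle F_\varphi,F_\psi\rangle=0$ for all $\psi$, and then use continuity of $F_\varphi$ to conclude. The only cosmetic difference is that you invoke \eqref{two} and the du Bois--Reymond lemma directly, whereas the paper packages this step as Lemma~\ref{lem1} (testing against mollifiers $\phi_{a,\epsilon}$ and letting $\epsilon\to 0$), which is precisely the standard proof of the fundamental lemma you cite.
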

\begin{proof} We need to show that $\langle\cdot,\cdot\rangle$ is positive definite in the strict sense, i.e.,
\begin{equation*}
\langle F_\varphi,F_\varphi\rangle=0\Rightarrow F_\varphi=0.
\end{equation*}
Let $F_\varphi\in \mathcal{W}_F$ be such that
$\langle F_\varphi,F_\varphi\rangle=0$. We take $\lambda\in \mathbb{C}$ and $F_\psi\in \mathcal{W}_F$, and we consider the quantity
\begin{equation*}
\langle F_\varphi+\lambda F_\psi,F_\varphi+\lambda F_\psi\rangle,
\end{equation*}
which we know is non-negative by \eqref{MainLemma}. Now we proceed as in the proof of the Cauchy-Schwarz inequality to get that
\begin{equation*}
\langle F_\varphi,F_\psi\rangle=0.
\end{equation*}
Indeed, we compute
\begin{align*}
0 & \leq \langle F_\varphi+\lambda F_\psi,F_\varphi+\lambda F_\psi\rangle \\
& =2{\rm Re}(\overline{\lambda}\langle F_\varphi,F_\psi\rangle)+|\lambda|^2\langle F_\psi,F_\psi\rangle.
\end{align*}
Taking $\Theta\in [0,2\pi)$ such that $e^{-i\Theta}\langle F_\varphi,F_\psi\rangle\geq 0$ and putting $\lambda=te^{i\Theta}$ for $t\in\mathbb{R}$ we get that
\begin{equation*}
0\leq 2t|\langle F_\varphi,F_\psi\rangle|+t^2\langle F_\psi,F_\psi\rangle.
\end{equation*}
Therefore, since $t\in\mathbb{R}$ is arbitrary, we conclude that $\langle F_\varphi,F_\psi\rangle=0$. 
The fact that $F_\varphi=0$ follows from the lemma below, which is a simple consequence of the continuity of $F$.
\end{proof}

We denote by $(\phi_\epsilon)$ a smoothing kernel, i.e,  we take a smooth non-negative function $\phi$ with support contained in the unit ball $B_1(0)=\{x\in\mathbb{R}^n\colon |x|<1\}$ satisfying $\int\phi=1$, and we define $\phi_\epsilon(x)=1/\epsilon^n\phi(x/\epsilon)$ for $x\in\mathbb{R}^n$. Moreover, for any $a\in\mathbb{R}^n$ we put $\phi_{a,\epsilon}(x)=\phi_\epsilon(x-a)$, where $x\in\mathbb{R}^n$. 
 
\begin{lem}\label{lem1} Put $F_{a,\epsilon}=F_{\phi_{a,\epsilon}}$. For any $a\in \Omega$, we have
\begin{equation}\label{arbiter}
\langle F_\varphi,F_{a,\epsilon}\rangle\xrightarrow[\epsilon \to 0]{} F_\varphi(a).
\end{equation}
\end{lem}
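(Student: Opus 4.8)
The plan is to reduce the inner product $\langle F_\varphi, F_{a,\epsilon}\rangle$ to a single integral against the mollifier and then invoke the standard approximate-identity argument. First I would use the representation \eqref{two} established above: since $F_{a,\epsilon}=F_{\phi_{a,\epsilon}}$, the inner product $\langle F_\varphi, F_{a,\epsilon}\rangle$ equals $R(F_\varphi, F_{\phi_{a,\epsilon}})=\int_{\mathbb{R}^n}F_\varphi(x)\overline{\phi_{a,\epsilon}(x)}\,dx$. Because $\phi$ is a real, non-negative smoothing kernel, $\phi_{a,\epsilon}$ is real-valued, so the conjugation is harmless and we obtain
\begin{equation*}
\langle F_\varphi, F_{a,\epsilon}\rangle=\int_{\mathbb{R}^n}F_\varphi(x)\,\phi_\epsilon(x-a)\,dx.
\end{equation*}

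Before pushing further I would record a small well-posedness point: for $F_{a,\epsilon}$ to be a legitimate element of $\mathcal{W}_F$ we need $\phi_{a,\epsilon}\in C^\infty_0(\Omega)$, i.e. $B_\epsilon(a)\subset\Omega$. Since $\Omega$ is open and $a\in\Omega$, this holds for all sufficiently small $\epsilon$, which is all that matters for the limit $\epsilon\to 0$. For such $\epsilon$ the displayed integral is the convolution of the continuous function $F_\varphi$ with the approximate identity $\phi_\epsilon$ evaluated at $a$.

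The remaining step is the classical mollifier convergence. Changing variables $x=a+\epsilon y$ gives $\int_{\mathbb{R}^n}F_\varphi(a+\epsilon y)\,\phi(y)\,dy$, and since $F_\varphi$ is continuous (indeed smooth, being $F^e\star\varphi$) while $\phi$ is supported in the unit ball with $\int\phi=1$, letting $\epsilon\to 0$ and using continuity of $F_\varphi$ at $a$ together with dominated convergence yields $F_\varphi(a)\int_{\mathbb{R}^n}\phi(y)\,dy=F_\varphi(a)$. This establishes \eqref{arbiter}.

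There is no genuine obstacle here; the content of the lemma is entirely in the identification of the inner product with the mollification integral, which is where the representation \eqref{two} does the work. The only point requiring a word of care is the interior condition $a\in\Omega$ guaranteeing $\phi_{a,\epsilon}\in C^\infty_0(\Omega)$ for small $\epsilon$, after which the convergence is the standard property of approximate identities acting on a continuous function.
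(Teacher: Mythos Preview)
Your argument is correct and is precisely the natural proof the paper has in mind: the paper does not spell out a proof of this lemma but merely remarks that it is ``a simple consequence of the continuity of $F$,'' and your reduction via \eqref{two} to $\int F_\varphi(x)\phi_\epsilon(x-a)\,dx$ followed by the standard approximate-identity argument is exactly that. Your added care about $B_\epsilon(a)\subset\Omega$ for small $\epsilon$ is a welcome clarification.
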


The Hilbert space $\mathcal{H}_F$, as the completion of the space $(\mathcal{W}_F,\langle\cdot,\cdot\rangle)$, consists of equivalence classes of Cauchy sequences of elements from $\mathcal{W}_F$, where the equivalence relation is given by
\begin{equation*}
(F_{\varphi_n})\sim (F_{\psi_n}) \Leftrightarrow \Vert F_{\varphi_n}-F_{\psi_n}\Vert\to 0,
\end{equation*}
where $(F_{\varphi_n})$ and $(F_{\psi_n})$ are Cauchy sequences .
The inner product in $\mathcal{H}_F$ is given by
\begin{equation*}
\langle [(F_{\varphi_n})]_\sim,[(F_{\psi_n})]_\sim\rangle=\lim_{n\to\infty}\langle F_{\varphi_n},F_{\psi_n}\rangle.
\end{equation*}
Moreover, the space $\mathcal{W}_F$ is a dense subset of $\mathcal{H}_F$ under the identification of an element $F_\varphi\in\mathcal{W}_F$ with the constant Cauchy sequence $(F_\varphi)$. Again, by continuity of $F$, we obtain. 

\begin{lem}\label{lem2} For any $a\in\Omega$, the sequence $(F_{a,1/n})$ is Cauchy in $\mathcal{W}_F$. We denote $\gamma_a=[(F_{a,1/n})]_\sim\in\mathcal{H}_F$.
\end{lem}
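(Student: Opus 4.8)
The plan is to verify the Cauchy property directly by expanding the squared norm $\Vert F_{a,1/n}-F_{a,1/m}\Vert^2$ into four inner products and showing that each of them converges to the real number $F(0)$ as $n,m\to\infty$. First I would fix $a\in\Omega$ and, using that $\Omega$ is open, choose $n_0$ so large that $\overline{B_{1/n_0}(a)}\subset\Omega$; then for every $n\geq n_0$ the mollifier $\phi_{a,1/n}$ belongs to $C_0^\infty(\Omega)$, so $F_{a,1/n}\in\mathcal{W}_F$ and it suffices to establish the Cauchy condition along the tail $n\geq n_0$.

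The key computation is the inner product of two such elements. For $\epsilon,\delta>0$ small enough that the mollifiers are supported in $\Omega-a$, I would apply the defining formula \eqref{IP} with $\varphi=\phi_{a,\epsilon}$ and $\psi=\phi_{a,\delta}$, together with the change of variables $x=a+u$, $y=a+v$ and the fact that $\phi$ is real-valued, to obtain
\begin{equation*}
\langle F_{a,\epsilon},F_{a,\delta}\rangle=\int_{\mathbb{R}^n}\int_{\mathbb{R}^n}F(v-u)\,\phi_\epsilon(u)\phi_\delta(v)\,du\,dv.
\end{equation*}
Since $\phi_\epsilon,\phi_\delta\geq 0$ have total mass one and are supported in balls of radius $\epsilon$ and $\delta$ about the origin, for any $u,v$ in those supports one has $|v-u|<\epsilon+\delta$, whence
\begin{equation*}
\bigl|\langle F_{a,\epsilon},F_{a,\delta}\rangle-F(0)\bigr|\leq\int\!\!\int|F(v-u)-F(0)|\,\phi_\epsilon(u)\phi_\delta(v)\,du\,dv\leq\sup_{|z|<\epsilon+\delta}|F(z)-F(0)|.
\end{equation*}
By continuity of $F$ at the origin this bound tends to $0$ jointly as $(\epsilon,\delta)\to(0,0)$; in particular $\langle F_{a,1/n},F_{a,1/m}\rangle\to F(0)$ whenever $n,m\to\infty$.

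Finally I would expand
\begin{equation*}
\Vert F_{a,1/n}-F_{a,1/m}\Vert^2=\langle F_{a,1/n},F_{a,1/n}\rangle-\langle F_{a,1/n},F_{a,1/m}\rangle-\langle F_{a,1/m},F_{a,1/n}\rangle+\langle F_{a,1/m},F_{a,1/m}\rangle,
\end{equation*}
and observe that each of the four terms converges to the real value $F(0)$ as $n,m\to\infty$, so the right-hand side tends to $F(0)-F(0)-F(0)+F(0)=0$. Concretely, given $\eta>0$ I would pick $N$ so that every inner product above lies within $\eta/4$ of $F(0)$ for $n,m\geq N$, making the sum less than $\eta$; this proves $(F_{a,1/n})_{n\geq n_0}$ is Cauchy, and $\gamma_a$ is defined as its $\sim$-equivalence class in $\mathcal{H}_F$.

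The only point requiring care is that the limit $\langle F_{a,\epsilon},F_{a,\delta}\rangle\to F(0)$ must be taken \emph{jointly} in $(\epsilon,\delta)$ rather than iteratively: this is precisely what makes the four-term telescoping collapse to zero uniformly for large $n,m$. One could instead invoke Lemma \ref{lem1} with $\varphi=\phi_{a,\delta}$, but that yields only the iterated limit and additionally forces one to check that $F_{a,\delta}(a)\to F(0)$, so the direct estimate above is the more economical route and isolates the single analytic input, namely the continuity of $F$ at $0$.
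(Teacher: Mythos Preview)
Your proof is correct and makes explicit precisely the argument the paper only gestures at: the paper states the lemma with the single remark ``Again, by continuity of $F$, we obtain'' and gives no further details. Your direct joint estimate $|\langle F_{a,\epsilon},F_{a,\delta}\rangle-F(0)|\leq\sup_{|z|<\epsilon+\delta}|F(z)-F(0)|$ is exactly the computation that justifies this, so the approach is the same, just fully written out.
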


Combining lemma \ref{lem1} and lemma \ref{lem2} we get that, for any $F_\varphi\in\mathcal{W}_F$ and any $a\in \Omega$,
\begin{equation}\label{rep}
F_\varphi(a)=\langle F_\varphi,\gamma_a\rangle.
\end{equation}
Therefore the $\textrm{span}\,\{\gamma_a\colon a\in\Omega\}$ is a dense subspace of $\mathcal{H}_F$. We also get the following identity
\begin{equation}\label{reppp}
\langle \gamma_b,\gamma_a\rangle=F(a-b),
\end{equation}
where $a,b\in\Omega$. See section \ref{sec82}, where we talk about the reproducing kernel Hilbert space associated to a given positive definite function. 

Starting with a partially defined continuous positive definite function $F$ on a subset  $\Omega-\Omega$ of $\mathbb{R}^n$, as in Theorem \ref{MainLemma}, we then arrive at the reproducing kernel Hilbert space $\mathcal{H}_F$.  Now, this Hilbert space $\mathcal{H}_F$ is defined as a completion, and so by its very nature, it is rather abstract; see our Lemmas 4--7 above.  Nonetheless, $\mathcal{H}_F$ has  a concrete realization as well. It is possible to describe the RKHS  $\mathcal{H}_F$ as a Hilbert space of a definite class of \textit{continuous functions} on $\Omega$. For the details of this, see section \ref{sec82} (in particular, Remark 11 and Lemma 19) where we will need this alternative realization of $\mathcal{H}_F$. 

\section{Positive definite functions on $\mathbb{R}^n$}\label{BT}

We start with the following definitions.
\begin{definition}\label{defi}
\begin{enumerate}
\item[(i)] A positive definite kernel $K$ on $\mathbb{R}^n\times\mathbb{R}^n$, see \eqref{pdkernel}, is said to be \textit{stationary} if there is a function $F\colon\mathbb{R}^n\to\mathbb{C}$ such that 
\begin{equation}
F(x-y)=K(x,y),\quad x,y\in\mathbb{R}^n.
\end{equation}
In other words, a positive definite kernel is stationary if it is given by a positive definite function.
\item[(ii)] A positive definite kernel $K$ on $\mathbb{R}^n\times\mathbb{R}^n$ is said to be \textit{stationary-increment} if there is a function $G\colon\mathbb{R}^n\to\mathbb{C}$ such that
\begin{equation}
G(x-y)=K(x,x)+K(y,y)-2\textrm{Re}\,K(x,y),\quad x,y\in\mathbb{R}^n.
\end{equation} 
In this case, we say that $G$ is a \textit{conditionally negative definite function} defined on $\mathbb{R}^n$. See section \ref{sec83} for additional details.
\item[(iii)] Following \cite{Kle77}, we say that a positive definite function $F\colon\mathbb{R}\to\mathbb{R}$ is \textit{reflection positive} if the kernel
\begin{equation*}
K_{\textrm{ref}}(x,y)=F(x+y),\quad x,y\in [0,\infty),
\end{equation*}
is positive definite.
\end{enumerate}
The function $F(x)=e^{-|x|}$, $x\in\mathbb{R}$, in example \ref{enegx} is reflection positive; it is the covariance function for the Ornstein-Uhlenbeck process (\cite{Kle77}).
\end{definition}
\begin{rem}
Note that if a positive definite kernel $K$ on $\mathbb{R}^n\times\mathbb{R}^n$ is given, then the condition in (i) is more restrictive than the condition in (ii). In other words, every stationary kernel is also stationary-increment.
\end{rem}
Our main theorem (theorem \ref{MTh}) takes on an added significance in view of the theorem of Bochner, characterizing the Fourier transform of finite Borel measures. Specifically, let $M(\mathbb{R}^n)$ denote the set of all finite positive Borel measures on $\mathbb{R}^n$. For a measure $\mu\in M(\mathbb{R}^n)$ we define its Fourier transform $\widehat{\mu}\colon \mathbb{R}^n\to\mathbb{C}$ to be the function
\begin{equation*}
\widehat{\mu}(t)=\int_{\mathbb{R}^n}e^{i t\cdot x}d\mu(x),
\end{equation*}
where $t \cdot x$ denotes the standard inner product on $\mathbb{R}^n$ of $x$ and $t$. The following holds.
\begin{thm} (Bochner) Let $F\colon \mathbb{R}^n\to\mathbb{C}$ be a continuous function. Then $F$ is positive definite if and only if there exists a positive finite Borel measure $\mu\in M(\mathbb{R}^n)$ such that $F=\hat{\mu}$.
\end{thm}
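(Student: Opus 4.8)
The plan is to treat the two implications separately, with the forward (sufficiency) direction being a direct computation and the converse being the substantive part.

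For the easy direction, suppose $F=\widehat{\mu}$ for some $\mu\in M(\mathbb{R}^n)$. Then for $x_1,\ldots,x_m\in\mathbb{R}^n$ and $c_1,\ldots,c_m\in\mathbb{C}$,
\[
\sum_{j,k=1}^m F(x_j-x_k)c_j\overline{c_k}=\int_{\mathbb{R}^n}\Big|\sum_{j=1}^m c_j e^{ix_j\cdot t}\Big|^2\,d\mu(t)\geq 0,
\]
so $F$ is positive definite, and continuity of $F$ follows from dominated convergence using finiteness of $\mu$.

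For the converse I would reuse the Hilbert space machinery of section \ref{HS} with $\Omega=\mathbb{R}^n$ (so that $\Omega-\Omega=\mathbb{R}^n$ is open and connected). By \eqref{reppp} the vectors $\gamma_a\in\mathcal{H}_F$, $a\in\mathbb{R}^n$, satisfy $\langle\gamma_b,\gamma_a\rangle=F(a-b)$, and their span is dense. I would define translation operators $U(a)\gamma_b=\gamma_{a+b}$; a short computation gives $\langle U(a)\gamma_b,U(a)\gamma_c\rangle=F(c-b)=\langle\gamma_b,\gamma_c\rangle$, so each $U(a)$ extends to a unitary on $\mathcal{H}_F$ and $a\mapsto U(a)$ is a homomorphism into the unitary group. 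Strong continuity reduces, by density, to continuity of $a\mapsto U(a)\gamma_0$ at the origin, which follows from $\Vert U(a)\gamma_0-\gamma_0\Vert^2=2F(0)-2\,\mathrm{Re}\,F(a)\to 0$ by continuity of $F$. Thus $U$ is a strongly continuous unitary representation of $\mathbb{R}^n$ with cyclic vector $\gamma_0$.

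The crux is then to invoke the multivariate Stone theorem (the SNAG theorem), which furnishes a unique projection-valued measure $E$ on the Borel sets of $\mathbb{R}^n$ with $U(a)=\int_{\mathbb{R}^n}e^{ia\cdot t}\,dE(t)$; the requisite facts about projection-valued measures are collected in section \ref{PVM}. Setting $\mu(\,\cdot\,)=\langle\gamma_0,E(\,\cdot\,)\gamma_0\rangle=\Vert E(\,\cdot\,)\gamma_0\Vert^2$ yields a positive Borel measure with total mass $\mu(\mathbb{R}^n)=\Vert\gamma_0\Vert^2=F(0)<\infty$, so $\mu\in M(\mathbb{R}^n)$. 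Finally, using $\gamma_a=U(a)\gamma_0$ and \eqref{reppp},
\[
F(a)=\langle\gamma_0,\gamma_a\rangle=\langle\gamma_0,U(a)\gamma_0\rangle=\int_{\mathbb{R}^n}e^{ia\cdot t}\,d\mu(t)=\widehat{\mu}(a),
\]
which is the desired representation. I expect the only real obstacle to be the SNAG theorem itself: since it is essentially a spectral-theoretic repackaging of Bochner's theorem, one must ensure the cited version is derived from Stone's theorem and the spectral theorem rather than from Bochner, to avoid circularity. An alternative, fully classical route avoids SNAG by regularizing with Gaussians, $F_\lambda=F\cdot e^{-\lambda|\cdot|^2}$: each $F_\lambda$ is continuous, positive definite (a product of positive definite functions), and integrable, so the $L^1$ inversion gives $F_\lambda=\widehat{\mu_\lambda}$ with $\mu_\lambda\geq 0$ and $\mu_\lambda(\mathbb{R}^n)=F(0)$, and one passes to a weak-$*$ limit as $\lambda\to 0$; there the main obstacle is a tightness argument, extracted from continuity of $F$ at $0$, showing that no mass escapes to infinity.
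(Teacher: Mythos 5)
The paper does not actually prove this statement---it defers to Katznelson \cite{K}---so there is no internal proof to match; judged on its own, your argument is correct. Your main route is the GNS/RKHS construction: build $\mathcal{H}_F$ (or $H_F$) with the kernel vectors $\gamma_a$ satisfying $\langle\gamma_b,\gamma_a\rangle=F(a-b)$, realize translation as a strongly continuous unitary representation of $\mathbb{R}^n$ with cyclic vector $\gamma_0$, and read off $\mu$ from the spectral measure; all the individual steps (well-definedness and unitarity of $U(a)$ from the invariance of the Gram matrix, strong continuity from $\Vert U(a)\gamma_0-\gamma_0\Vert^2=2F(0)-2\,\mathrm{Re}\,F(a)$, cyclicity from density of $\mathrm{span}\,\{\gamma_a\}$) check out. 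This is a genuinely different proof from the classical Fourier-analytic one in the cited reference, and it has the virtue of being exactly the global ($\Omega=\mathbb{R}^n$) specialization of the machinery the paper itself builds in sections \ref{HS}, \ref{PVM}, and \ref{ProofMT}; in that case the operators $S_j^F$ are automatically essentially selfadjoint with strongly commuting closures, so no extension issue arises. The one point you rightly flag is the potential circularity in invoking SNAG: the usual proofs of SNAG for general LCA groups do pass through Bochner, but for $\mathbb{R}^n$ one gets it honestly from Stone's theorem applied to each coordinate together with the joint spectral measure of the $n$ commuting selfadjoint generators (this is precisely the content of Lemma \ref{opvm}), so the argument is non-circular provided you cite that spectral-theoretic version. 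Your fallback route via Gaussian regularization, $L^1$ inversion, and a tightness argument is the standard classical proof and is also sound in outline, though the steps you compress (that a continuous integrable positive definite function has a nonnegative integrable inverse Fourier transform, and that no mass escapes to infinity in the weak-$*$ limit) each require a short additional lemma.
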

For a proof of we refer the reader to \cite{K}.

\begin{rem} The case of positive definite tempered distributions can be viewed as a generalization of the theorem of Bochner (see \cite{AJL}). Let $T\in S'$ be a tempered distribution. We say that $T$ is positive definite if there exists a positive Borel measure $\nu$ on $\mathbb{R}^n$ satisfying
\begin{equation*}
\int_{\mathbb{R}^n}\frac{d\nu(x)}{(1+|x|^2)^p}<\infty
\end{equation*}
for some $p\in\mathbb{N}$, such that for any test function $\varphi\in S$
\begin{equation*}
T(\varphi)=\int_{\mathbb{R}^n} \widehat{\varphi}(x) d\nu(x),
\end{equation*}
where $\mathcal{S}$ denotes the Schwartz space of rapidly decreasing functions.
\end{rem}

\section{Projection-valued measures}\label{PVM}

Let $(X,\mathcal{B})$ be a measurable space. Let $H$ be a Hilbert space, and let $\textrm{Proj}(H)$ be the set of \textit{orthogonal projections} $P$ on $H$, i.e., $P=P^2=P^\star$. A mapping $E\colon \mathcal{B}\to \textrm{Proj}(H)$ is said to be an \textit{orthogonal projection-valued measure}, often called a \textit{spectral measure}, if for any $f\in H$, the function $\mu_f\colon \mathcal{B}\to [0,\infty)$ given by
\begin{equation*}
\mu_f(\Delta)=\Vert E(\Delta)f\Vert^2=\langle E(\Delta)f,f\rangle=\langle f,E(\Delta)f\rangle,
\end{equation*}
where $E(\Delta)=E(\Delta)^2=E(\Delta)^\star$, is a finite positive measure with
\begin{equation*}
\mu_f(X)=\int_X d\mu_f=\Vert f\Vert^2.
\end{equation*}

\begin{lem}\label{opvm}
Let $H$ be a Hilbert space, let $n>1$, and let $\{A_j\}_{j=1}^n$ be a system of selfadjoint operators (generally unbounded), each with dense domain in $H$, and let $E_j$ be the orthogonal projection-valued measure associated to $A_j$. The measure $E_j$ is defined on the $\sigma$-algebra of Borel sets of the real line, and it recovers the operator $A_j$ via the formula 
\begin{equation*}
A_j=\int_{\mathbb{R}}x\,dE_j(x).
\end{equation*} 
We have that the following conditions are equivalent :
\begin{enumerate}
\item The orthogonal projection-valued measures $E_1,
\ldots,E_n$ commute, i.e., 
\begin{equation*}
E_i(\Delta_1)E_j(\Delta_2)=E_j(\Delta_2)E_i(\Delta_1)
\end{equation*}
for any 
Borel sets $\Delta_1,\Delta_2$ and any $i,j=1,\ldots,n$.
\item For any $z_1,\ldots,z_n\in\mathbb{C}\setminus\mathbb{R}$ the system of bounded operators $\{(z_jI-A_j)^{-1}\}_{j=1}^n$ commutes.
\item For $t=(t_1,\ldots,t_n)\in\mathbb{R}^n$, put 
\begin{align*}
U(t)& =e^{it_1A_1}e^{it_2A_2}\cdot\ldots\cdot e^{it_nA_n} \\
& =\prod_{j=1}^n\int_{\mathbb{R}}e^{it_jx_j}dE_j(x_j).
\end{align*}
Then $\{U(t)\}_{t\in\mathbb{R}^n}$ is a strongly continuous unitary representation of the group $(\mathbb{R}^n,+)$ on $H$.
\item There is an orthogonal projection-valued measure $E$ defined on the Borel $\sigma$-algebra $\mathcal{B}_n=\mathcal{B}_{\mathbb{R}^n}$ such that for all sets $\Delta\in\mathcal{B}_1$
\begin{equation*}
E_j(\Delta)=E(\mathbb{R}\times \ldots\times \Delta\times\ldots\times \mathbb{R}),\quad (\Delta \textrm{ in the $j$-th place}).
\end{equation*}  
If any of the above equivalent conditions is satisfied, then we say that the system $\{A_j\}_{j=1}^n$ of selfadjoint operators strongly commutes, or that the operators $A_1,\ldots,A_n$ strongly commute.
\end{enumerate}
\end{lem}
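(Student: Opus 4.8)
The plan is to route every implication through a single commutation principle for one selfadjoint operator against a bounded one, and then to build the joint spectral measure by hand. The lemma I would isolate first is this: if $A$ is selfadjoint with spectral measure $E_A$, resolvent $R_z=(zI-A)^{-1}$, and unitary group $V_t=e^{itA}$, and if $T$ is a bounded operator on $H$, then the three statements ``$T$ commutes with every $E_A(\Delta)$'', ``$T$ commutes with $R_z$ for all $z\in\mathbb{C}\setminus\mathbb{R}$'', and ``$T$ commutes with $V_t$ for all $t\in\mathbb{R}$'' are equivalent. This is pure functional calculus. Since $R_z=\int_{\mathbb{R}}(z-x)^{-1}dE_A(x)$ and $V_t=\int_{\mathbb{R}}e^{itx}dE_A(x)$ are norm (resp. strong) limits of finite linear combinations of the projections $E_A(\Delta)$, commutation with all $E_A(\Delta)$ forces commutation with $R_z$ and with $V_t$. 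Conversely, the projections are recovered from the resolvents by Stone's formula---the strong limit, as $\varepsilon\downarrow0$ and over shrinking subintervals, of $\frac{1}{2\pi i}\int_a^b(R_{x-i\varepsilon}-R_{x+i\varepsilon})\,dx$---and from the unitaries by the analogous Fourier-inversion limit; commutation passes through these strong limits, so commuting with all $R_z$ (or all $V_t$) forces commuting with all $E_A(\Delta)$.

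Granting this lemma I would take condition $(1)$ as the hub. The implication $(4)\Rightarrow(1)$ is immediate: $E_i(\Delta_1)$ and $E_j(\Delta_2)$ are values of the single measure $E$ on two rectangles, and since values of a projection-valued measure satisfy $E(S)E(T)=E(S\cap T)=E(T)E(S)$, they commute. For $(1)\Leftrightarrow(2)$ I apply the lemma twice. Assuming $(1)$, fix $i\neq j$ and put $T=E_i(\Delta_1)$; the lemma applied to $A_j$ turns commutation with all $E_j(\Delta_2)$ into commutation with every $R_{z_j}^{(j)}$, and then the lemma applied to $A_i$ with $T=R_{z_j}^{(j)}$ turns commutation with all $E_i(\Delta_1)$ into commutation with every $R_{z_i}^{(i)}$, giving $(2)$; the reverse runs the ``converse'' halves of the lemma in the same two steps. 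The equivalence $(1)\Leftrightarrow(3)$ is identical with $V_t$ in place of $R_z$, yielding pairwise commutation $e^{it_iA_i}e^{is_jA_j}=e^{is_jA_j}e^{it_iA_i}$; this pairwise commutation is exactly what lets one reorder the factors in $U(t)U(s)$ to obtain $U(t+s)$, so $U$ is multiplicative, and strong continuity of $U$ follows from that of each factor. For the reverse direction of $(3)$ one evaluates the representation property on single-coordinate arguments, $U(te_i)U(se_j)=U(te_i+se_j)=U(se_j)U(te_i)$, to extract pairwise group commutativity, and then invokes the converse half of the lemma to reach $(1)$.

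The main obstacle is $(1)\Rightarrow(4)$, the construction of the joint projection-valued measure. Assuming $(1)$, I would define $E$ on measurable rectangles by $E(\Delta_1\times\cdots\times\Delta_n)=E_1(\Delta_1)E_2(\Delta_2)\cdots E_n(\Delta_n)$; because the factors mutually commute, this product is again an orthogonal projection, and it is finitely additive and multiplicative on the semiring of rectangles. To extend $E$ to all of $\mathcal{B}_{\mathbb{R}^n}$ I pass to scalar measures: for each $f\in H$ the set function $\mu_f(\Delta_1\times\cdots\times\Delta_n)=\langle E_1(\Delta_1)\cdots E_n(\Delta_n)f,f\rangle$ is nonnegative (being $\Vert E(\Delta_1\times\cdots\times\Delta_n)f\Vert^2$) and additive, with total mass $\Vert f\Vert^2$, so by Carath\'eodory it extends uniquely to a finite Borel measure on $\mathbb{R}^n$; polarization yields complex measures $\mu_{f,g}$. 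Defining $\langle E(\Delta)f,g\rangle=\mu_{f,g}(\Delta)$ for general Borel $\Delta$, one checks sesquilinearity and boundedness to produce a bounded operator $E(\Delta)$, then verifies $E(\Delta)=E(\Delta)^2=E(\Delta)^\star$ and strong countable additivity, and finally reads off the marginals $E(\mathbb{R}\times\cdots\times\Delta\times\cdots\times\mathbb{R})=E_j(\Delta)$ from the rectangle formula with the remaining factors equal to $E_i(\mathbb{R})=I$. The delicate point---and where I expect the real work to lie---is propagating the projection identity $E(\Delta)^2=E(\Delta)$ from rectangles to all Borel sets; I would do this by a monotone-class/Dynkin argument anchored on the uniqueness of the scalar extensions $\mu_{f,g}$, so that the algebraic relations verified on the generating rectangles survive the passage to the full $\sigma$-algebra.
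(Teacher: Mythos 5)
Your proposal is correct in outline, but it is worth saying up front that the paper does not prove this lemma at all: its ``proof'' is a one-line citation to Jorgensen, Jorgensen--Tian, and Schm\"udgen, where this equivalence is standard. So any comparison is between your argument and the textbook proofs it implicitly references, and on that score your route is exactly the standard one: a single commutation principle (a bounded operator commutes with all spectral projections of a selfadjoint $A$ iff with all resolvents iff with all $e^{itA}$, via functional calculus one way and Stone's formula or the Laplace-transform representation of the resolvent the other way), applied twice to get $(1)\Leftrightarrow(2)\Leftrightarrow(3)$, with $(4)\Rightarrow(1)$ trivial and $(1)\Rightarrow(4)$ the construction of the product spectral measure. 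Your extraction of pairwise commutativity of the unitary groups from the representation property of $U$, by evaluating on $te_i$ and $se_j$, is the right way to close the loop from $(3)$ back to $(1)$.

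Two steps deserve more care than your sketch gives them, though neither is a fatal gap. First, in the Carath\'eodory step you only assert that $\mu_f$ is \emph{finitely} additive on rectangles; the extension theorem requires $\sigma$-additivity on the generating semiring, and for products of commuting spectral measures this is a genuine (if standard) argument, usually run through inner regularity of the finite Borel measures $\Delta\mapsto\langle E_j(\Delta)g,h\rangle$ on $\mathbb{R}$. Second, you correctly flag the propagation of $E(\Delta)^2=E(\Delta)$ and $E(S)E(T)=E(S\cap T)$ from rectangles to all Borel sets as the delicate point; the clean way is a two-stage monotone class argument on the weak matrix coefficients, fixing one argument in the algebra of finite unions of rectangles and varying the other, then reversing roles, after which weak countable additivity upgrades to strong countable additivity because the values are projections. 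With those two points filled in, your proof is complete and self-contained, which is more than the paper offers.
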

\begin{proof} See \cite{Jor1},\cite{Jor2},\cite{JT}, \cite{Sch}.
\end{proof}

\section{The operators $S_j^F$}\label{PDO}

For many instances in mathematical physics (see e.g., \cite{Fug74}, \cite{GJ}, \cite{Jor2}, \cite{Nel}), one considers systems of \textit{commuting Hermitian operators} defined on a common dense domain in a fixed Hilbert space. The following problem is an important part of the theory of unbounded linear operators: If each Hermitian operator in the system has selfadjoint extensions, it is natural to ask if, among the individual selefadjoint extensions, we may find a commutative system. It turns out that commuting selfadjoint extensions may not always exist. Our problem about extendability of a fixed locally defined positive definite function $(F, \Omega)$  turns out to be closely connected to existence of commuting selfadjoint extensions for a particular system of commuting Hermitian operators associated with F.  The resolution of the question is an important step in our proof of Theorem \ref{MTh}, and we now turn to the details of this.

\begin{lem} Let $F\colon\Omega-\Omega\to\mathbb{C}$ be a continuous and positive definite function, where $\Omega\subset\mathbb{R}^n$ is an open set.

(i) For any $\varphi,\psi\in C^\infty_0(\Omega)$ the following hold true
\begin{equation}\label{welldefinedoperators}
\langle F_{\frac{\partial \varphi}{\partial x_j}},F_\psi\rangle+\langle F_\varphi,F_{\frac{\partial \psi}{\partial x_j}}\rangle=0,
\end{equation}
where the inner-product is given by \eqref{IP}.

(ii) The operators $S_j^F\colon\mathcal{W}_F\to\mathcal{W}_F$, $j=1,\ldots,n$, from \eqref{pdo} are well-defined.

(iii) The commuting operators $S_j^F\colon\mathcal{W}_F\to\mathcal{W}_F$, $j=1,\ldots,n$, from \eqref{pdo} are Hermitian, i.e., for any $F_\varphi,F_\psi\in\mathcal{W}_F$ we have the following
\begin{equation}
\langle S_j^F(F_\varphi),F_\psi\rangle=\langle F_\varphi,S_j^F(F_\psi)\rangle.
\end{equation}
In particular, $S_j^F\subset (S_j^F)^\star$.

(iv) Each operator $S_j^F\colon\mathcal{W}_F\to\mathcal{W}_F$, $j=1,\ldots,n$, has equal deficiency indices.  
\end{lem}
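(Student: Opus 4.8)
The plan is to establish the four claims in order, since each feeds into the next. For part (i), I would argue directly from the integral representation \eqref{IP} together with integration by parts. Writing out $\langle F_{\partial\varphi/\partial x_j}, F_\psi\rangle$ as a double integral $\int_\Omega\int_\Omega F(y-x)\frac{\partial\varphi}{\partial x_j}(x)\overline{\psi(y)}\,dx\,dy$, I would integrate by parts in the $x_j$ variable, using that $\varphi$ has compact support in $\Omega$ so that the boundary terms vanish. This moves the derivative onto $F(y-x)$, producing a factor $-\frac{\partial}{\partial x_j}[F(y-x)]$. Comparing this with the expression for $\langle F_\varphi, F_{\partial\psi/\partial x_j}\rangle$, where I instead integrate by parts in $y_j$, the two derivatives of $F(y-x)$ differ precisely by a sign (since $\frac{\partial}{\partial x_j}F(y-x)=-\frac{\partial}{\partial y_j}F(y-x)$), and the two terms cancel, giving \eqref{welldefinedoperators}.

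For part (ii), well-definedness of $S_j^F$ means that if $F_\varphi=F_\psi$ in $\mathcal{W}_F$, then $F_{\partial\varphi/\partial x_j}=F_{\partial\psi/\partial x_j}$, i.e. the formula \eqref{pdo} does not depend on the choice of representing test function. I would reduce this to showing that $F_\varphi=0$ in $\mathcal{H}_F$ implies $F_{\partial\varphi/\partial x_j}=0$. Using \eqref{rep}, for any $a\in\Omega$ we have $F_{\partial\varphi/\partial x_j}(a)=\langle F_{\partial\varphi/\partial x_j},\gamma_a\rangle$, and I would relate the norm of $F_{\partial\varphi/\partial x_j}$ back to that of $F_\varphi$ via the identity in (i). Concretely, applying \eqref{welldefinedoperators} with $\psi$ ranging over a dense family, the vanishing of $F_\varphi$ forces the relevant inner products to vanish; since $\mathcal{W}_F$ is dense in $\mathcal{H}_F$, this yields $F_{\partial\varphi/\partial x_j}=0$.

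Part (iii) is then essentially a restatement of (i). By definition $S_j^F(F_\varphi)=-iF_{\partial\varphi/\partial x_j}$, so
\begin{equation*}
\langle S_j^F(F_\varphi),F_\psi\rangle=-i\langle F_{\partial\varphi/\partial x_j},F_\psi\rangle,
\end{equation*}
while
\begin{equation*}
\langle F_\varphi,S_j^F(F_\psi)\rangle=\langle F_\varphi,-iF_{\partial\psi/\partial x_j}\rangle=+i\overline{\langle F_{\partial\psi/\partial x_j},F_\varphi\rangle}=i\langle F_\varphi,F_{\partial\psi/\partial x_j}\rangle.
\end{equation*}
The $-i$ from the first expression combines with \eqref{welldefinedoperators} so that the two sides agree, establishing that $S_j^F$ is Hermitian and hence $S_j^F\subset(S_j^F)^\star$. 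I would also check the formal commutativity claim $S_i^F S_j^F=S_j^F S_i^F$ on $\mathcal{W}_F$, which follows immediately from the symmetry of mixed partial derivatives $\frac{\partial^2\varphi}{\partial x_i\partial x_j}=\frac{\partial^2\varphi}{\partial x_j\partial x_i}$ applied inside \eqref{pdo}.

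Finally, for part (iv), the equality of deficiency indices $n_+=n_-$ I would deduce from the existence of a conjugation (an antiunitary involution) $J$ on $\mathcal{H}_F$ that commutes with $S_j^F$. The natural candidate is complex conjugation combined with the reflection $x\mapsto -x$: concretely, on $\mathcal{W}_F$ set $J(F_\varphi)=F_{\overline{\varphi(-\,\cdot)}}$ or a suitable variant, and verify that $J$ is antilinear, isometric (using $F(-z)=\overline{F(z)}$), and intertwines $S_j^F$ with itself. By von Neumann's theorem, a symmetric operator admitting such a conjugation commuting with it has equal deficiency indices, so selfadjoint extensions exist. I expect the \textbf{main obstacle} to be part (iv): pinning down the correct conjugation $J$ and checking carefully that it is well-defined on equivalence classes in $\mathcal{W}_F$ (using part (ii)), that it preserves the inner product \eqref{IP}, and that it genuinely commutes with each $S_j^F$ rather than anticommuting — the factor of $-i$ in \eqref{pdo} makes the sign bookkeeping delicate, and I would verify it against the reflection symmetry of $F$ before concluding.
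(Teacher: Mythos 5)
Your overall architecture matches the paper's: in (ii) you reduce well-definedness to ``$F_\varphi=0\Rightarrow F_{\partial\varphi/\partial x_j}=0$'' and derive it from (i) (the paper takes the single choice $\psi=\partial\varphi/\partial x_j$ and computes $\Vert F_{\partial\varphi/\partial x_j}\Vert^2=-\langle F_\varphi,F_{\partial^2\varphi/\partial x_j^2}\rangle=0$, while you invoke density of $\mathcal{W}_F$ --- both work); (iii) is the same bookkeeping with the factors of $i$, and your added check of formal commutativity via equality of mixed partials is correct; and (iv) invokes von Neumann's conjugation criterion, exactly as the paper does (the paper only cites \cite{Jor2}, \cite{Sch} at that point).

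The one genuine gap is in your proof of (i). You integrate by parts in $x_j$ so that the derivative lands on $F(y-x)$, ``producing a factor $-\frac{\partial}{\partial x_j}[F(y-x)]$.'' But $F$ is only assumed continuous, so this derivative need not exist, and the subsequent cancellation against $-\frac{\partial}{\partial y_j}[F(y-x)]$ is not available as stated. The paper avoids differentiating $F$ altogether: using the equivalent single-integral form \eqref{two} of the inner product and the identity $F^e\star\frac{\partial\varphi}{\partial x_j}=\frac{\partial}{\partial x_j}\left(F^e\star\varphi\right)$ (valid because $\varphi$ is smooth with compact support), one writes $\langle F_{\partial\varphi/\partial x_j},F_\psi\rangle=\int\frac{\partial}{\partial x_j}(F^e\star\varphi)(x)\,\overline{\psi(x)}\,dx$ and integrates by parts between the two smooth functions $F^e\star\varphi$ and $\psi$, so no derivative ever falls on $F$. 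Alternatively, your cancellation can be salvaged by noting that $\partial_{x_j}+\partial_{y_j}$ annihilates $F(y-x)$ weakly: after the change of variables $u=y-x$, $v=x$, the sum of your two expressions becomes $\int F(u)\left(\int\partial_{v_j}\chi(u,v)\,dv\right)du=0$. Either repair closes the gap. A secondary caution on (iv): your candidate $J(F_\varphi)=F_{\overline{\varphi(-\,\cdot)}}$ does not make sense for general $\Omega$, since $\overline{\varphi(-\,\cdot)}$ is supported in $-\Omega$, which need not meet $\Omega$; a point reflection $x\mapsto c-x$ only yields a conjugation when it maps $\Omega$ onto itself, which is exactly the situation exploited in corollary \ref{corkrein} for an interval. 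You rightly flag (iv) as the delicate step, and the paper itself defers the construction to the references, but ``a suitable variant'' is doing real work there.
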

\begin{proof} (i) We compute using the integration by parts formula
\begin{align*}
\langle F_{\frac{\partial \varphi}{\partial x_j}},F_\psi\rangle &=\int_{\mathbb{R}^n}
\left(F^e\star \frac{\partial\varphi}{\partial x_j}(x)\right)\overline{\psi(x)}dx
\\ & =\int_{\mathbb{R}^n}
\frac{\partial\varphi}{\partial x_j}(F^e\star \varphi(x))\overline{\psi(x)}dx \\
&=-\int_{\mathbb{R}^n}
(F^e\star \varphi(x))\overline{\frac{\partial\psi}{\partial x_j}(x)}dx \\
& =-\langle F_\varphi,F_{\frac{\partial \psi}{\partial x_j}}\rangle.
\end{align*}
Hence (i) follows. 

(ii) To prove that the operators $S_j^F\colon\mathcal{W}_F\to\mathcal{W}_F$ are well-defined we need to show that $F_\varphi=0$ in $\mathcal{H}_F$ implies $F_{\partial \varphi/\partial x_j}=0$ in $\mathcal{H}_F$. This follows from \eqref{welldefinedoperators}. Indeed, let $F_\varphi=0$. Then 
$$\Vert F_{\partial\varphi/\partial x_j}\Vert^2_{\mathcal{H}_F}=
\langle F_{\partial\varphi/\partial x_j},F_{\partial\varphi/\partial x_j}\rangle
=-\langle F_\varphi,F_{\partial^2\varphi/\partial x_j^2}\rangle=0.$$ 

(iii) Follows form \eqref{welldefinedoperators}.  

(iv) Follows from the fact that each operator $S_j^F\colon\mathcal{W}_F\to\mathcal{W}_F$, $j=1,\ldots,n$, commutes with a conjugation, see \cite{Jor2}, \cite{Sch}. 

\end{proof}

Fix $j=1,\ldots,n$. Let $A_j\colon D_j=\textrm{dom}(A_j)\subset \mathcal{H}_F\to\mathcal{H}_F$ be a selfadjoint extension of the operator
$S_j^F\colon \mathcal{W}_F\subset \mathcal{H}_F\to \mathcal{W}_F\subset \mathcal{H}_F$.
Let $E_j$ be the orthogonal projection-valued measure associated to $A_j$, and
let $U_j$ be the strongly continuous one-parameter group of unitary operators defined by $A_j$, i.e.,
\begin{equation*}
U_j(t)=e^{itA_j}=\int_{\mathbb{R}}e^{itx}dE_j(x)
\end{equation*}
for any $t\in\mathbb{R}$. Then $A_j$ is uniquely determined by $U_j$, and for $f\in D_j$ and $t\in \mathbb{R}$ we have that $U_j(t)f\in D_j$. Moreover,
\begin{equation*}
D_j=\{f\in\mathcal{H}_F\colon \frac{d}{dt}|_{t=0}U_j(t)f=\lim_{t\to 0}\frac{U_j(t)f-f}{t}\textrm{ exists}\},
\end{equation*}
and for $f\in D_j$ and $s\in\mathbb{R}$, we have
\begin{equation}\label{aj}
A_jf=-i\frac{d}{dt}|_{t=0}U_j(t)f\quad\textrm{and} \quad \frac{d}{dt}|_{t=s}U_j(t)f=iU_j(s)A_jf=iA_jU_j(s)f.
\end{equation}

If the operators $A_j$ strongly commute, then the product projection-valued measure $E=E_1\times \ldots \times E_n$ on $\mathbb{R}^n$ exists, and the corresponding strongly continuous $n$-parameter group of unitary transformations on $\mathcal{H}_F$ is given by
\begin{equation*}
U(t)=e^{i\sum_{j=1}^nt_jA_j}=\prod_{j=1}^n\int_{\mathbb{R}}e^{it_jx_j}dE_j(x_j) =\int_{\mathbb{R}^n}e^{i t\cdot x}dE(x),
\end{equation*}
where $t=(t_1,\ldots,t_n)\in\mathbb{R}^n$. We will need the following. We denote by $(e_1,\ldots,e_n)$ the standard basis of $\mathbb{R}^n$. 

\begin{lem}
Let $a,b\in\mathbb{R}$ with $a<b$ be such that $[ae_j,be_j]=\{a(1-t)e_j+bte_j\colon 0\leq t\leq 1\}\subset\Omega$. Then 
\begin{equation}\label{translate}
U_j(b-a)\gamma_{ae_j}=\gamma_{be_j}.
\end{equation}
\end{lem}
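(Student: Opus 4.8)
The plan is to reduce the identity to a purely local statement about the smooth elements $F_\varphi$, where the chosen selfadjoint extension $A_j$ and its one-parameter group $U_j$ can be controlled in norm, and only afterwards to pass to the limit that defines $\gamma_{ae_j}$. The key device is that, although $\gamma_{ae_j}$ is merely a norm-limit of elements of $\mathcal{W}_F$ and need not itself lie in $\mathrm{dom}(A_j)$, each mollifier approximant $F_{ae_j,\epsilon}$ \emph{does} lie in $\mathcal{W}_F\subset\mathrm{dom}(A_j)$, and on such vectors $U_j$ acts by honest translation. Throughout I use that $A_jF_\psi=S_j^F F_\psi=-iF_{\partial\psi/\partial x_j}$ for $F_\psi\in\mathcal{W}_F$, that differentiation passes through the convolution $F_\psi=F^e\star\psi$, and that $F_{ae_j,\epsilon}\to\gamma_{ae_j}$ in $\mathcal{H}_F$ (Lemmas \ref{lem1} and \ref{lem2}).

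\textbf{Step 1 (local translation identity on $\mathcal{W}_F$).} Fix $\varphi\in C_0^\infty(\Omega)$ and, for $s$ in a small interval, set $(\tau_s\varphi)(x)=\varphi(x-se_j)$, the direction of the shift being chosen to match the convention $U_j(t)=e^{itA_j}$. As long as $\mathrm{supp}(\tau_s\varphi)\subset\Omega$ we have $\tau_s\varphi\in C_0^\infty(\Omega)$, hence $F_{\tau_s\varphi}\in\mathcal{W}_F$, and the curve $y(s)=F_{\tau_s\varphi}$ is norm-$C^1$ with $\tfrac{d}{ds}F_{\tau_s\varphi}=F_{\partial_s\tau_s\varphi}$. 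Using $\partial_s\tau_s\varphi=\pm\,\partial(\tau_s\varphi)/\partial x_j$ and $iA_jF_{\tau_s\varphi}=F_{\partial(\tau_s\varphi)/\partial x_j}$, one identifies $y'(s)=iA_jy(s)$ with $y(0)=F_\varphi$. By uniqueness of solutions of $y'=iA_jy$, which follows from Stone's theorem by checking $\tfrac{d}{ds}\bigl(U_j(-s)y(s)\bigr)=U_j(-s)\bigl(-iA_jy(s)+y'(s)\bigr)=0$, we conclude $U_j(s)F_\varphi=F_{\tau_s\varphi}$ for every admissible $s$.

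\textbf{Steps 2--3 (translate a mollifier, then take the limit).} Apply Step 1 with $\varphi=\phi_{ae_j,\epsilon}$, the mollifier of unit mass supported in $\overline{B_\epsilon(ae_j)}$. Then $\tau_s\phi_{ae_j,\epsilon}$ is again a mollifier, now centered so that at $s=b-a$ it equals $\phi_{be_j,\epsilon}$; thus $U_j(b-a)F_{ae_j,\epsilon}=F_{be_j,\epsilon}$. This is exactly where the hypothesis $[ae_j,be_j]\subset\Omega$ is used: as $s$ runs over $[0,b-a]$ the supports $\overline{B_\epsilon(\,\cdot\,e_j)}$ sweep an $\epsilon$-tube around the \emph{compact} segment $[ae_j,be_j]$, and by openness of $\Omega$ this tube lies in $\Omega$ once $\epsilon$ is small enough, making all intermediate $s$ admissible. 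Finally, since $U_j(b-a)$ is unitary, hence norm-continuous, it commutes with the limit: $U_j(b-a)\gamma_{ae_j}=\lim_{\epsilon\to0}U_j(b-a)F_{ae_j,\epsilon}=\lim_{\epsilon\to0}F_{be_j,\epsilon}=\gamma_{be_j}$, which is \eqref{translate}.

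\textbf{Main obstacle.} The essential difficulty, and the reason one cannot simply differentiate $t\mapsto U_j(t)\gamma_{ae_j}$ directly, is that $\gamma_{ae_j}$ generally fails to lie in $\mathrm{dom}(A_j)$, and $\mathcal{W}_F$ is in general \emph{not} a core for $A_j$; indeed $S_j^F$ need not be essentially selfadjoint, and that very nonuniqueness is the heart of the extension problem. Consequently a naive weak-ODE uniqueness argument tested only against $\mathcal{W}_F$ does not close. The route above avoids this entirely by establishing the translation identity at the level of the smooth vectors $F_\varphi$, where norm-differentiability and Stone uniqueness are genuinely available, and by applying the \emph{bounded} operator $U_j(b-a)$ only at the limiting stage. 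The one remaining technical point is the uniform tube estimate in Step 2, which uses compactness of the segment together with openness of $\Omega$; the precise sign and direction of the shift are then fixed by tracking the conventions for $U_j=e^{itA_j}$ and for $S_j^F$.
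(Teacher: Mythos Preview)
Your proof is correct and follows essentially the same approach as the paper: establish that $U_j$ acts by translation on the mollifier approximants $F_{ae_j,\epsilon}\in\mathcal{W}_F$ via a derivative-vanishing argument, and then pass to the limit $\epsilon\to 0$ using unitarity of $U_j(b-a)$. The only presentational difference is that you work with the norm derivative and invoke Stone uniqueness directly (showing $\tfrac{d}{ds}U_j(-s)y(s)=0$ for $y(s)=F_{\tau_s\varphi}$), whereas the paper tests against an arbitrary $F_\varphi$ and shows the scalar function $g(t)=\langle F_\varphi,\,U_j(b-a-t)F_{(a+t)e_j,\epsilon}\rangle$ has vanishing derivative; these two routes are interchangeable here, and your flagging of the sign/direction convention is appropriate.
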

\begin{proof} Let $a,b\in\mathbb{R}$ be such that $[ae_j,be_j]\subset\Omega$, let $F_\varphi\in \mathcal{W}_F$, and let $(\phi_\epsilon)$ be a smoothing kernel. For each $\epsilon>0$ we consider a function $g_\epsilon:[0,b-a]\to \mathbb{C}$ given by
\begin{equation*}
g(t)=\langle F_\varphi, U_j(b-a-t)F_{(a+t)e_j,\epsilon}\rangle.
\end{equation*}
Then $g$ is a differentiable function with $g'(s)$, $s\in (0,a)$, equal to
\begin{equation}\label{der}
\langle F_\varphi, \left(\frac{d}{dt}|_{t=s}U_j(b-a-t)\right)F_{(a+s)e_j,\epsilon} 
+U_j(b-a-s)\frac{d}{dt}|_{t=s}F_{(a+t)e_j,\epsilon}\rangle.
\end{equation}
Indeed, we just need to show that the derivative 
\begin{equation*}
\frac{d}{dt}|_{t=s}\langle F_\varphi,F_{(a+t)e_j,\epsilon}\rangle 
\end{equation*}
exists. We will show the following
\begin{equation}\label{maineq}
\frac{d}{dt}|_{t=s}\langle F_\varphi,F_{(a+t)e_j,\epsilon}\rangle=\langle F_\varphi,\frac{\partial}{\partial x_j}F_{(a+s)e_j,\epsilon}\rangle.
\end{equation}
We compute
\begin{align*}
& \frac{d}{dt}|_{t=s}\langle F_\varphi,F_{(a+t)e_j,\epsilon}\rangle \\
&=\lim_{t\to s}\frac{1}{t-s}\int_{\Omega}\int_{\Omega} F(y-x)\varphi(x)(\phi_{(a+t)e_j,\epsilon}(y)-\phi_{(a+s)e_j,\epsilon}(y))dxdy.
\end{align*}
Since for any $y\in \Omega$ we have that
\begin{equation*}
\lim_{t\to s}\frac{1}{t-s}(\phi_{(a+t)e_j,\epsilon}(y)-\phi_{(a+s)e_j,\epsilon}(y))=\frac{\partial \phi_{(a+s)e_j,\epsilon}}{\partial x_j}(y),
\end{equation*}
with the help of the dominated convergence theorem, we deduce that 
\begin{align*}
\frac{d}{dt}|_{t=s}\langle F_\varphi,F_{(a+t)e_j,\epsilon}\rangle &=\int_{\Omega}\int_{\Omega} F(y-x)\varphi(x)\frac{\partial \phi_{(a+s)e_j,\epsilon}}{\partial x_j}(y)dxdy \\
&=\langle F_\varphi,F_{\partial \phi_{(a+s)e_j,\epsilon}/\partial x_j}\rangle.
\end{align*}
To get \eqref{maineq} we notice that
\begin{align*}
F_{\partial \phi_{(a+s)e_j,\epsilon}/\partial x_j} & = F^e\star \frac{\partial \phi_{(a+s)e_j,\epsilon}}{\partial x_j} \\
&=\frac{\partial }{\partial x_j}F_{(a+s)e_j,\epsilon}.
\end{align*}
It follows by \eqref{aj}, \eqref{der}, and \eqref{maineq} that $g'(s)=0$ for any $s\in (0,a)$. Indeed, we have
\begin{align*}
& g'(s) \\
& =\langle F_\varphi, \left(\frac{d}{dt}|_{t=s}U_j(b-a-t)\right)F_{(a+s)e_j,\epsilon} 
+U_j(b-a-s)\frac{d}{dt}|_{t=s}F_{(a+t)e_j,\epsilon}\rangle\\
&=\langle F_\varphi,U_j(b-a-s)\left(-\frac{\partial}{\partial x_j}F_{(a+t)e_j,\epsilon}+\frac{d}{dt}|_{t=s}F_{(a+t)e_j,\epsilon}\right)\rangle \\
&=0.
\end{align*}
Therefore
\begin{align*}
\langle F_\varphi,F_{be_j,\epsilon}-U_j(b-a)F_\epsilon\rangle 
& =g(b-a)-g(0)\\
&=0.
\end{align*}
Thus 
\begin{equation*}
\langle F_\varphi,F_{be_j,\epsilon}-U_j(b-a)F_\epsilon\rangle =0,
\end{equation*}
and hence
\begin{equation}\label{epsilon}
F_{be_j,\epsilon}=U_j(b-a)F_{ae_j,\epsilon}.
\end{equation}
We take the limit in \eqref{epsilon} when $\epsilon\to 0$ to obtain \eqref{translate}.
\end{proof}

Let, for each $j=1,\ldots,n$, an operator $A_j\colon D_j=\textrm{dom}(A_j)\subset \mathcal{H}_F\to\mathcal{H}_F$, with $\mathcal{W}_F\subset D_j$, be a selfadjoint extension of the operator $S_j^F\colon \mathcal{W}_F\subset \mathcal{H}_F\to \mathcal{W}_F\subset \mathcal{H}_F$, let $E_j$ be the spectral measure associated to $A_j$, and let $\{U_j(t)\}_{t\in\mathbb{R}}$ be the strongly continuous one-parameter unitary group of operators defined by $A_j$. Suppose that operators $A_1,\ldots,A_n$ strongly commute. This implies that for any $j,k=1,\ldots,n$ and any $s,t\in\mathbb{R}$ the unitary operators $U_j(s)$ and $U_k(t)$ commute, i.e.,
\begin{equation*}
U_j(s)U_k(t)=U_k(t)U_j(s).
\end{equation*}
For each $t=(t_1,\ldots,t_n)\in\mathbb{R}^n$, we define a unitary operator $U(t)$ by
\begin{equation*}
U(t)=U_1(t_1)\cdot\ldots\cdot U_n(t_n).
\end{equation*}
Then for any $c_1,c_2\in\mathbb{R}^n$ we have that
\begin{equation*}
U(c_1+c_2)=U(c_1)U(c_2)=U(c_2)U(c_1).
\end{equation*}
The following fact holds true.

\begin{cor} Let $a,b\in \Omega$ with $\Omega$ connected. Then 
\begin{equation}\label{maintranslation}
U(a-b)\gamma_b=\gamma_a.
\end{equation}
\end{cor}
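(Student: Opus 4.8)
The plan is to reduce \eqref{maintranslation} to the one-dimensional translation identity \eqref{translate} of the preceding lemma and then invoke the standing hypothesis of strong commutativity to reassemble the result. First I would record the evident generalization of that lemma: if $p\in\Omega$ and the whole segment $[p,p+te_j]$ lies in $\Omega$, then $U_j(t)\gamma_p=\gamma_{p+te_j}$. The proof of \eqref{translate} used only that the bump functions stay supported in $\Omega$ along the moving segment and that $U_j$ is generated by $A_j\supset S_j^F$ (via \eqref{aj} and \eqref{maineq}); the identical computation applies when the base point $ae_j$ is replaced by an arbitrary $p\in\Omega$, so this step is routine and I would not rewrite it.

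Next I would exploit connectedness of $\Omega$. Since $\Omega$ is open and connected, it is connected by axis-parallel polygonal paths: fixing $b$, the set of points reachable from $b$ by a finite chain of coordinate-parallel segments contained in $\Omega$ is open, has open complement in $\Omega$ (in both cases because every open ball admits such a chain to any of its points), and is non-empty, hence equals $\Omega$. Thus I obtain a chain $b=p_0,p_1,\ldots,p_N=a$ with each edge $[p_{k-1},p_k]\subset\Omega$ parallel to some axis $e_{j_k}$, say $p_k=p_{k-1}+t_ke_{j_k}$. Applying the generalized lemma to each edge gives $\gamma_{p_k}=U_{j_k}(t_k)\gamma_{p_{k-1}}$, and composing along the chain yields
\begin{equation*}
\gamma_a=U_{j_N}(t_N)\cdots U_{j_1}(t_1)\gamma_b.
\end{equation*}

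Here the commutativity enters. The operators $U_j(s)$ pairwise commute, so I may regroup the product by coordinate direction, and within each direction the one-parameter group law $U_j(s)U_j(s')=U_j(s+s')$ collapses the factors. The net parameter collected in direction $j$ is $\sum_{k\colon j_k=j}t_k$, which telescopes to the coordinate difference $a_j-b_j$ independently of the chosen path. Hence
\begin{equation*}
U_{j_N}(t_N)\cdots U_{j_1}(t_1)=U_1(a_1-b_1)\cdots U_n(a_n-b_n)=U(a-b),
\end{equation*}
and \eqref{maintranslation} follows.

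The main obstacle is precisely this last regrouping: it is the only point where the hypothesis that $A_1,\ldots,A_n$ strongly commute is genuinely used, and without it the ordered product $U_{j_N}(t_N)\cdots U_{j_1}(t_1)$ could not be rearranged and would in general depend on the polygonal path rather than only on its endpoints. Thus \eqref{maintranslation} is really the integrated, path-independent form of the commutation relations, and the substantive content lies in verifying that commutativity legitimizes both the regrouping and the telescoping of the parameters to $a-b$; the polygonal connectedness and the generalization of the translation lemma are standard.
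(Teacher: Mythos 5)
Your proposal is correct and follows essentially the same route as the paper: an axis-parallel polygonal chain from $b$ to $a$ inside the open connected set $\Omega$, the preceding translation lemma applied edge by edge, and commutativity of the $U_j$'s to regroup the product into $U(a-b)$. You are in fact slightly more careful than the paper in two places — noting explicitly that the lemma must be extended from segments $[ae_j,be_j]$ on the coordinate axes to arbitrary axis-parallel segments $[p,p+te_j]\subset\Omega$, and spelling out the telescoping of the parameters to $a_j-b_j$ — but these are exactly the steps the paper's one-line computation implicitly performs.
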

\begin{proof} Let $c_1,\ldots,c_m\in\Omega$ be such that vectors $a-c_1,c_2-c_1,\ldots,c_m-c_{m-1},b-c_m\in\mathbb{R}^n$ are parallel to the coordinate axes and such that the intervals $$[a,c_1],[c_1,c_2],\ldots,[c_{m-1},c_m],[c_m,b]$$ are contained in $\Omega$. Then, by the previous lemma,
\begin{align*}
U(a-b)\gamma_b=U(a-c_1)U(c_1-c_2)\ldots U(c_m-b)\gamma_b=\gamma_a.
\end{align*}
\end{proof} 

\section{Proof of the Main Theorem}\label{ProofMT}
Beginning with a fixed locally defined positive definite function as in theorem \ref{MTh}, we consider possible positive definite extensions. Let $G\colon \mathbb{R}^n\to\mathbb{C}$ be a continuous positive definite function (if an extension exists). Then by the theorem of Bochner there exists a finite Borel measure $\mu$ such that $G=\hat{\mu}$. For $G_\varphi,G_\psi\in \mathcal{W}_G$, where $\varphi,\psi\in C^\infty_0(\mathbb{R}^n)$, we have
\begin{align*}
\langle G_\varphi,G_\psi\rangle &=\int_{\mathbb{R}^n}\int_{\mathbb{R}^n}G(y-x)\varphi(x)\overline{\psi(y)}dxdy \\
&=\int_{\mathbb{R}^n}\left(\int_{\mathbb{R}^n}e^{-i\langle t,x\rangle}\varphi(x)dx\right)\left(\overline{\int_{\mathbb{R}^n}e^{-i\langle t,y\rangle}\psi(y)dy}\right)d\mu(t).
\end{align*}
We denote for $\varphi\in C^\infty_0(\mathbb{R}^n)$
\begin{equation*}
\widehat{\varphi}(t)=\int_{\mathbb{R}^n}e^{-i\langle t,x\rangle}\varphi(x)dx,\quad
t\in\mathbb{R}^n.
\end{equation*}
Then
\begin{equation}\label{wip}
\langle G_\varphi,G_\psi\rangle=\int_{\mathbb{R}^n}\widehat{\varphi}(t)\overline{\widehat{\psi}(t)}d\mu(t).
\end{equation}
We have proven the following.
\begin{lem}\label{isome}
The linear mapping
\begin{equation}\label{isomdef}
\mathcal{W}_G\ni G_\varphi\mapsto \widehat{\varphi}\in L^2(d\mu)
\end{equation}
is an isometry. Therefore it can be extended by closure to a linear isometry $W\colon\mathcal{H}_G\to L^2(d\mu)$. 
\end{lem}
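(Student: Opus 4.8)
The plan is to extract the entire statement from the identity \eqref{wip}, which has just been established, and then to invoke the standard continuous-extension theorem. I would write $T\colon\mathcal{W}_G\to L^2(d\mu)$ for the candidate map $G_\varphi\mapsto\widehat{\varphi}$ and organize the argument in two steps: first that $T$ is a well-defined isometry on $\mathcal{W}_G$, and then that it extends by continuity to $\mathcal{H}_G$.

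The one point that genuinely requires attention is well-definedness of $T$: the recipe refers to the test function $\varphi$, but an element of $\mathcal{W}_G$ determines only $G_\varphi$ and not $\varphi$ itself, since the linear map $\varphi\mapsto G_\varphi$ may have a nontrivial kernel. I would therefore first check that $G_\varphi=G_{\varphi'}$ in $\mathcal{H}_G$ forces $\widehat{\varphi}=\widehat{\varphi'}$ in $L^2(d\mu)$. Both this and the isometry property should drop out of \eqref{wip} at once: since $\varphi\mapsto G_\varphi$ and $\varphi\mapsto\widehat{\varphi}$ are linear, applying \eqref{wip} with $\varphi-\varphi'$ in place of both entries gives
\begin{equation*}
\Vert G_\varphi-G_{\varphi'}\Vert_{\mathcal{H}_G}^2=\int_{\mathbb{R}^n}|\widehat{\varphi}(t)-\widehat{\varphi'}(t)|^2\,d\mu(t)=\Vert\widehat{\varphi}-\widehat{\varphi'}\Vert_{L^2(d\mu)}^2.
\end{equation*}
The left-hand side vanishes exactly when $G_\varphi=G_{\varphi'}$ in $\mathcal{H}_G$, forcing the right-hand side to vanish as well; this settles well-definedness. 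Specializing to $\varphi'=0$ (equivalently, taking $\psi=\varphi$ in \eqref{wip}) would then give $\Vert G_\varphi\Vert_{\mathcal{H}_G}=\Vert\widehat{\varphi}\Vert_{L^2(d\mu)}$, so that $T$ is a linear isometry on $\mathcal{W}_G$.

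To finish, I would pass to the completion. Since $\mathcal{H}_G$ is by construction the completion of $\mathcal{W}_G$, the subspace $\mathcal{W}_G$ is dense; an isometry is $1$-Lipschitz and hence uniformly continuous; and the target $L^2(d\mu)$ is complete. Thus for any Cauchy sequence $(G_{\varphi_n})$ in $\mathcal{W}_G$ the images $(\widehat{\varphi_n})$ are Cauchy in $L^2(d\mu)$ and converge, with the limit depending only on the class of $(G_{\varphi_n})$ in $\mathcal{H}_G$ precisely because $T$ is isometric. This is the standard bounded-linear-transformation extension, and it would produce a unique linear $W\colon\mathcal{H}_G\to L^2(d\mu)$ agreeing with $T$ on $\mathcal{W}_G$; passing to the limit in $\Vert G_{\varphi_n}\Vert=\Vert\widehat{\varphi_n}\Vert$ would show that $W$ is again an isometry.

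I do not expect a real obstacle here: the substance is entirely contained in \eqref{wip}, and the only thing that could conceivably fail, namely the dependence of $\widehat{\varphi}$ on the representative $\varphi$, is exactly cancelled by the equality of the two norms. The one thing I would be careful not to overclaim is that $W$ is unitary: the lemma asserts only that $W$ is an isometry \emph{into} $L^2(d\mu)$, and surjectivity, equivalently density of $\{\widehat{\varphi}\colon\varphi\in C^\infty_0(\mathbb{R}^n)\}$ in $L^2(d\mu)$, is a separate question that is not needed here.
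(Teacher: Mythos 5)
Your argument is correct and follows essentially the same route as the paper: the entire content is the identity \eqref{wip}, from which the isometry on $\mathcal{W}_G$ and the extension by continuity to the completion $\mathcal{H}_G$ follow by standard arguments. Your explicit treatment of well-definedness (that $G_\varphi=G_{\varphi'}$ in $\mathcal{H}_G$ forces $\widehat{\varphi}=\widehat{\varphi'}$ in $L^2(d\mu)$) is a point the paper leaves implicit, and it is handled correctly by the same norm identity.
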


The following lemma will be applied to positive definite functions defined on $\mathbb{R}^n$ arising as extensions of positive definite functions defined on $\Omega-\Omega$.

\begin{lem}\label{onto} The linear mapping $W\colon\mathcal{H}_G\to L^2(d\mu)$ form lemma \ref{isome} is onto.
\end{lem}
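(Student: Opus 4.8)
The plan is to exploit that $W$ is an isometry obtained by closure from the dense subspace $\mathcal{W}_G\subset\mathcal{H}_G$. Consequently the range of $W$ is a \emph{closed} subspace of $L^2(d\mu)$ equal to the closure of $W(\mathcal{W}_G)=\{\widehat{\varphi}\colon \varphi\in C^\infty_0(\mathbb{R}^n)\}$. Hence surjectivity of $W$ is equivalent to the density of $\{\widehat{\varphi}\colon \varphi\in C^\infty_0(\mathbb{R}^n)\}$ in $L^2(d\mu)$, and I would prove density by showing that the orthogonal complement of this set in $L^2(d\mu)$ is trivial. So I would fix $h\in L^2(d\mu)$ with $\int_{\mathbb{R}^n}\widehat{\varphi}(t)\overline{h(t)}\,d\mu(t)=0$ for every $\varphi\in C^\infty_0(\mathbb{R}^n)$ and aim to conclude $h=0$ in $L^2(d\mu)$.

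Next I would unfold the definition $\widehat{\varphi}(t)=\int_{\mathbb{R}^n}e^{-i\langle t,x\rangle}\varphi(x)\,dx$ and interchange the order of integration. This is justified by Fubini's theorem: $\mu$ is a finite measure, $\varphi$ is continuous with compact support, and $h\in L^2(d\mu)\subset L^1(d\mu)$ by Cauchy--Schwarz (finiteness of $\mu$), so the integrand is absolutely integrable on the product. After interchanging, the orthogonality hypothesis becomes
\begin{equation*}
\int_{\mathbb{R}^n}\varphi(x)\,\Phi(x)\,dx=0\quad\text{for all }\varphi\in C^\infty_0(\mathbb{R}^n),\qquad \Phi(x):=\int_{\mathbb{R}^n}e^{-i\langle t,x\rangle}\overline{h(t)}\,d\mu(t).
\end{equation*}
The function $\Phi$ is continuous on $\mathbb{R}^n$ by dominated convergence (again using $h\in L^1(d\mu)$ and $|e^{-i\langle t,x\rangle}|=1$), and a continuous function that annihilates every test function must vanish identically, so $\Phi\equiv 0$.

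Finally I would interpret $\Phi\equiv 0$ spectrally. The quantity $\overline{h}\,d\mu$ is a finite complex Borel measure on $\mathbb{R}^n$, and $\Phi$ is precisely its Fourier transform. By the uniqueness (injectivity) of the Fourier transform on the space of finite complex Borel measures, $\Phi\equiv 0$ forces $\overline{h}\,d\mu=0$ as a measure, i.e.\ $\overline{h(t)}=0$ for $\mu$-almost every $t$, hence $h=0$ in $L^2(d\mu)$. This shows the orthogonal complement is trivial, the image is dense, and since the image is closed, $W$ is onto. The only nonroutine ingredient is the injectivity of the Fourier transform on finite complex measures; everything else (Fubini, continuity of $\Phi$, the closed-range argument) is standard, so that classical uniqueness statement is the conceptual crux, while the Fubini bookkeeping is the main place where care is needed.
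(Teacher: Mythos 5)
Your proof is correct and follows essentially the same route as the paper: both show the range is closed because $W$ is an isometry, and then prove density by showing that any $h\in L^2(d\mu)$ orthogonal to all $\widehat{\varphi}$ must vanish, the crux being injectivity of the Fourier transform applied to the finite measure $\overline{h}\,d\mu$. The only difference is packaging: the paper phrases this via the tempered distribution $T_h=h\,d\mu$ and the vanishing of its distributional Fourier transform, whereas you carry out the Fubini computation explicitly and invoke the classical uniqueness theorem for Fourier transforms of finite complex Borel measures, which is an equivalent (and if anything more self-contained) formulation of the same step.
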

\begin{proof} The range $R(W)$ of the mapping $W$ is a closed subspace of $L^2(d\mu)$, since $W$ is an isometry, by lemma \ref{isome}. We will show that $R(W)$ is a dense subspace of $L^2(d\mu)$, which will finish the proof. Let $f\in L^2(d\mu)$ and suppose that for any $\varphi\in C^\infty_0(\mathbb{R}^n)$
\begin{equation}\label{dense1}
\int_{\mathbb{R}^n}f\widehat{\varphi}d\mu=0.
\end{equation}
We define a tempered distribution $T_f\in \mathcal{S}'$ by
\begin{equation}\label{distribution}
T_f(\varphi)=\int_{\mathbb{R}^n}
f\varphi d\mu,
\end{equation}
where $\varphi\in\mathcal{S}$ (a verification that $T_f$ is indeed a tempered distribution is left to the reader; we recommend \cite{RudFA} for a nice treatment of distribution theory). Then \eqref{dense1} says that the Fourier transform $\widehat{T_f}$ of our distribution $T_f$ is the zero distribution. Therefore $T_f=0$. Hence $\int_{\mathbb{R}^n}f\varphi d\mu=0$ for any $\varphi\in C^\infty_0(\mathbb{R}^n)$. Since $C^\infty_0(\mathbb{R}^n)$ is a dense subspace of $L^2(d\mu)$, $f=0$ as an element of the space $L^2(d\mu)$. Thus the range $R(W)$ of $W$ is a dense subspace of $L^2(d\mu)$.
\end{proof}

\begin{rem}\label{remdist} The tempered distribution $T_f$ in \eqref{distribution} is usually denoted by $fd\mu$.
\end{rem}

\begin{cor}\label{canisom}(See also paragraph \ref{sec82}) Let $G\colon\mathbb{R}^n\to\mathbb{C}$ be a continuous and positive definite function, and let $\mu$ be a finite positive Borel measure on $\mathbb{R}^n$ such that $G=\widehat{\mu}$. Then the Hilbert spaces $\mathcal{H}_G$ and $L^2(d\mu)$ are canonically unitarily isomorphic with the isomorphism given by the mapping $W\colon\mathcal{H}_G\to L^2(d\mu)$ defined by \eqref{isomdef}.   
\end{cor}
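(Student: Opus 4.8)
The plan is to assemble the corollary directly from the two preceding lemmas, since essentially all the analytic content has already been extracted. First I would invoke Lemma \ref{isome} to record that the map $W\colon\mathcal{H}_G\to L^2(d\mu)$ obtained by closure from \eqref{isomdef} is a linear isometry. Being an isometry, $W$ preserves norms on all of $\mathcal{H}_G$, and by the polarization identity it therefore preserves inner products:
\begin{equation*}
\langle Wf,Wg\rangle_{L^2(d\mu)}=\langle f,g\rangle_{\mathcal{H}_G}\quad\textrm{for all }f,g\in\mathcal{H}_G.
\end{equation*}
In particular $W$ is injective, since $\Vert Wf\Vert=\Vert f\Vert$ forces $f=0$ whenever $Wf=0$. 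One checks this inner-product identity first on the dense subspace $\mathcal{W}_G$, where it is exactly \eqref{wip}, and then extends it to all of $\mathcal{H}_G$ by continuity of $W$ and of the two inner products, using that $\mathcal{W}_G$ is dense in $\mathcal{H}_G$.

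Next I would invoke Lemma \ref{onto}, which asserts that $W$ is surjective onto $L^2(d\mu)$. Combining the two observations, $W$ is a linear bijection that preserves inner products, i.e.\ a unitary isomorphism of Hilbert spaces, so $\mathcal{H}_G\cong L^2(d\mu)$ via $W$. The word ``canonical'' is justified by the fact that $W$ is not chosen but is uniquely determined by the explicit prescription $G_\varphi\mapsto\widehat{\varphi}$ on the dense subspace $\mathcal{W}_G$ together with continuity; no arbitrary choices enter its construction once $G$ and $\mu$ are fixed (and $\mu$ itself is uniquely determined by $G$ via Bochner's theorem).

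I do not anticipate any genuine obstacle here: the corollary is a formal consequence of Lemmas \ref{isome} and \ref{onto}, and the only point requiring a word of care is the passage from norm-preservation to inner-product preservation on the full completion $\mathcal{H}_G$, which is the routine polarization-plus-density argument indicated above. Thus the proof reduces to the single sentence that an onto isometry between Hilbert spaces is unitary, with the two lemmas supplying the isometry and surjectivity respectively.

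\begin{proof}
By Lemma \ref{isome}, the map $W\colon\mathcal{H}_G\to L^2(d\mu)$ extending \eqref{isomdef} is a linear isometry, hence injective; by polarization it preserves inner products. By Lemma \ref{onto}, $W$ is surjective. A surjective linear isometry between Hilbert spaces is a unitary isomorphism, so $W$ realizes the desired canonical unitary isomorphism $\mathcal{H}_G\cong L^2(d\mu)$.
\end{proof}
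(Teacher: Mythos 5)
Your proof is correct and follows exactly the route the paper intends: the corollary is stated as an immediate consequence of Lemma \ref{isome} (isometry) and Lemma \ref{onto} (surjectivity), and your observation that a surjective linear isometry between Hilbert spaces is unitary is the whole argument. The polarization-plus-density remark is a reasonable bit of added care but introduces nothing beyond the paper's implicit reasoning.
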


\begin{proof} [Proof of theorem \ref{MTh}] $(\Leftarrow)$ Let $F\colon\Omega-\Omega\to\mathbb{C}$ be a continuous and positive definite function. We will construct a continuous positive definite function $G\colon\mathbb{R}^n\to\mathbb{C}$ that extends $F$. By assumption there exist selfadjoint strongly commuting operators $A_j\colon D_j\subset \mathcal{H}_F\to\mathcal{H}_F$ that extend operators $S_j^F\colon \mathcal{W}_F\subset \mathcal{H}_F\to \mathcal{W}_F\subset \mathcal{H}_F$, where $j=1,\ldots,n$. Let $E_j$ be the orthogonal projection-valued measure associated to $A_j$.
We define the following strongly continuous $n$-parameter group of unitary transformations of $\mathcal{H}_F$. For $t=(t_1,\ldots,t_n)\in\mathbb{R}^n$, put
\begin{align*}
U(t) &=e^{i\sum_{j=1}^nt_jA_j}
=\int_{\mathbb{R}^n}e^{i\langle t,x\rangle}dE(x),
\end{align*} 
where $E$ is the product measure (see lemma \ref{opvm}). We fix $x_0\in\Omega$ and we define a function $G\colon\mathbb{R}^n\to\mathbb{C}$ by
\begin{equation}\label{exten}
G(t)=\langle \gamma_{x_0},U(t)\gamma_{x_0}\rangle.
\end{equation}
Therefore $G$ is a continuous positive definite function. We will show that $G$ is an extension of our function $F$. For $a-b\in\Omega-\Omega$ we obtain
\begin{align*}\label{finalcomp}
G(a-b) & =\langle \gamma_{x_0},U(a-b)\gamma_{x_0}\rangle \\
&=\langle U(b)\gamma_{x_0},U(a)\gamma_{x_0}\rangle \\
& =\langle U(x_0)U(b-x_0)\gamma_{x_0},U(x_0)U(x_0-a)\gamma_{x_0}\rangle \\
&=\langle U(x_0)\gamma_b,U(x_0)\gamma_a\rangle \\
&=\langle \gamma_b,\gamma_a\rangle \\
&=F(a-b).
\end{align*} 
This proves that $G\colon\mathbb{R}^n\to\mathbb{C}$ is a continuous positive definite extension of $F$.

($\Rightarrow$) Let $F\colon\Omega-\Omega\to\mathbb{C}$ be a continuous positive definite function, and let $G\colon \mathbb{R}^n\to\mathbb{C}$ be a continuous positive definite function that extends $F$. Let, by the theorem of Bochner, $\mu$ be a finite Borel measure such that $G=\hat{\mu}$.

Below we shall work in the Hilbert space $\mathcal{H}_F$ and the inner product $\langle\cdot,\cdot\rangle=\langle\cdot,\cdot\rangle_{\mathcal{H}_F}$ will be understood to be that of the fixed reproducing kernel Hilbert space $\mathcal{H}_F$.

\textit{Step 1}. There exists a unique projection-valued measure $E\colon (\mathbb{R}^n,\mathcal{B}_n)\to\textrm{Proj}(\mathcal{H}_F)$ such that
\begin{equation}\label{extpvm}
\langle F_\varphi,E(B)F_\psi\rangle=\int_B\widehat{\varphi}(t)\overline{\widehat{\psi}(t)}d\mu(t),
\end{equation}
where $B\in \mathcal{B}_n$ is a Borel subset of $\mathbb{R}^n$ and $F_\varphi,F_\psi\in \mathcal{W}_F$, i.e., $\varphi,\psi\in C^\infty_0(\Omega)$.

The construction of such a projection-valued measure is given in \cite{NelB} and \cite{RudFA}, therefore we just sketch the proof. The uniqueness follows from density of $\mathcal{W}_F=\{F_\varphi\in C^\infty_0(\Omega)\}$ in $\mathcal{H}_F$. Moreover, for any $F_\varphi,F_\psi\in \mathcal{W}_F$ and any Borel set $B\subset\mathbb{R}^n$, by Cauchy-Schwarz,
\begin{align*}
\left| \int_B\widehat{\varphi}(t)\overline{\widehat{\psi}(t)}d\mu(t) \right|^2 &
\leq \int_B|\widehat{\varphi}(t)|^2d\mu(t)\int_B|\widehat{\psi}(t)|^2d\mu(t) \\
& \leq \int_{\mathbb{R}^n}|\widehat{\varphi}(t)|^2d\mu(t)\int_{\mathbb{R}^n}|\widehat{\psi}(t)|^2d\mu(t) \\
& \leq \Vert F_\varphi\Vert_{\mathcal{H}_F}^2\Vert F_\psi\Vert_{\mathcal{H}_F}^2.
\end{align*}
Therefore \eqref{extpvm} determines a unique bounded selfadjoint orthogonal projection $E(B)\colon \mathcal{H}_F\to\mathcal{H}_F$. To show sigma-additivity, let $\{B_k\}_{k\in\mathbb{N}}$ be a collection of disjoint Borel sets. Then for $F_\varphi\in \mathcal{W}_F$
\begin{align*}
\langle F_\varphi,E(\cup_{k=1}^\infty B_k)F_\varphi\rangle &=\int_{\cup_{k}B_k}|\widehat{\varphi}(t)|^2d\mu(t)\\
&=\sum_{k=1}^\infty\int_{B_k}|\widehat{\varphi}(t)|^2d\mu(t)\\
&=\sum_{k=1}^\infty \langle F_\varphi,E(B_k)F_\varphi\rangle,
\end{align*}
so $E(\cup_{k=1}^\infty B_k)=\sum_{k=1}^\infty E(B_k)$. Finally,
\begin{equation*}
\langle F_\varphi,E(\mathbb{R}^n)F_\varphi\rangle=\int_{\mathbb{R}^n}|\widehat{\varphi}(t)|^2d\mu(t)=\Vert F_\varphi\Vert^2_{\mathcal{H}_F}.
\end{equation*}

\textit{Step 2}. By the theorem of Stone (about one-to-one correspondence between orthogonal projection-valued measures and $n$-parameter strongly continuous group of unitary transformations, see \cite{NelB},\cite{RudFA}), the collection of operators
$\{U(t)\colon \mathcal{H}_F\to\mathcal{H}_F\}_{t\in\mathbb{R}^n}$ given by
\begin{equation}\label{stone1to1}
U(t)=\int_{\mathbb{R}^n}e^{it\cdot x}dE(x),
\end{equation}  
where $E$ is our orthogonal projection-valued measure from step 1, defines an $n$-parameter strongly continuous group of unitary transformations of $\mathcal{H}_F$. For $F_\varphi\in \mathcal{W}_F$ we define a finite Borel measure $\mu_{F_\varphi}$ on $\mathbb{R}^n$ by
\begin{equation*}
\mu_{F_\varphi}(B)=\langle F_\varphi, E(B)F_\varphi\rangle.
\end{equation*}
Then, by \eqref{extpvm}, $d\mu_{F_\varphi}=|\widehat{\varphi}|^2d\mu$ in the sense of Radon-Nikodym derivatives. In particular, for $F_\varphi\in \mathcal{W}_F$ we obtain
\begin{align*}
\langle F_\varphi,U(t)F_\varphi\rangle &=\int_{\mathbb{R}^n}e^{it\cdot x}d\mu_{F_\varphi}(x)\\
&=\int_{\mathbb{R}^n}e^{it\cdot x}|\widehat{\varphi}(x)|^2d\mu(x).
\end{align*}

\textrm{Step 3}. The unitary representation $\{U(t)\colon\mathcal{H}_F\to\mathcal{H}_F\}_{t\in\mathbb{R}^n}$ determines a collection $\{A_j\colon D_j\subset \mathcal{H}_F\to\mathcal{H}_F\}_{j=1}^n$ of strongly commuting selfadjoint operators in $\mathcal{H}_F$ as follows. We put
\begin{equation*}
D_j=\{f\in\mathcal{H}_F\colon \frac{\partial }{\partial t_j}|_{t=0}U(t)f=\lim_{h\to 0}\frac{U(he_j)f-f}{h}\textrm{ exists}\}
\end{equation*} 
and for $f\in D_j$
\begin{equation*}
A_jf=-i\frac{\partial }{\partial t_j}|_{t=0}U(t)f.
\end{equation*}
Equivalently, using our orthogonal projection-valued measure $E$, we have  
\begin{equation*}
D_j=\{f\in\mathcal{H}_F\colon \int_{\mathbb{R}^n} |x_j|^2d\mu_f(x)<\infty\},
\end{equation*}
where $\mu_f$ is a finite Borel measure define by $\mu_f(B)=\langle f,E(B)f\rangle$ for a Borel set $B\subset\mathbb{R}^n$, and
\begin{equation*}
A_j=\int_{\mathbb{R}^n}x_jdE(x).
\end{equation*}

\textit{Step 4}. It remains to show that operators $A_j$ extend operators $S_j^F$. We first show that $\mathcal{W}_F\subset D_j$. Let $F_\varphi\in \mathcal{W}_F$. Then, since $ix_j\widehat{\varphi}(x)=\widehat{\frac{\partial \varphi}{\partial x_j}}(x)$,
\begin{align*}
\int_{\mathbb{R}^n}|x_j|^2d\mu_{\varphi}(x)& =\int_{\mathbb{R}^n}|x_j\widehat{\varphi}(x)|^2d\mu(x)\\
&=\int_{\mathbb{R}^n}|\widehat{\frac{\partial\varphi}{\partial{x_j}}}(t)|^2d\mu(x)<\infty.
\end{align*} 
Thus $F_\varphi\in D_j$. Moreover, for any $F_\varphi\in \mathcal{W}_F$
\begin{align*}
\langle A_jF_\varphi,F_\varphi\rangle &
=\int_{\mathbb{R}^n}x_jd\mu_{F_\varphi}(x) \\
&=\int_{\mathbb{R}^n}x_j|\widehat{\varphi}(x)|^2d\mu(x)\\
&=\int_{\mathbb{R}^n}-i\widehat{\frac{\partial \varphi}{\partial x_j}}(x)\overline{\widehat{\varphi}(x)}d\mu(x)\\
&=\langle -iF_{\frac{\partial\varphi}{\partial x_j}},F_\varphi\rangle\\
&=\langle S_j^F(F_\varphi),F_\varphi\rangle.
\end{align*}
This proves the theorem.

\end{proof}

\begin{rem} We note that, by \eqref{maintranslation}, it follows that the extension given in \eqref{exten} is independent of the choice of a point $x_0\in\Omega$.
\end{rem}

\section{Applications and Examples}

\subsection{On the connectedness assumption in theorem \ref{MTh}}

We give an example of a continuous positive definite function defined on an open, but not connected, subset of $\mathbb{R}$ that cannot be extended to a continuous positive definite function defined on the entire $\mathbb{R}$. This shows that we cannot drop the assumption of connectedness of a set $\Omega$ in the statement of theorem \ref{MTh}.

\begin{ex}\label{connect} 
We put $\Omega=(-\frac{1}{4},\frac{1}{4})\cup (\frac{3}{4},1)$.
We also denote $\Omega_A=(-\frac{1}{4},\frac{1}{4})$ and $\Omega_B=(\frac{3}{4},1)$. 
Then one can easily show that $\Omega-\Omega=(-\frac{5}{4},-\frac{1}{2})\cup (-\frac{1}{2},\frac{1}{2})\cup (\frac{1}{2},\frac{5}{4})$. We define a function $F\colon\Omega-\Omega\to [0,1]$ as follows.
\begin{equation*}
F(x)=
\begin{cases}
1-|x|, & x\in (-\frac{1}{2},\frac{1}{2}) \\
0, & x\in (-\frac{5}{4},-\frac{1}{2})\cup (\frac{1}{2},\frac{5}{4}).
\end{cases}
\end{equation*}
We will show that $F$ is a positive definite function. Let $x_1,\ldots,x_m\in\Omega$ and let $c_1,\ldots,c_m\in\mathbb{C}$. Then
\begin{align*}
\sum_{j,k=1}^m F(x_j-x_k)c_j\overline{c_k} =\sum_{x_j,x_k\in \Omega_A}F(x_j-x_k)c_j\overline{c_k}+\sum_{x_j,x_k\in\Omega_B} F(x_j-x_k)c_j\overline{c_k}
\end{align*}
Both summations above are nonnegative, since the function 
\begin{equation*}
\mathbb{R}\ni x\to \max(0,1-|x|)
\end{equation*}
is positive definite, which follows by a computation, or by the following Polya's criterion for characteristic functions (see \cite{Pol}, \cite{L}). Clearly $F$ has no extension to a continuous positive definite function defined on the entire real line.
\begin{thm}(Polya)
Let $f\colon [0,\infty)\to [0,\infty)$ be a continuous, decreasing, and convex function such that $\lim_{x\to \infty}f(x)=0$. Then the even extension $g\colon\mathbb{R}\to [0,\infty)$ of $f$, i.e., $g(x)=f(|x|)$ for $x\in \mathbb{R}$, is a positive definite function. Moreover, if $\lambda$ is a measure such that $\widehat{\lambda}=g$, then $\lambda$ is absolutely continuous with respect to the Lebesgue measure. If $d\lambda=Adt$ is the Radon-Nikodym derivative of $\lambda$ with respect to the Lebesgue measure, where $A\in L^1(\mathbb{R})$, then
\begin{equation*}
A(t)=\frac{1}{2\pi}\int_{\mathbb{R}}e^{itx}g(x)dx.
\end{equation*} 
\end{thm}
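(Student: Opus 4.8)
The plan is to realize $g$ as a superposition of elementary positive definite ``tent'' functions and then to read off absolute continuity of the Bochner measure directly from this representation. Write $\Lambda_a(x)=\max(0,1-|x|/a)$ for $a>0$. The first step is to observe that each $\Lambda_a$ is positive definite: it is the Fourier transform $\Lambda_a(x)=\int_{\mathbb{R}}e^{ix\xi}k_a(\xi)\,d\xi$ of the nonnegative integrable Fej\'er kernel $k_a(\xi)=\frac{a}{2\pi}\big(\frac{\sin(a\xi/2)}{a\xi/2}\big)^2$, so by the converse direction of Bochner's theorem $\Lambda_a$ is continuous and positive definite. (Equivalently, $a\,\Lambda_a$ is the self-convolution $\mathbf{1}_{[-a/2,a/2]}\star\mathbf{1}_{[-a/2,a/2]}$.)

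The central step is an integral representation of $f$. Since $f$ is convex on $[0,\infty)$, its right derivative $f'$ is nondecreasing; since $f$ is decreasing with $f(\infty)=0$ we get $f'\le 0$ and $f'(\infty)=0$. Hence the Lebesgue--Stieltjes measure $d\mu:=df'$ is a positive Radon measure on $(0,\infty)$, and integrating twice (Tonelli, everything nonnegative) gives, for $x\ge 0$,
\[
f(x)=\int_{(x,\infty)}(t-x)\,d\mu(t).
\]
Passing to the even extension and using $(t-|x|)^+=t\,\Lambda_t(x)$, we obtain
\[
g(x)=\int_0^\infty \Lambda_t(x)\,d\nu(t),\qquad d\nu(t):=t\,d\mu(t),
\]
where $\nu$ is a finite positive measure with total mass $\nu((0,\infty))=f(0)=g(0)$. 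Positive definiteness of $g$ now follows by linearity: for any $x_1,\dots,x_m$ and $c_1,\dots,c_m\in\mathbb{C}$,
\[
\sum_{j,k=1}^m g(x_j-x_k)c_j\overline{c_k}=\int_0^\infty\Big(\sum_{j,k=1}^m\Lambda_t(x_j-x_k)c_j\overline{c_k}\Big)\,d\nu(t)\ge 0,
\]
since each inner sum is nonnegative and $\nu\ge 0$.

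For the absolute continuity assertion I insert the densities $k_t\ge 0$ and interchange the order of integration; Tonelli applies because $\int_0^\infty\int_{\mathbb{R}}k_t(\xi)\,d\xi\,d\nu(t)=\int_0^\infty\Lambda_t(0)\,d\nu(t)=\nu((0,\infty))<\infty$. This yields
\[
g(x)=\int_{\mathbb{R}}e^{ix\xi}A(\xi)\,d\xi,\qquad A(\xi):=\int_0^\infty k_t(\xi)\,d\nu(t)\ge 0,
\]
so that $g=\widehat{A\,d\xi}$ with $A\ge 0$ and $\int_{\mathbb{R}}A=g(0)<\infty$, whence $A\in L^1(\mathbb{R})$. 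By uniqueness of the representing measure in Bochner's theorem, $\lambda=A\,d\xi$ is absolutely continuous, and the explicit formula $A(t)=\frac{1}{2\pi}\int_{\mathbb{R}}e^{itx}g(x)\,dx$ is Fourier inversion applied to $g=\widehat{A}$.

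The step I expect to be the main obstacle is the convex integral representation: one must treat $f'$ as a nondecreasing function of bounded variation and $d\mu=df'$ as a genuine measure when $f$ is not twice differentiable, with the correct endpoint conventions, and then justify the two Tonelli interchanges. A secondary technical point concerns the inversion formula: a convex $f$ decreasing to $0$ need not be integrable (e.g.\ $f(x)=1/(1+x)$), so $g$ need not lie in $L^1(\mathbb{R})$, and the displayed formula for $A$ must be read as holding for a.e.\ $t$ in a summability (for instance Gaussian- or Abel-regularized) sense rather than as an absolutely convergent integral.
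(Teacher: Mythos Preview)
The paper does not actually prove Polya's theorem; it is quoted inside Example~7.1 with references to \cite{Pol} and \cite{L} and used only as a black box to check that $x\mapsto\max(0,1-|x|)$ is positive definite. So there is no ``paper's own proof'' to compare against.

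That said, your argument is the standard one (essentially Lukacs' presentation): represent $g$ as a mixture of triangular functions $\Lambda_t$ via the Stieltjes measure $d\mu=df'$, use that each $\Lambda_t$ is the Fourier transform of a Fej\'er kernel, and conclude both positive definiteness and absolute continuity of the Bochner measure. The derivation of $f(x)=\int_{(x,\infty)}(t-x)\,d\mu(t)$ from $f'(x)=-\mu((x,\infty))$ and $f(\infty)=0$ is correct; note only that $f'(0^+)$ may equal $-\infty$ (so $\mu$ need not be finite), but this causes no trouble since $\nu((0,\infty))=\int_{(0,\infty)} t\,d\mu(t)=f(0)<\infty$ is all you use. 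In the interchange step $g(x)=\int_0^\infty\int_{\mathbb{R}}e^{ix\xi}k_t(\xi)\,d\xi\,d\nu(t)$ you should invoke Fubini rather than Tonelli, since the integrand is complex; the absolute integrability you check is exactly the hypothesis needed. Your closing caveat about the inversion formula is well taken: as stated in the paper the integral $\frac{1}{2\pi}\int_{\mathbb{R}}e^{itx}g(x)\,dx$ is in general only a summability limit (or holds a.e.\ after an $L^1$--$L^\infty$ duality argument), since $g$ need not be in $L^1(\mathbb{R})$.
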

\end{ex}

\subsection{The reproducing kernel Hilbert space associated to a given positive definite function}\label{sec82}

There is a construction that associates a reproducing kernel Hilbert space to a positive definite kernel (see \cite{A}). We sketch the construction below in the case of a positive definite function. We recall the definition of a reproducing kernel Hilbert space (see (a) below), we follow with the construction (see (b) below), and finally we talk about relation with our space $\mathcal{H}_F$ (see (c) below). 

(a) Let $X$ be a set. A Hilbert space $\mathcal{H}$ is called a \textit{reproducing kernel Hilbert space} on $X$  if 
\begin{enumerate}
\item the elements of $\mathcal{H}$ are functions on $X$,
\item for each $x\in X$ there exists a unique element $k_x\in\mathcal{H}$ such that for any $f\in \mathcal{H}$ the following reproducing identity holds true:
\begin{equation}\label{repkerhilsp}
f(x)=\langle f,k_x\rangle_{\mathcal{H}}.
\end{equation}
\end{enumerate}
The mapping $K\colon X\times X\to\mathbb{C}$ given by $K(x,y)=\langle k_y,k_x\rangle_{\mathcal{H}}$ is called the \textit{reproducing kernel} of $\mathcal{H}$. We note that if $X$ is a topological space and if $K$ is continuous, then elements of the reproducing kernel Hilbert space $\mathcal{H}$ are continuous functions, and the mapping $X\ni x\to k_x\in \mathcal{H}$ is continuous.
 
(b) Let $F\colon\Omega-\Omega\to\mathbb{C}$ be a positive definite function, where $\Omega$ is an open subset of $\mathbb{R}^n$. We associate a reproducing kernel Hilbert space to it in the following way. For each $a\in\Omega$ we define a function $k_a\colon\Omega\to\mathbb{R}$ by
\begin{equation}\label{ka}
k_a(x)=F(x-a).
\end{equation}
Then the space $W_F=\textrm{span}\,\{k_a\colon a\in\Omega\}$ together with the following sesquilinear form
\begin{equation}\label{rkhsform}
\langle \sum_{j=1}^mc_jk_{a_j},\sum_{j=1}^md_jk_{a_j}\rangle_F=\sum_{j,k=1}^m F(a_k-a_j)c_j\overline{d_k}
\end{equation}
is a complex inner-product space. Its completion is a Hilbert space, which we will realize as a reproducing kernel Hilbert space,  called the reproducing kernel Hilbert space associated to $F$. By \eqref{rkhsform} for any $f\in W_F$ and any $a\in \Omega$ we obtain
\begin{equation*}
f(a)=\langle f,k_a\rangle_F.
\end{equation*}
If $(f_n)$ is a Cauchy sequence in $W_F$ and if $a\in \Omega$, then for any $m,n\in\mathbb{N}$ we have
\begin{equation*}
|f_m(a)-f_n(a)|=|\langle f_m-f_n,k_a\rangle_F|\leq \Vert f_m-f_n\Vert_F\Vert k_a\Vert_F.
\end{equation*}
Therefore $(f_n(a))$ is a Cauchy sequence in $\mathbb{C}$, and as such, it converges to some complex number $f(a)$. We have assigned a complex-valued function $\Omega\ni a\mapsto f(a)\in\mathbb{C}$ to the Cauchy sequence $(f_n)$. This assignment with the inner product induced from the completion of $W_F$ gives as a reproducing kernel Hilbert space $H_F$, where the constant Cauchy sequence $(f)$, $f\in W_F$, is mapped to $f$. We note that
\begin{equation*}
\langle k_a,k_b\rangle_F=F(b-a).
\end{equation*}

(c) We can apply the above argument to our Hilbert space $\mathcal{H}_F$, and realize it as a reproducing kernel Hilbert space, which in view of \eqref{rep} is exactly the space $H_F$; and the Cauchy sequence $\gamma_a$ is represented by the function $k_a$, $a\in\Omega$. To distinguish between spaces $\mathcal{H}_F$ and $H_F$ we denote by $V\colon \mathcal{H}_F\to H_F$ the isomorphism (identification) given by
the assignment
\begin{equation*}
\gamma_a\mapsto k_a.
\end{equation*}

Let $F\colon\mathbb{R}^n\to\mathbb{C}$ be a continuous positive definite function, and let $\mu$ be a finite positive Borel measure on $\mathbb{R}^n$ such that $F=\widehat{\mu}$. Then, by corollary \ref{canisom} and the above discussion, Hilbert spaces $\mathcal{H}_F$, $H_F$, and $L^2(d\mu)$ are unitarily isomorphic. 
We explicitly write down the unitary isomorphism $Z\colon L^2(d\mu)\to H_F$ such that $V=Z\circ W$. Hence we have the following diagram of intertwining operators with arrows representing unitary (isometric) isomorphisms.
\begin{equation}\label{diagram}
\xymatrix{\mathcal{H}_F \ar[d]^V \ar[r]^W  & L^2(d\mu)\ar[dl]_Z\\
H_F
}
\end{equation}
We define $Z\colon L^2(d\mu)\to H_F$ as follows. For $f\in  L^2(d\mu)$, we put 
\begin{equation}
(Zf)(x)=\int_{\mathbb{R}^n}e^{it\cdot x}f(t)d\mu(t)\quad (=\widehat{fd\mu}(x)).
\end{equation}
(see remark \ref{remdist}). We need to show: 1. $Zf\in H_F$, 2. $Z\colon L^2(d\mu)\to H_F$ is a unitary isomorphism, 3. $V=Z\circ W$. We only prove that $Zf\in H_F$, and leave the proofs of 2. and 3. to the reader. We use the following fact (see \cite{A}). 
\begin{lem}\label{inrkhs} A function $g\colon\mathbb{R}^n\to\mathbb{C}$ belongs to the reproducing kernel Hilbert space $H_F$
if and only if there exists a positive constant $C>0$ such that the mapping \begin{equation}\label{elem}
\mathbb{R}^n\times\mathbb{R}^n\ni (x,y)\mapsto C^2F(x-y)-g(x)\overline{g(y)}\in\mathbb{C}
\end{equation}
is a positive definite kernel. 
\end{lem}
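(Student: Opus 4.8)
The plan is to reduce the positive definiteness of the kernel in \eqref{elem} to a single scalar inequality and then read off membership in $H_F$ directly from it. First I would record two elementary identities, valid for any finite collection of points $x_1,\dots,x_m\in\mathbb{R}^n$ and scalars $c_1,\dots,c_m\in\mathbb{C}$. Since $\langle k_a,k_b\rangle_F=F(b-a)$, expanding a norm gives $\sum_{j,k}F(x_j-x_k)c_j\overline{c_k}=\bigl\|\sum_j\overline{c_j}k_{x_j}\bigr\|_F^2$, while the rank-one kernel satisfies $\sum_{j,k}g(x_j)\overline{g(x_k)}c_j\overline{c_k}=\bigl|\sum_j c_j g(x_j)\bigr|^2$. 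Subtracting, the kernel \eqref{elem} is positive definite for a given constant $C$ exactly when $\bigl|\sum_j c_j g(x_j)\bigr|^2\le C^2\bigl\|\sum_j\overline{c_j}k_{x_j}\bigr\|_F^2$ holds for all such data, so the whole statement is equivalent to the existence of $C$ for which this scalar inequality is valid.

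For the forward implication, suppose $g\in H_F$. Using the reproducing identity \eqref{repkerhilsp} I would write $\sum_j c_j g(x_j)=\langle g,\sum_j\overline{c_j}k_{x_j}\rangle_F$ and apply Cauchy--Schwarz; the displayed inequality then holds with $C=\|g\|_{H_F}$, so \eqref{elem} is positive definite. For the converse, the substantive direction, I would start from that inequality and use it to manufacture a functional on the dense subspace $W_F=\mathrm{span}\{k_a\}$. Writing a general element as $w=\sum_j a_j k_{x_j}$, set $\Lambda(w)=\sum_j\overline{a_j}\,g(x_j)$. Taking $c_j=\overline{a_j}$, the inequality reads $|\Lambda(w)|\le C\|w\|_F$, which simultaneously shows that $\Lambda$ is well defined on $W_F$ (if $w=0$ in $H_F$ then $\Lambda(w)=0$, independently of the chosen representation) and that it is a bounded conjugate-linear functional. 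Extending $\Lambda$ by continuity to all of $H_F$ and invoking the Riesz representation theorem, I obtain $h\in H_F$ with $\Lambda(w)=\langle h,w\rangle_F$; evaluating at $w=k_x$ and using \eqref{repkerhilsp} gives $h(x)=\Lambda(k_x)=g(x)$ for every $x$, so $g=h\in H_F$.

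The main obstacle is precisely this well-definedness step in the converse: $H_F$ is an abstract completion, and a priori the assignment $w\mapsto\sum_j\overline{a_j}g(x_j)$ might depend on the representation $\sum_j a_j k_{x_j}$ of $w$. It is the hypothesis that \eqref{elem} be positive definite, i.e.\ the scalar inequality, that rescues both well-definedness and boundedness in a single stroke, after which Riesz and the reproducing property do the rest. Alternatively, one may deduce the converse from Aronszajn's kernel-domination theorem \cite{A}: positive definiteness of $C^2F(x-y)-g(x)\overline{g(y)}$ says that the rank-one kernel $g(x)\overline{g(y)}$, whose reproducing kernel Hilbert space is the line $\mathbb{C}\,g$, is dominated by $C^2F$, whence $\mathbb{C}\,g\subset H_F$ and in particular $g\in H_F$; the functional argument above is essentially a self-contained proof of this special case, and also yields the sharp value $\min C=\|g\|_{H_F}$.
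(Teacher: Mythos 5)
Your proof is correct. Note that the paper itself offers no argument for this lemma; it simply cites Aronszajn \cite{A}, so there is nothing to compare against line by line. What you have written is the standard proof of the kernel-domination criterion, and it is sound in all its steps: the identities $\sum_{j,k}F(x_j-x_k)c_j\overline{c_k}=\Vert\sum_j\overline{c_j}k_{x_j}\Vert_F^2$ and $\sum_{j,k}g(x_j)\overline{g(x_k)}c_j\overline{c_k}=\vert\sum_j c_jg(x_j)\vert^2$ correctly reduce positive definiteness of \eqref{elem} to the scalar inequality; Cauchy--Schwarz with the reproducing property gives the easy direction with $C=\Vert g\Vert_{H_F}$; and in the converse you rightly identify well-definedness of $\Lambda$ on the abstract completion as the crux, which the scalar inequality settles together with boundedness. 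Two small points worth making explicit if you write this up: the functional $\Lambda(w)=\sum_j\overline{a_j}g(x_j)$ is conjugate-linear, which matches the paper's convention that $\langle\cdot,\cdot\rangle_F$ is conjugate-linear in its second slot, so Riesz applies as you use it; and the final identification $h=g$ uses that an abstract element $h$ of the completion is \emph{realized} as the function $x\mapsto\langle h,k_x\rangle_F$, exactly the construction in part (b) of section \ref{sec82}. Your closing observation that $\min C=\Vert g\Vert_{H_F}$ and that the argument is a self-contained instance of Aronszajn's domination theorem is accurate and is a nice bonus the paper does not record.
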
 

\begin{rem} When our continuous positive function $F$ is defined on the set $\Omega-\Omega$, the above lemma still holds true with $\mathbb{R}^n$ replaced by $\Omega$. Moreover, we can rewrite the lemma using an integral conditions as in theorem \ref{MainLemma}. More precisely, a continuous function $g\colon \Omega\to\mathbb{C}$ belongs to the space $H_F$ if and only if there exists a constant $C>0$ such that
\begin{equation*}
|\int_{\Omega}g(x)\varphi(x)dx|^2\leq C\int_\Omega\int_\Omega F(y-x)\varphi(x)\overline{\varphi(y)}dxdy
\end{equation*}
for any $\varphi\in C^\infty_0(\Omega)$.
\end{rem} 

We establish that the mapping in \eqref{elem} with $g=Zf$, where $f\in L^2(d\mu)$, and with $C=\Vert f\Vert_{L^2(d\mu)}$ is a positive definite kernel, so $\widehat{fd\mu}\in H_F$. Let $x_1,\ldots,x_m\in\mathbb{R}^n$ and let $c_1,\ldots,c_m\in\mathbb{C}$. Then
\begin{equation*}
\sum_{j,k=1}^mc_j\overline{c_k}F(x_j-x_k)=\int_{\mathbb{R}^n} \left|\sum_{j=1}^m c_je^{it\cdot x_j}\right|^2d\mu(t),
\end{equation*}
and
\begin{equation*}
\sum_{j,k=1}^mc_j\overline{c_k}(Zf)(x_j)\overline{(Zf)(x_k)}=\left|\int_{\mathbb{R}^n} \sum_{j=1}^m c_je^{it\cdot x_j}f(t)d\mu(t)\right|^2.
\end{equation*}
Therefore, by Cauchy-Schwarz inequality,
\begin{equation*}
\sum_{j,k=1}^mc_j\overline{c_k}[C^2F(x_j-x_k)-(Zf)(x_j)\overline{(Zf)(x_k)}]\geq 0.
\end{equation*}
Thus $g=Zf\in H_F$. We note that since $f\in L^2(d\mu)\subset L^1(d\mu)$, it follows that $Zf=\widehat{fd\mu}$ is a continuous function.

Let $F\colon\Omega-\Omega\to\mathbb{C}$ be a continuous and positive definite function, let $G\colon \mathbb{R}^n\to\mathbb{C}$ be a continuous and positive definite function that extends $F$, and let $\mu$ be a finite positive Borel measure such that $\widehat{\mu}=G$. We note that for any $\varphi\in C^\infty_0(\Omega)$ the function $G_\varphi\colon\mathbb{R}^n\to\mathbb{C}$ extends the function $F_\varphi\colon \Omega\to\mathbb{C}$ and $\Vert F_\varphi\Vert_{\mathcal{H}_F}=\Vert G_\varphi\Vert_{\mathcal{H}_G}$. Therefore, the mapping $\mathcal{W}_F\ni F_\varphi\mapsto G_\varphi\in \mathcal{W}_G$ extends by closure to an isometric embedding from $\mathcal{H}_F$ to $\mathcal{H}_G$. Hence, we can view $\mathcal{H}_F$ as a closed subspace of $\mathcal{H}_G$. Similar remark hold for the spaces $H_F$ and $H_G$.
\begin{cor} We have the following decomposition
\begin{equation*}
L^2(d\mu)=W(\mathcal{H}_F)\oplus \{f\in L^2(d\mu)\colon \widehat{fd\mu}=0 \textrm{ on }\Omega\}.
\end{equation*}
\end{cor}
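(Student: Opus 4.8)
The plan is to exploit the fact, established in Corollary~\ref{canisom}, that $W\colon\mathcal{H}_G\to L^2(d\mu)$ is a \emph{unitary} isomorphism, together with the isometric embedding $\mathcal{H}_F\hookrightarrow\mathcal{H}_G$ recorded in the paragraph just above the corollary. Since $\mathcal{H}_F$ sits as a closed subspace of $\mathcal{H}_G$ and $W$ is unitary, the image $W(\mathcal{H}_F)$ is automatically a closed subspace of $L^2(d\mu)$, so the orthogonal decomposition
\begin{equation*}
L^2(d\mu)=W(\mathcal{H}_F)\oplus W(\mathcal{H}_F)^{\perp}
\end{equation*}
holds with no further work. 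The entire content of the corollary is therefore the identification of the orthogonal complement $W(\mathcal{H}_F)^{\perp}$ with the set $\{f\in L^2(d\mu)\colon \widehat{fd\mu}=0\text{ on }\Omega\}$.

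First I would observe that, under the embedding $F_\varphi\mapsto G_\varphi$, the subspace $\mathcal{H}_F\subset\mathcal{H}_G$ is the closure of $\{G_\varphi\colon\varphi\in C^\infty_0(\Omega)\}$, while by \eqref{isomdef} we have $W(G_\varphi)=\widehat{\varphi}$. Hence $W(\mathcal{H}_F)$ is the closed linear span in $L^2(d\mu)$ of $\{\widehat{\varphi}\colon\varphi\in C^\infty_0(\Omega)\}$, and an element $f\in L^2(d\mu)$ lies in $W(\mathcal{H}_F)^{\perp}$ precisely when $\int_{\mathbb{R}^n}f(t)\overline{\widehat{\varphi}(t)}\,d\mu(t)=0$ for every $\varphi\in C^\infty_0(\Omega)$.

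The main computation is to rewrite this orthogonality condition. Writing $\overline{\widehat{\varphi}(t)}=\int_{\mathbb{R}^n}e^{i\langle t,x\rangle}\overline{\varphi(x)}\,dx$ and interchanging the order of integration, I would obtain
\begin{equation*}
\int_{\mathbb{R}^n}f(t)\overline{\widehat{\varphi}(t)}\,d\mu(t)=\int_{\mathbb{R}^n}\overline{\varphi(x)}\left(\int_{\mathbb{R}^n}e^{i\langle t,x\rangle}f(t)\,d\mu(t)\right)dx=\int_{\Omega}\overline{\varphi(x)}\,\widehat{fd\mu}(x)\,dx,
\end{equation*}
the last equality using $\mathrm{supp}\,\varphi\subset\Omega$ together with the identity $\widehat{fd\mu}(x)=\int_{\mathbb{R}^n}e^{i\langle t,x\rangle}f(t)\,d\mu(t)$ from the definition of $Z$. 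The interchange is justified because $\mu$ is finite, so $f\in L^2(d\mu)\subset L^1(d\mu)$, while $\varphi$ is bounded with compact support; thus the integrand is absolutely integrable and Fubini applies.

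Finally I would conclude that $\int_\Omega\overline{\varphi}\,\widehat{fd\mu}\,dx$ vanishes for all $\varphi\in C^\infty_0(\Omega)$ if and only if $\widehat{fd\mu}\equiv 0$ on $\Omega$. Here I use that $\widehat{fd\mu}=Zf$ is a \emph{continuous} function (noted in the discussion preceding the corollary, since $f\in L^1(d\mu)$), so that the vanishing of all its integrals against test functions supported in the open set $\Omega$ forces it to vanish identically there. This identifies $W(\mathcal{H}_F)^{\perp}=\{f\in L^2(d\mu)\colon\widehat{fd\mu}=0\text{ on }\Omega\}$ and completes the proof. The only steps requiring care are the Fubini interchange (handled by finiteness of $\mu$) and the passage from integrated to pointwise vanishing (handled by continuity of $\widehat{fd\mu}$); neither is a genuine obstacle, so the real work is the bookkeeping identification of $W(\mathcal{H}_F)$ as the closed span of the functions $\widehat{\varphi}$, $\varphi\in C^\infty_0(\Omega)$.
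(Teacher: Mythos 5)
Your proposal is correct and follows essentially the same route as the paper: the paper's (terse) proof sets $T=W|_{\mathcal{H}_F}$, notes it is an isometry (hence has closed range), and states that $T^\star f=\widehat{fd\mu}|_{\Omega}$, so the complement of $W(\mathcal{H}_F)$ is $\ker T^\star$. Your Fubini computation is precisely the verification of that adjoint formula, and your use of continuity of $\widehat{fd\mu}$ to pass from integrated to pointwise vanishing on $\Omega$ is the right way to close the argument.
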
 
\begin{proof} Follows from the above discussion. With the identification of spaces $\mathcal{H}_F$ with $H_F$ and $\mathcal{H}_G$ with $H_G$, setting $T\colon \mathcal{H}_F\to L^2(d\mu)$ by $T(F_\varphi)=\widehat{\varphi}$, where $\varphi\in C^\infty_0(\Omega)$, we get that $T$ is an isometry and $T^\star f=\widehat{fd\mu}|_{\Omega}$ for all $f\in L^2(d\mu)$.
\end{proof}

\begin{rem}
There are examples with $\dim \mathcal{H}_F=1$ and $\dim L^2(d\mu)=\infty$.
\end{rem}

The following property holds for any connected open set $\Omega$ in $\mathbb{R}^n$, and any continuous extendable positive definite function $F$ on   $\Omega-\Omega$. The conclusion of the corollary is an \textit{interpolation} property for the pair $(\Omega, F)$. 

\begin{cor}\label{cor27} Let $\Omega\subset\mathbb{R}^n$ be a non-empty, open, and connected set, and let $F\colon\Omega-\Omega\to\mathbb{C}$ be a continuous positive definite function. Assume that $F$ has a continuous positive definite extension $\tilde{F}\colon\mathbb{R}^n\to\mathbb{C}$; see theorem \ref{MTh}. Then for every $x\in \mathbb{R}^n$ the function $\tilde{F}(\cdot-x)\colon\Omega\to\mathbb{C}$ belongs to the reproducing kernel Hilbert space $H_F$ of functions on $\Omega$.  
\end{cor}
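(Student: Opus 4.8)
The plan is to verify directly that $g := \tilde{F}(\cdot - x)|_\Omega$ meets the membership criterion of Lemma \ref{inrkhs}, in the form noted in the remark following it (with $\mathbb{R}^n$ replaced by $\Omega$). Since $\tilde{F}$ is continuous, $g$ is a continuous function on $\Omega$, so it will suffice to produce a constant $C>0$ for which the kernel
\[
(y,z)\longmapsto C^2 F(y-z)-g(y)\overline{g(z)},\qquad y,z\in\Omega,
\]
is positive definite. Here I use that $F(y-z)=\tilde{F}(y-z)$ for $y,z\in\Omega$, since $\tilde{F}$ extends $F$ on $\Omega-\Omega$.

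The key step is to linearize everything inside an $L^2$-space. By Bochner's theorem I write $\tilde{F}=\widehat{\mu}$ for a finite positive Borel measure $\mu$, and for $y\in\mathbb{R}^n$ I set $e_y(t)=e^{it\cdot y}\in L^2(d\mu)$. Then $\tilde{F}(y-z)=\langle e_y,e_z\rangle_{L^2(d\mu)}$ and $g(y)=\tilde{F}(y-x)=\langle e_y,e_x\rangle_{L^2(d\mu)}$, while $\|e_x\|_{L^2(d\mu)}^2=\mu(\mathbb{R}^n)=\tilde{F}(0)=F(0)$. Assuming $F(0)>0$ (otherwise $F\equiv 0$ and the claim is trivial), let $P_x$ denote the orthogonal projection of $L^2(d\mu)$ onto $\mathbb{C}\,e_x$. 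A one-line computation using conjugate symmetry then gives $g(y)\overline{g(z)}=F(0)\,\langle P_x e_y,e_z\rangle$, so that
\[
C^2 F(y-z)-g(y)\overline{g(z)}=\big\langle (C^2 I-F(0)P_x)\,e_y,\;e_z\big\rangle.
\]

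It then remains to choose $C^2\ge F(0)$: since $0\le P_x\le I$, the operator $B:=C^2 I-F(0)P_x$ is a bounded positive selfadjoint operator on $L^2(d\mu)$. Hence, for any $y_1,\dots,y_m\in\Omega$ and $c_1,\dots,c_m\in\mathbb{C}$, writing $v=\sum_j c_j e_{y_j}$,
\[
\sum_{j,k=1}^m c_j\overline{c_k}\,\big\langle B e_{y_j},e_{y_k}\big\rangle=\langle Bv,v\rangle\ge 0,
\]
which is exactly positive definiteness of the kernel above. By Lemma \ref{inrkhs} this yields $g=\tilde{F}(\cdot-x)|_\Omega\in H_F$.

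I expect the only real obstacle to be the bookkeeping of the $L^2(d\mu)$ representation — identifying $g(y)\overline{g(z)}$ as the rank-one correction $F(0)P_x$ and fixing the inner-product convention so that $B$ is manifestly positive; everything after that is routine. I note that connectedness of $\Omega$ plays no role in this argument. An alternative, more structural proof is available from the material just developed: the isometric embedding $\mathcal{H}_F\hookrightarrow\mathcal{H}_G$ (equivalently $H_F\hookrightarrow H_G$, with $G=\tilde{F}$) sends $\gamma_a\mapsto\gamma_a^{G}$, so its adjoint is the restriction map $H_G\to H_F$, $h\mapsto h|_\Omega$; applying this to the reproducing-kernel element $k_x^{G}=\tilde{F}(\cdot-x)\in H_G$ gives $\tilde{F}(\cdot-x)|_\Omega\in H_F$ immediately.
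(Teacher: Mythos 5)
Your argument is correct. Both your proof and the paper's reduce the claim to Lemma \ref{inrkhs} and, at bottom, to the Cauchy--Schwarz inequality: your positivity of $B=C^2I-F(0)P_x$ with $C^2=F(0)$ is exactly the Bessel/Cauchy--Schwarz bound $|\langle v,e_x\rangle|^2\le\|e_x\|^2\|v\|^2$. The difference is where the computation lives. The paper invokes Theorem \ref{MTh} and Corollary \ref{canisom} to produce a strongly continuous unitary representation $U$ of $\mathbb{R}^n$ on $H_F$ with $\tilde F(x)=\langle k_a,U(x)k_a\rangle_F$, and then estimates $\bigl|\langle U(x)k_a,\sum_j c_jU(y_j)k_a\rangle_F\bigr|^2\le F(0)\sum_{j,k}c_j\overline{c_k}F(y_j-y_k)$; you instead apply Bochner's theorem to the extension $\tilde F=\widehat\mu$ and realize all the relevant quantities as inner products of exponentials $e_y$ in $L^2(d\mu)$, identifying $g(y)\overline{g(z)}$ as the rank-one correction $F(0)\langle P_xe_y,e_z\rangle$. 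Your route is somewhat more economical: it needs only Bochner applied to $\tilde F$ and makes no use of the operator-theoretic machinery or of the unitary representation on $H_F$, and, as you observe, it makes transparent that connectedness of $\Omega$ is not used (the paper's proof also does not really use it beyond having already secured the extension). What the paper's formulation buys is that it stays inside the RKHS $H_F$ and exhibits the estimate as a statement about the representation $U$ itself, which fits the surrounding discussion of extensions as unitary representations. Your closing alternative --- that the adjoint of the isometric embedding $H_F\hookrightarrow H_{\tilde F}$ is restriction to $\Omega$, applied to $k_x^{\tilde F}=\tilde F(\cdot-x)$ --- is also valid and is consistent with the operator $T^\star f=\widehat{fd\mu}|_\Omega$ computed in the corollary on the decomposition of $L^2(d\mu)$.
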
 
\begin{proof} By theorem \ref{MTh} and corollary \ref{canisom}, there exists a strongly continuous unitary representation $U$ of $\mathbb{R}^n$ on $H_F$ such that if we fix $a\in\Omega$, then
\begin{equation}
\tilde{F}(x)=\langle k_a,U(x)k_a\rangle_F,\quad x\in \mathbb{R}^n.
\end{equation}
To show that 
\begin{equation}\label{fmx}
\tilde{F}(\cdot-x)\in H_F,
\end{equation}
where $x\in\mathbb{R}^n$, we use lemma \ref{inrkhs}. Without loss of generality we may assume that $F(0)=1$. We prove that for any $y_1,\ldots,y_m\in \Omega$, and $c_1,\ldots,c_m\in\mathbb{C}$, and $x\in\mathbb{R}^n$ we have the following estimate:
\begin{equation}\label{est}
\sum_{j,k=1}^m c_j\overline{c_k}(F(y_j-y_k)-\tilde{F}(y_j-x)\overline{\tilde{F}(y_k-x)})\geq 0.
\end{equation} 
The desired conclusion \eqref{fmx} will follow from lemma \ref{inrkhs}. We show that \eqref{est} holds true:
\begin{align*}
\sum_{j,k=1}^m c_j\overline{c_k}\tilde{F}(y_j-x)\overline{\tilde{F}(y_k-x)} 
& =|\langle U(x)k_a,\sum_{j=1}^mc_jU(y_j)k_a\rangle_F|^2\\
& \leq \Vert U(x)k_a\Vert^2_F\Vert \sum_{j=1}^mc_jU(y_j)k_a\Vert^2_F\\
& =F(0)\sum_{j,k=1}^mc_j\overline{c_k}F(y_j-y_k),
\end{align*}
since we have normalized $F(0)=1$.
\end{proof}

\subsection{The set of all possible extensions}

We start with the following corollary to the proof of theorem \ref{MTh} relating existence of an extension of a positive definite function $F$ to existence of a unitary representation of $\mathbb{R}^n$ on the Hilbert space $\mathcal{H}_F$. 

Let $F\colon\Omega-\Omega\to\mathbb{C}$ be a continuous positive definite function, and let $G\colon \mathbb{R}^n\to\mathbb{C}$ be a continuous positive definite function that extends $F$. We have the Hilbert space $\mathcal{H}_F$ associated to the positive definite function $F\colon \Omega-\Omega\to\mathbb{C}$, and the Hilbert space $\mathcal{H}_G$ associated to the positive definite function $G\colon \mathbb{R}^n\to\mathbb{C}$. Moreover, we have dense subsets $\mathcal{W}_F\subset\mathcal{H}_F$ and $\mathcal{W}_G\subset\mathcal{H}_G$ defined in \eqref{densew}. Let, by the theorem of Bochner, $\mu$ be a finite Borel measure such that $G=\hat{\mu}$.
We define a representation $\{V_\mu(t)\}_{t\in\mathbb{R}^n}$ of $\mathbb{R}^n$ on $L^2(d\mu)$ by
\begin{equation}\label{repmu}
(V_\mu(t)f)(x)=e_t(x)f(x),
\end{equation}
where $f\in L^2(d\mu)$, $x,t\in\mathbb{R}^n$, and 
\begin{equation}\label{spec}
e_t(x)=e^{i\sum_{j=1}^n t_jx_j}=e^{it\cdot x}.
\end{equation}

\begin{cor}\label{intert} Let $\Omega\subset\mathbb{R}^n$ be an open and connected set and let $F\colon\Omega-\Omega\to\mathbb{C}$ be a continuous positive definite function. Then $F$ is extendable to a continuous positive definite function $G$ defined on  $\mathbb{R}^n$ if and only if there exists a unitary representation $\{U(t)\}_{t\in\mathbb{R}^n}$ on $\mathcal{H}_F$ such that $(U(t),\mathcal{H}_F)$ is unitarily equivalent to the representation $(V_\mu,L^2(d\mu))$, where $\mu$ is a positive measure such that $G=\widehat{\mu}$. 
\end{cor}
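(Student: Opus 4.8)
The plan is to read this corollary as the representation-theoretic repackaging of Theorem \ref{MTh}; the one genuinely new ingredient is that the vector $\gamma_{x_0}$ is \emph{cyclic}, and this is precisely where connectedness of $\Omega$ enters. For the forward implication, suppose $F$ extends. By Theorem \ref{MTh} there are strongly commuting selfadjoint extensions $A_1,\dots,A_n$ of $S_1^F,\dots,S_n^F$; let $E$ be their joint projection-valued measure (Lemma \ref{opvm}) and $U(t)=\int_{\mathbb{R}^n}e^{it\cdot x}\,dE(x)$ the associated unitary representation of $\mathbb{R}^n$ on $\mathcal{H}_F$. Fix $x_0\in\Omega$ and put $G(t)=\langle\gamma_{x_0},U(t)\gamma_{x_0}\rangle$ as in \eqref{exten}; the forward construction in the proof of Theorem \ref{MTh} shows $G$ is a continuous positive definite extension of $F$, so $G=\widehat{\mu}$ for some finite measure $\mu$. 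I would then identify $\mu$ with the scalar spectral measure $\mu_{\gamma_{x_0}}$ of $\gamma_{x_0}$: since $\langle\gamma_{x_0},U(t)\gamma_{x_0}\rangle=\int_{\mathbb{R}^n}e^{it\cdot x}\,d\mu_{\gamma_{x_0}}(x)$, we get $\widehat{\mu_{\gamma_{x_0}}}=G=\widehat{\mu}$, whence $\mu=\mu_{\gamma_{x_0}}$ by injectivity of the Fourier transform on finite measures.

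The key step is to prove that $\gamma_{x_0}$ is cyclic for $U$. By \eqref{maintranslation}, which uses connectedness of $\Omega$, we have $U(a-x_0)\gamma_{x_0}=\gamma_a$ for every $a\in\Omega$. Consequently $\textrm{span}\{U(t)\gamma_{x_0}:t\in\mathbb{R}^n\}\supseteq\textrm{span}\{\gamma_a:a\in\Omega\}$, and the right-hand side is dense in $\mathcal{H}_F$ by \eqref{rep}. Hence $\gamma_{x_0}$ is a cyclic vector, and in particular $U$ has simple spectrum.

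With cyclicity in hand I would invoke the standard cyclic model from spectral multiplicity theory: for a cyclic representation with cyclic vector $\gamma_{x_0}$ and scalar spectral measure $\mu$, the assignment $U(t)\gamma_{x_0}\mapsto e_t$ (so $\gamma_{x_0}\mapsto 1$) extends to a unitary $\Phi\colon\mathcal{H}_F\to L^2(d\mu)$ with $\Phi\,U(t)=V_\mu(t)\,\Phi$, where $V_\mu$ is the multiplication representation \eqref{repmu}. This is exactly the asserted unitary equivalence $(U,\mathcal{H}_F)\cong(V_\mu,L^2(d\mu))$. The converse implication is then immediate: a representation $U$ of the stated form comes with a measure $\mu$ whose Fourier transform $G=\widehat{\mu}$ is, by hypothesis, a continuous positive definite extension of $F$, so $F$ is extendable; alternatively, transporting the generators of $U$ through $\Phi$ turns them into the commuting multiplication operators $x_1,\dots,x_n$ on $L^2(d\mu)$, which strongly commute, so Theorem \ref{MTh} applies.

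The main obstacle is the cyclicity step together with the identification $\mu=\mu_{\gamma_{x_0}}$: one must see that a single $\gamma_{x_0}$ already generates $\mathcal{H}_F$, and this genuinely requires connectedness (Example \ref{connect} shows the conclusion can fail otherwise, since the clean equivalence with $V_\mu$ on \emph{all} of $L^2(d\mu)$ then breaks down and $\gamma_{x_0}$ need no longer be cyclic). The remaining steps are a matter of assembling Theorem \ref{MTh}, the translation identity \eqref{maintranslation}, and the cyclic-representation decomposition.
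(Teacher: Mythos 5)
Your proposal is correct and follows essentially the same route as the paper: the paper also identifies $\mu$ with the scalar spectral measure $\mu_{\gamma_{x_0}}$ and builds the intertwining unitary via $f\mapsto f(A_1,\ldots,A_n)\gamma_{x_0}$, which is exactly the inverse of your cyclic-model map $\Phi$. The only difference is one of emphasis: you spell out the cyclicity of $\gamma_{x_0}$ via \eqref{maintranslation} and the density of $\mathrm{span}\{\gamma_a\colon a\in\Omega\}$, a step the paper's proof leaves implicit when it asserts that $W$ is onto.
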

\begin{proof} This corollary is a consequence of what we did in the proof of theorem \ref{MTh} and spectral analysis of Stone; it is nicely done in the book \cite{NelB} by Nelson. Let $E$ be the orthogonal projection-valued measure from the proof of theorem \ref{MTh}. Let $x_0\in\Omega$ and let $F_0=\gamma_{x_0}\in \mathcal{H}_F$. Recall that $\mu$ is the measure $\mu_{F_0}$ given by $\mu_{F_0}(B)=\langle F_0,E(B)F_0\rangle=\Vert E(B)F_0\Vert^2_{\mathcal{H}_F}$, where $B\subset\mathbb{R}^n$ is a Borel subset of $\mathbb{R}^n$, i.e., $G(t)=\widehat{\mu}(t)=\langle F_0,U(t)F_0\rangle$, where $\{U(t)\}_{t\in\mathbb{R}^n}$ is the representation $U(t)=\int e^{it\cdot x}dE(x)$. For any $f\in L^2(d\mu)$ we have an element $f(A_1,\ldots,A_n)F_0\in\mathcal{H}_F$ given by
\begin{equation*}
f(A_1,\ldots,A_n)F_0=\left(\int_{\mathbb{R}^n} f(x_1,\ldots,x_n)dE(x)\right)F_0.
\end{equation*}
Moreover,
\begin{align*}
\Vert f(A_1,\ldots,A_n)F_0\Vert^2_{\mathcal{H}_F}=\int_{\mathbb{R}^n}|f(x)|^2d\mu(x).
\end{align*}
Therefore the mapping $W\colon \mathcal{H}_F\to L^2(d\mu)$ defined by $W(f(A_1,\ldots,A_n)F_0)=f$, for $f\in L^2(d\mu)$, is an unitary isomorphism. From the construction, $W$ intertwines representations $U(t)$ and $V_\mu(t)$, i.e., $WU(t)=V_\mu(t)W$. Indeed, $$U(t)f(A_1,\ldots,A_n)F_0=g_t(A_1,\ldots,A_n),$$ where $g_t(x_1,\ldots,x_n)=e^{it\cdot x}f(x_1,\ldots,x_n)$. This finishes the proof.

\end{proof}

Using the explicit unitary isomorphisms of Hilbert spaces $\mathcal{H}_F$, $H_F$, we can rewrite the above corollary with $\mathcal{H}_F$ replaced by $H_F$.

We now prove, using theorem \ref{MTh}, that when $n=1$ an extension always exists. This is the original result of Krein (\cite{Kn}).

\begin{cor}\label{corkrein}$(n=1)$ A continuous positive definite function $F\colon\Omega-\Omega\to\mathbb{C}$, with $\Omega=(p,r)\subset \mathbb{R}$,  being a bounded and open interval, always has an extension to a continuous positive definite function defined on the entire real line $\mathbb{R}$. Moreover, there is either precisely one, or at least a one-parameter family of distinct continuous positive definite extensions.
\end{cor}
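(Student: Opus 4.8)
The plan is to reduce the $n=1$ case to von Neumann's theory of selfadjoint extensions of a single symmetric operator, and then to match the selfadjoint extensions of $S^F:=S_1^F$ with the positive definite extensions of $F$ through the correspondence furnished by the proof of Theorem~\ref{MTh}.

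First I would dispose of existence. For $n=1$ the system $\{S_j^F\}_{j=1}^n$ reduces to the single operator $S^F$, so the strong commutativity hypothesis of Theorem~\ref{MTh} is vacuous, and an extension of $F$ exists if and only if $S^F$ admits a selfadjoint extension. By the lemma of Section~\ref{PDO}, $S^F$ is densely defined, Hermitian, and has equal deficiency indices (part (iv), since it commutes with a conjugation); hence von Neumann's theorem provides at least one selfadjoint extension $A$, and the $(\Leftarrow)$ direction of Theorem~\ref{MTh} turns $A$ into a continuous positive definite extension $G$ of $F$ via \eqref{exten}. Here the hypothesis that $\Omega=(p,r)$ is a bounded open interval supplies the connectedness demanded by Theorem~\ref{MTh}.

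To obtain the dichotomy I would parametrize the selfadjoint extensions and then transport the parametrization to extensions of $F$. By von Neumann's formula the selfadjoint extensions of $S^F$ are in bijection with the unitaries $V\colon\mathcal{N}_+\to\mathcal{N}_-$ between the deficiency subspaces, i.e.\ with $U(d)$ where $d$ is the common deficiency index: if $d=0$ then $\overline{S^F}$ is the unique selfadjoint extension, while if $d\geq1$ then $U(d)$ has real dimension $d^2\geq1$ and yields at least a one-parameter continuum of distinct selfadjoint extensions. The bridge to positive definite functions is the assignment $A\mapsto G_A$, $G_A(t)=\langle\gamma_{x_0},e^{itA}\gamma_{x_0}\rangle$. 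Surjectivity of this assignment onto the set of extensions of $F$ is the content of the $(\Rightarrow)$ direction of Theorem~\ref{MTh}: starting from $G=\widehat{\mu}$, Steps 1--4 build a selfadjoint extension $A$ whose scalar spectral measure for the vector $\gamma_{x_0}$ is exactly $\mu$, so that $G_A=\widehat{\mu}=G$.

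The main obstacle is injectivity, namely that distinct selfadjoint extensions give genuinely distinct positive definite extensions. For this I would exploit the translation identity \eqref{maintranslation}, which yields $\gamma_a=e^{i(a-x_0)A}\gamma_{x_0}$ for all $a\in\Omega$; as $a-x_0$ runs through an open interval, this shows $\gamma_{x_0}$ is a cyclic vector for the unitary group generated by $A$ (and likewise for any other extension). If $G_A=G_{A'}$, then the scalar spectral measures $\mu_{\gamma_{x_0}}^{A}$ and $\mu_{\gamma_{x_0}}^{A'}$ have equal Fourier transforms and hence coincide with a common measure $\mu$; the spectral theorem for a cyclic operator then produces unitaries $W_A,W_{A'}\colon\mathcal{H}_F\to L^2(d\mu)$ carrying $\gamma_{x_0}$ to the constant function $1$ and $e^{itA}$, $e^{itA'}$ to multiplication by $e^{it\cdot x}$. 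Evaluating on the dense set $\{\gamma_a\colon a\in\Omega\}$ through the cyclicity identity forces $W_A=W_{A'}$, whence $A=A'$. Consequently $A\mapsto G_A$ is a bijection, so the number of continuous positive definite extensions of $F$ equals the number of selfadjoint extensions of $S^F$: exactly one when $d=0$, and at least a one-parameter family when $d\geq1$. This is the claimed dichotomy and recovers Krein's theorem.
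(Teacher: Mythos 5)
Your proposal is correct, and the existence half runs along the same lines as the paper (conjugation $\Rightarrow$ equal deficiency indices $\Rightarrow$ von Neumann $\Rightarrow$ the $(\Leftarrow)$ direction of Theorem \ref{MTh}); the only cosmetic difference is that the paper exhibits the conjugation explicitly, $JF_\varphi=F_{\varphi_K}$ with $K(t)=-t+p+r$, rather than citing part (iv) of the lemma in section \ref{PDO}. For the multiplicity statement, however, you take a genuinely different and heavier route than the paper. The paper's argument is one line: the set of continuous positive definite extensions of $F$ is convex, so two distinct extensions $G_0,G_1$ immediately give the one-parameter family $tG_0+(1-t)G_1$, $t\in[0,1]$, of distinct extensions; no spectral analysis is needed and the dichotomy follows at once. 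You instead establish that $A\mapsto G_A$ is a \emph{bijection} between selfadjoint extensions of $S^F$ and continuous positive definite extensions of $F$, and then read off the dichotomy from von Neumann's parametrization by $U(d)$. Your injectivity argument --- cyclicity of $\gamma_{x_0}$ via \eqref{maintranslation}, equality of the scalar spectral measures, and uniqueness of the spectral transform on the dense span of the $\gamma_a$ --- is sound, and it actually supplies a proof of something the paper only asserts as ``immediate'' (Corollary \ref{corresp}); it also yields the sharper conclusion that uniqueness of the extension is equivalent to vanishing deficiency indices, which the paper obtains separately via Nelson's theorem (Theorem \ref{tk2}). So your argument proves more, at the cost of considerably more machinery than the convexity observation the corollary actually requires.
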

\begin{proof} We need to show that the operator
$S^F\colon \mathcal{W}_F\subset \mathcal{H}_F\to\mathcal{W}_F\subset\mathcal{H}_F$, $S^F(F_\varphi)
=-iF_{\varphi'}$,
has a selfadjoint extension. We define $K\colon (p,r)\to (p,r)$ by $K(t)=-t+p+r$. For $\varphi\in C^\infty_0(\Omega)$ put $\varphi_K\colon \Omega\to\mathbb{C}$ by $\varphi_K(t)=\overline{\varphi(K(t))}$, and then set $JF_{\varphi}=F_{\varphi_K}$. Then $J\colon\mathcal{W}\to\mathcal{W}$ is a conjugation, i.e,. $J$ is conjugate-linear $J(iF_\varphi)=-iJF_\varphi$, $J^2={\rm Id}$, and $\Vert JF_\varphi\Vert=\Vert F_\varphi\Vert$ for any $F_\varphi\in \mathcal{W}_F$. 
Moreover $J\circ(S^F)=(S^F)\circ J$. This follows from the fact that $J$ is conjugate-linear and that $\frac{d \varphi_K}{dx}=-(\frac{d \varphi}{d x})_{K}$. Thus $S^F$, as a Hermitian operator that commutes with a conjugation, has an extension to a selfadjoint operator, by an extension theorem of von Neumann (see for example proposition 13.25 in \cite{Sch}). The set of all continuous positive definite extensions is convex, hence if there are two continuous extensions, there is a one-parameter family of distinct continuous positive definite extensions. 
\end{proof}

Let $\Omega\subset\mathbb{R}^n$ be an open and connected set, and let $F\colon\Omega-\Omega\to\mathbb{C}$ be a continuous positive definite function. We denote by $\mathcal{K}_F$ the set of all finite positive Borel measures $\mu$ on $\mathbb{R}^n$ such that $\widehat{\mu}$ is an extension of the funtion $F$, i.e.,
$F(z)=\widehat{\mu}(z)$ for $z\in\Omega-\Omega$. We also denote by $\mathcal{C}_F$ the set of all systems $\{A_1,\ldots,A_n\}$ of selfadjoint strongly commuting extensions of operators $S_1^F,\ldots,S_n^F$. An immediate consequence of theorem \ref{MTh} is the following. 

\begin{cor}\label{corresp} There exists a bijective correspondence between sets $\mathcal{K}_F$ and $\mathcal{C}_F$. 
\end{cor}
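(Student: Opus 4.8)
The plan is to read off the two maps directly from the two halves of the proof of Theorem~\ref{MTh} and then verify that they are mutually inverse. Define $\Phi\colon\mathcal{C}_F\to\mathcal{K}_F$ as follows: given a strongly commuting system $\{A_1,\dots,A_n\}\in\mathcal{C}_F$, Lemma~\ref{opvm}(4) produces a unique joint projection-valued measure $E$ on $\mathbb{R}^n$ with $A_j=\int_{\mathbb{R}^n}x_j\,dE(x)$; fixing $x_0\in\Omega$, set $\Phi(\{A_j\})=\mu_{\gamma_{x_0}}$, where $\mu_{\gamma_{x_0}}(B)=\langle\gamma_{x_0},E(B)\gamma_{x_0}\rangle$. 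The $(\Leftarrow)$ direction of Theorem~\ref{MTh} shows $\widehat{\mu_{\gamma_{x_0}}}$ extends $F$, so $\mu_{\gamma_{x_0}}\in\mathcal{K}_F$; since the extension $G=\widehat{\mu_{\gamma_{x_0}}}$ is independent of $x_0$, Bochner uniqueness makes $\Phi$ well defined. Conversely, define $\Psi\colon\mathcal{K}_F\to\mathcal{C}_F$ by sending $\mu\in\mathcal{K}_F$ to the system $A_j=\int_{\mathbb{R}^n}x_j\,dE(x)$, where $E$ is the unique PVM supplied by Step~1 of the proof of Theorem~\ref{MTh}, characterized by \eqref{extpvm}; Steps~3--4 guarantee $\Psi(\mu)\in\mathcal{C}_F$.

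First I would check $\Phi\circ\Psi=\mathrm{id}_{\mathcal{K}_F}$. Starting from $\mu$, let $E$ be its PVM from Step~1. I must show $\langle\gamma_{x_0},E(B)\gamma_{x_0}\rangle=\mu(B)$ for every Borel $B$. Approximating $\gamma_{x_0}$ by the mollified elements $F_{x_0,\epsilon}=F_{\phi_{x_0,\epsilon}}$ (Lemma~\ref{lem2}) and using \eqref{extpvm},
\[
\langle F_{x_0,\epsilon},E(B)F_{x_0,\epsilon}\rangle=\int_B|\widehat{\phi_{x_0,\epsilon}}(s)|^2\,d\mu(s).
\]
Since $\widehat{\phi_{x_0,\epsilon}}(s)\to e^{-is\cdot x_0}$ pointwise and $|\widehat{\phi_{x_0,\epsilon}}|\le1$, dominated convergence gives that the right-hand side tends to $\mu(B)$, while the left-hand side tends to $\langle\gamma_{x_0},E(B)\gamma_{x_0}\rangle$ by continuity of the form and boundedness of $E(B)$. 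Hence $\Phi(\Psi(\mu))=\mu$.

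The main obstacle is the reverse composition $\Psi\circ\Phi=\mathrm{id}_{\mathcal{C}_F}$, equivalently showing that the PVM reconstructed from $\mu=\mu_{\gamma_{x_0}}$ is the original joint PVM $E$ of $\{A_j\}$. By the uniqueness clause in Step~1 it suffices to check that the original $E$ satisfies the characterization \eqref{extpvm}; since $\mathrm{span}\{\gamma_a\colon a\in\Omega\}$ is dense, it is enough to match matrix elements against the $\gamma_a$. Here I would use the translation identity \eqref{maintranslation}, $\gamma_x=U(x-x_0)\gamma_{x_0}$ with $U(t)=\int e^{it\cdot s}\,dE(s)$, together with the fact that $U(t)$ commutes with each $E(B)$. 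A short computation, moving $U(x-x_0)^\star=U(x_0-x)$ across $E(B)$, yields
\[
\langle\gamma_x,E(B)\gamma_z\rangle=\langle\gamma_{x_0},E(B)U(z-x)\gamma_{x_0}\rangle=\int_B e^{i(z-x)\cdot s}\,d\mu(s).
\]
The same mollifier limit as above, applied to the PVM $E'$ built from $\mu$ via \eqref{extpvm} with $F_{x,\epsilon},F_{z,\epsilon}$, gives $\langle\gamma_x,E'(B)\gamma_z\rangle=\int_B e^{i(z-x)\cdot s}\,d\mu(s)$ as well. Thus $E$ and $E'$ have identical matrix elements on a dense set and therefore coincide, so $A_j=\int x_j\,dE=\int x_j\,dE'$ and $\Psi(\Phi(\{A_j\}))=\{A_j\}$.

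Having both compositions equal to the identity establishes that $\Phi$ and $\Psi$ are mutually inverse bijections between $\mathcal{C}_F$ and $\mathcal{K}_F$. I expect the only delicate points to be the interchange of limits and integrals in the mollifier arguments (routine, via $|\widehat{\phi_{a,\epsilon}}|\le1$ and dominated convergence) and the commutation/adjoint bookkeeping for $U(t)$ and $E(B)$ in the displayed identity; the conceptual content is carried entirely by the translation relation \eqref{maintranslation} and the uniqueness of the spectral data in Step~1.
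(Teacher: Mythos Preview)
Your proposal is correct and supplies the detailed verification that the paper omits entirely: the paper simply declares the corollary an ``immediate consequence of theorem~\ref{MTh}'' and gives no proof. Your construction of $\Phi$ and $\Psi$ is precisely the one implicit in the two halves of that proof, so the approach is the intended one.

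What you add beyond the paper is the check that $\Phi$ and $\Psi$ are genuinely inverse, and this is handled well. The identity $\Phi\circ\Psi=\mathrm{id}$ via the mollifier limit $|\widehat{\phi_{x_0,\epsilon}}|^2\to 1$ is clean. For $\Psi\circ\Phi=\mathrm{id}$, your use of the translation relation \eqref{maintranslation} together with the commutation of $U(t)$ and $E(B)$ to compute $\langle\gamma_x,E(B)\gamma_z\rangle=\int_B e^{i(z-x)\cdot s}\,d\mu(s)$ is the right idea, and matching this against the same limit for $E'$ gives $E=E'$ on a dense set. One small remark: to conclude $E=E'$ as projection-valued measures you are implicitly using that a bounded operator is determined by its matrix elements on a dense set, which is of course fine; and then equality of the spectral integrals $\int x_j\,dE=\int x_j\,dE'$ follows from equality of the PVMs. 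None of this is problematic, but it is worth noting that the paper's one-line claim does sweep this bijectivity argument under the rug, and your mollifier-plus-translation computation is exactly what is needed to justify it.
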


\begin{thm}\label{tk1} If $\mathcal{K}_F$ contains a measure $\mu$ having its support compact in $\mathbb{R}^n$, then $\mathcal{K}_F$ is a singleton, i.e., $\mathcal{K}_F=\{\mu\}$. 
\end{thm}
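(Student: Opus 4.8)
The plan is to exploit the bijection of Corollary \ref{corresp} between $\mathcal{K}_F$ and the set $\mathcal{C}_F$ of systems of strongly commuting selfadjoint extensions of $\{S_j^F\}_{j=1}^n$, and to show that compact support of $\mu$ forces each $S_j^F$ to be essentially selfadjoint. Starting from $\mu\in\mathcal{K}_F$ with $K:=\mathrm{supp}\,\mu$ compact, I would first produce, exactly as in the implication $(\Rightarrow)$ of the proof of Theorem \ref{MTh}, the orthogonal projection-valued measure $E$ on $\mathcal{H}_F$ determined by $\langle F_\varphi, E(B)F_\psi\rangle=\int_B\widehat{\varphi}\,\overline{\widehat{\psi}}\,d\mu$, together with the associated strongly commuting selfadjoint operators $A_j=\int_{\mathbb{R}^n}x_j\,dE(x)$ extending $S_j^F$. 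The key observation is that $E$ is supported in $K$: for $f=F_\varphi$ one has $d\mu_{F_\varphi}=|\widehat{\varphi}|^2\,d\mu$, so $\|E(K^c)F_\varphi\|^2=\mu_{F_\varphi}(K^c)=0$, and since $\mathcal{W}_F$ is dense in $\mathcal{H}_F$ and $E(K^c)$ is bounded, this propagates to $E(K^c)=0$. Consequently each $A_j=\int_K x_j\,dE(x)$ is a \emph{bounded} selfadjoint operator with $\|A_j\|\le\sup_{x\in K}|x_j|<\infty$.

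Next I would invoke Nelson's analytic vector theorem. Since $A_j$ is bounded, restricts to $S_j^F$ on $\mathcal{W}_F$, and $S_j^F$ leaves $\mathcal{W}_F$ invariant, for every $F_\varphi\in\mathcal{W}_F$ and every $n$ we have $(S_j^F)^n F_\varphi=A_j^n F_\varphi$, whence $\sum_{n\ge 0}\frac{t^n}{n!}\|(S_j^F)^n F_\varphi\|\le e^{t\|A_j\|}\|F_\varphi\|<\infty$ for all $t>0$. Thus every element of the dense domain $\mathcal{W}_F$ is an (in fact entire) analytic vector for the Hermitian operator $S_j^F$, so by Nelson's theorem (see \cite{NelB}, \cite{Sch}) $S_j^F$ is essentially selfadjoint, and its unique selfadjoint extension is $\overline{S_j^F}=A_j$.

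Finally I would pin down $\mathcal{C}_F$ and transport the conclusion back to $\mathcal{K}_F$. Any system $\{B_1,\ldots,B_n\}\in\mathcal{C}_F$ has each $B_j$ a selfadjoint extension of $S_j^F$, and essential selfadjointness forces $B_j=\overline{S_j^F}=A_j$ for every $j$. Hence $\mathcal{C}_F=\{(A_1,\ldots,A_n)\}$ is a singleton; it is nonempty because $\mu$ itself produces the strongly commuting system $\{A_j\}$. By the bijection of Corollary \ref{corresp}, $\mathcal{K}_F$ is a singleton, i.e.\ $\mathcal{K}_F=\{\mu\}$.

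I expect the main obstacle to be the verification that the spectral measure $E$ is genuinely supported in the compact set $K$ — equivalently, that the support bound (and hence boundedness of the $A_j$) persists when passing from the dense subspace $\mathcal{W}_F$ to all of $\mathcal{H}_F$ — since everything downstream, namely the analytic-vector estimate and the application of Nelson's theorem, is routine once boundedness is in hand. An alternative, more measure-theoretic route would observe that compact support makes $\widehat{\mu}$ real-analytic, so any competing $\nu\in\mathcal{K}_F$ has $\widehat{\nu}$ smooth at $0$, forcing all moments of $\nu$ to be finite and to coincide with those of $\mu$; determinacy of the compactly supported moment problem then gives $\nu=\mu$. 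I would keep the operator-theoretic argument as the primary one, as it stays entirely within the framework already developed.
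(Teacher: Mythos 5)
Your proof is correct, and it rests on the same two pillars as the paper's: the bijection of Corollary \ref{corresp}, and the observation that compact support of $\mu$ forces each $S_j^F$ to be bounded, hence to admit a unique selfadjoint extension. The difference is in how boundedness and uniqueness are obtained. The paper takes a shortcut: using \eqref{wip} it computes directly that
\begin{equation*}
|\langle S_j^F(F_\varphi),F_\varphi\rangle|=\Bigl|\int_{[-L,L]^n}t_j|\widehat{\varphi}(t)|^2d\mu(t)\Bigr|\leq L\Vert F_\varphi\Vert^2,
\end{equation*}
so $S_j^F$ is bounded on the dense domain $\mathcal{W}_F$ (using that for a symmetric operator the norm is controlled by the quadratic form), and a bounded densely defined Hermitian operator extends uniquely to a (bounded) selfadjoint operator --- no spectral measure and no analytic vector theorem are needed. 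You instead first build the projection-valued measure $E$ from the proof of Theorem \ref{MTh}, verify $E(K^c)=0$, conclude $\Vert A_j\Vert\leq\sup_{x\in K}|x_j|$, and then route uniqueness through Nelson's analytic vector theorem; each of these steps is sound (in particular your density argument for $E(K^c)=0$ and the identity $(S_j^F)^nF_\varphi=A_j^nF_\varphi$, which uses that $\mathcal{W}_F$ is invariant under $S_j^F$), but the detour through Nelson is unnecessary once boundedness is in hand, since the closure of a bounded symmetric densely defined operator is already everywhere defined and selfadjoint. Your secondary suggestion via analyticity of $\widehat{\mu}$ and determinacy of the compactly supported moment problem is a genuinely different, purely measure-theoretic route, but as you note it steps outside the operator-theoretic framework the paper is building.
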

\begin{proof} Let $\mu\in \mathcal{K}_F$ be such that $\textrm{supp}(\mu)\subset [-L,L]^n\subset\mathbb{R}^n$, for some $L>0$. In particular, $F$ has $\widehat{\mu}$ as an entire analytic extension to a positive definite function on $\mathbb{R}^n$. Fix $j=1,\ldots,n$. By \eqref{wip}, for any functions $\varphi\in C^\infty_0(\Omega)$, we obtain
\begin{align*}
|\langle S_j^F(F_\varphi),F_\varphi\rangle| &=\left|\int_{[-L,L]^n}\widehat{-i\frac{\partial\varphi}{\partial t_j}}(t)\widehat{\varphi}(t)d\mu(t)\right|\\
&=\left|\int_{[-L,L]^n}t_j|\widehat{\varphi}(t)|^2d\mu(t)\right|\\
&\leq L\Vert F_\varphi\Vert^2.
\end{align*}
Therefore $S_j^F\colon \mathcal{W}_F\to\mathcal{W}_F$ is a bounded operator on the dense subspace $\mathcal{W}_F\subset \mathcal{H}_F$, with $L$ being a common bound for all operators $S_k^F$, $k=1,\ldots,n$. In particular, $S_j^F\colon \mathcal{W}_F\to\mathcal{W}_F$, as a bounded and densely defined Hermitian operator, has a unique extension to a selfadjoint operator. The theorem follows by corollary \ref{corresp}.
\end{proof}

\begin{thm}\label{tk2} The following conditions are equivalent.
\begin{enumerate}
\item[(i)] $\mathcal{K}_F$ is a singleton.
\item[(ii)] $\{F_{\varphi-\Delta\varphi}\colon\varphi\in C^\infty_0(\Omega)\}$ is a dense subspace in $\mathcal{H}_F$, where $\Delta$ is the Laplace operator on $\mathbb{R}^n$: $\Delta=\sum_{j=1}^n(\partial/\partial x_j)^2$ 
\end{enumerate}
\end{thm}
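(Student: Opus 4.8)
The plan is to translate both conditions into a single statement about the \emph{Nelson Laplacian} $L:=\sum_{j=1}^n (S_j^F)^2$ acting on $\mathcal{W}_F$, and then to apply Nelson's commutation theorem together with the correspondence of Corollary \ref{corresp}.

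First I would record the algebraic identity underlying (ii). Since $S_j^F(F_\varphi)=-iF_{\partial\varphi/\partial x_j}$, iterating gives $(S_j^F)^2F_\varphi=-F_{\partial^2\varphi/\partial x_j^2}$, so that
\begin{equation*}
(I+L)F_\varphi=F_\varphi-F_{\Delta\varphi}=F_{\varphi-\Delta\varphi},\qquad \varphi\in C_0^\infty(\Omega).
\end{equation*}
Thus $\{F_{\varphi-\Delta\varphi}\}=\mathrm{Ran}(I+L)$. Using that each $S_j^F$ is Hermitian, $L$ is a non-negative symmetric operator on $\mathcal{W}_F$, since $\langle LF_\varphi,F_\varphi\rangle=\sum_j\|S_j^F F_\varphi\|^2\ge 0$. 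For a semibounded symmetric operator the deficiency may be computed at the real point $-1$, so $L$ is essentially selfadjoint if and only if $\mathrm{Ran}(I+L)$ is dense, equivalently $\ker(L^\star+I)=\{0\}$; see \cite{Sch}. Hence (ii) is \emph{equivalent} to essential selfadjointness of $L$ on $\mathcal{W}_F$.

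Next I would connect essential selfadjointness of $L$ to uniqueness. By Corollary \ref{corresp}, (i) holds precisely when $\mathcal{C}_F$ is a singleton, i.e. when $\{S_j^F\}$ admits a \emph{unique} strongly commuting selfadjoint extension. The implication ``$L$ essentially selfadjoint $\Rightarrow\mathcal{C}_F$ singleton'' is Nelson's commutation theorem (\cite{NelB}): when $\sum_j(S_j^F)^2$ is essentially selfadjoint, the common domain $\mathcal{W}_F$ contains a dense set of analytic vectors for $\overline{L}$, these are analytic vectors for each $S_j^F$, and consequently each $S_j^F$ is itself essentially selfadjoint with strongly commuting closures; the closures then furnish the only commuting selfadjoint extension. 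Combined with the first paragraph, this already yields (ii)$\Rightarrow$(i).

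The main obstacle is the converse (i)$\Rightarrow$(ii): one must show that uniqueness of the commuting extension forces $L$ to be essentially selfadjoint, and I would argue by contraposition. If $\mathcal{K}_F=\emptyset$ then (i) fails and, by Theorem \ref{MTh} and the previous paragraph, $L$ cannot be essentially selfadjoint, so (ii) fails as well; thus assume some $\mu_0\in\mathcal{K}_F$ and that $L$ is \emph{not} essentially selfadjoint. Fixing the spectral model $\mathcal{H}_F\cong L^2(d\mu_0)$ from the proof of Theorem \ref{MTh} (under which each $S_j^F$ becomes multiplication by the coordinate $x_j$, up to the Fourier sign convention, on a dense domain $\mathcal{D}_0$, and $L$ becomes $M_{|x|^2}|_{\mathcal{D}_0}$), the failure of essential selfadjointness means precisely that $\mathcal{D}_0$ is not a core for the maximal multiplication operator $M_{|x|^2}$, i.e. the deficiency equation $(L^\star+I)g=0$ has a nonzero solution. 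The delicate point is that a generic selfadjoint extension of $L$ need \emph{not} arise from a commuting system $\{A_j\}$, so one cannot simply invoke von Neumann multiplicity for $L$ alone; instead I would use the nonzero deficiency vector, together with the translation covariance $U(a-b)\gamma_b=\gamma_a$ from \eqref{maintranslation} and connectedness of $\Omega$, to build a \emph{second} strongly commuting selfadjoint extension whose spectral measure $\mu_1$ still satisfies $\widehat{\mu_1}=F$ on $\Omega-\Omega$, yielding $\mu_1\ne\mu_0$ in $\mathcal{K}_F$ and contradicting (i). Verifying that the constructed extension is genuinely a \emph{commuting} system and that $\widehat{\mu_1}$ restricts to $F$ on the prescribed domain — rather than merely extending $L$ — is the crux, and it is exactly here that the covariant structure coming from the positive definite function $F$, and Nelson's deficiency analysis in \cite{NelB}, are indispensable.
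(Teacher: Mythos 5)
Your reduction of condition (ii) to essential selfadjointness of $L=\sum_{j=1}^n(S_j^F)^2$ — via the identity $(I+L)F_\varphi=F_{\varphi-\Delta\varphi}$ and the range criterion for nonnegative symmetric operators — and your proof of (ii)$\Rightarrow$(i) via Nelson's commutation theorem followed by maximality of selfadjoint operators, coincide with the paper's argument (the paper calls $L$ by the name $\Delta_F$ and cites Corollary 9.3 of Nelson's \emph{Analytic vectors}). Up to that point the proposal is correct and complete.

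The genuine gap is the direction (i)$\Rightarrow$(ii), which you rightly call the crux but do not prove. Your plan — take a nonzero solution of $(L^\star+I)g=0$ and use it, together with the translation covariance \eqref{maintranslation}, to manufacture a \emph{second} strongly commuting system in $\mathcal{C}_F$ — is left entirely unexecuted, and it runs into an obstruction the paper itself records in Remark \ref{rudin}: a nontrivial deficiency space for $L$ produces additional selfadjoint extensions of $L$, but not every selfadjoint extension of $\Delta_F$ has the form $\sum_j A_j^2$ for a commuting system $\{A_j\}$, so a second extension of $L$ does not by itself yield a second element of $\mathcal{C}_F$ (nor, via Corollary \ref{corresp}, a second measure in $\mathcal{K}_F$). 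The paper closes this direction without any such construction: it invokes Nelson's Corollary 9.3 in its full ``if and only if'' form — $\Delta_F$ is essentially selfadjoint \emph{if and only if} the closures $\overline{S_1^F},\ldots,\overline{S_n^F}$ form a strongly commuting system of selfadjoint operators — together with the analytic domination of the system $\{S_j^F\}$ by $\overline{\Delta_F}$; you use only the ``if'' half of that equivalence. As written, your argument establishes (ii)$\Rightarrow$(i) and disposes of the case $\mathcal{K}_F=\emptyset$, but it does not show that uniqueness of the extension forces $\{F_{\varphi-\Delta\varphi}\colon\varphi\in C^\infty_0(\Omega)\}$ to be dense in $\mathcal{H}_F$.
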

\begin{proof} We recall that a Hermitian operator $T$ is essentially selfadjoint if its closure $\overline{T}$ is selfadjoint, i.e., $T^\star=T^{\star\star}$. The assertion (ii) says that the operator $\Delta_F=\sum_{j=1}^n(S_j^F)^2\colon \mathcal{W}_F\to\mathcal{W}_F$ is essentially selfadjoint. A theorem of Nelson (corollary 9.3 in \cite{Nel}) states that $\Delta_F$ is essentially selfadjoint if and only if the system of closed operators $\overline{S_1^F},\ldots,\overline{S_n^F}$ on $\mathcal{H}_F$ is a system of strongly commuting selfadjoint operators. Moreover the operator $\overline{\Delta_F}$ analytically dominates the system of operators $\{S_j^F\}_{j=1}^n$. The theorem follows.
\end{proof}

For a continuous positive definite function $F\colon\Omega-\Omega\to\mathbb{C}$, where $\Omega\subset\mathbb{R}^n$ is an open and connected set, we define 

\begin{equation}\label{defec}
\textrm{Def}(F)=\{f\in H_F\colon \Delta f=f\},
\end{equation}
where the equation $\Delta f=f$ is understood in the sense of distributions. We note that the space $\textrm{Def}(F)$ consists of analytic functions, which follows from ellipticity of $\Delta$. 
We obtain the following consequence of theorem \ref{tk2}.

\begin{cor}\label{cordef} The set $\mathcal{K}_F$ is a singleton if and only if $\textrm{Def}(F)=\{0\}$. 
\end{cor}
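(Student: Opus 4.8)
The plan is to read the corollary off Theorem \ref{tk2} by identifying the orthogonal complement of the subspace appearing in its condition (ii). First I would record the elementary computation that, for $\varphi\in C_0^\infty(\Omega)$,
\[
(S_j^F)^2 F_\varphi = S_j^F\bigl(-iF_{\partial\varphi/\partial x_j}\bigr) = -F_{\partial^2\varphi/\partial x_j^2},
\]
so that, writing $\Delta_F=\sum_{j=1}^n (S_j^F)^2$, one has $F_{\varphi-\Delta\varphi}=(I+\Delta_F)F_\varphi$. Thus the set in condition (ii) is exactly the range of $I+\Delta_F$ on the dense domain $\mathcal{W}_F$, and by Theorem \ref{tk2} the set $\mathcal{K}_F$ is a singleton if and only if this range is dense in $\mathcal{H}_F$, equivalently (Hilbert space basics) if and only if its orthogonal complement is $\{0\}$.

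The heart of the argument is to identify this orthogonal complement with $\textrm{Def}(F)$. I would work in the reproducing-kernel realization $H_F$ via the unitary $V\colon\mathcal{H}_F\to H_F$, $\gamma_a\mapsto k_a$, so that elements are genuine continuous functions on $\Omega$ and, by \eqref{ka}, $F_\varphi$ is represented by the $H_F$-valued integral $\int_\Omega \varphi(y)k_y\,dy$. Using the reproducing identity \eqref{repkerhilsp} (i.e. $\langle f,k_y\rangle_F=f(y)$) one obtains, for $f\in H_F$ and $\varphi\in C_0^\infty(\Omega)$,
\[
\langle f, F_\varphi\rangle_F=\int_\Omega f(y)\overline{\varphi(y)}\,dy.
\]
Applying this with $\varphi$ replaced by $\varphi-\Delta\varphi$ and integrating by parts against the compactly supported $\varphi$ (the boundary terms vanish, and $\overline{\Delta\varphi}=\Delta\overline{\varphi}$ since $\Delta$ is real), the condition $f\perp F_{\varphi-\Delta\varphi}$ becomes
\[
\int_\Omega \bigl(f-\Delta f\bigr)\overline{\varphi}\,dy=0,
\]
where $\Delta f$ is understood in the sense of distributions on $\Omega$. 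Since this holds for every $\varphi\in C_0^\infty(\Omega)$, it is equivalent to $\Delta f=f$ distributionally, i.e. $f\in\textrm{Def}(F)$ by \eqref{defec}.

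Combining the two steps, the orthogonal complement of $\{F_{\varphi-\Delta\varphi}\colon\varphi\in C_0^\infty(\Omega)\}$ is exactly $\textrm{Def}(F)$, so condition (ii) of Theorem \ref{tk2} holds if and only if $\textrm{Def}(F)=\{0\}$; together with Theorem \ref{tk2} this yields the corollary. Equivalently, since $\Delta_F$ is symmetric and nonnegative on $\mathcal{W}_F$, the displayed complement is precisely the deficiency space $\ker(\Delta_F^\star+I)$, and $\Delta_F$ corresponds under $V$ to the distributional operator $-\Delta$, so that $\Delta_F^\star f=-f$ reads $\Delta f=f$.

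The main obstacle I anticipate is the careful justification of the pairing formula $\langle f,F_\varphi\rangle_F=\int_\Omega f\overline{\varphi}\,dy$ and of the integration by parts for a merely continuous representative $f\in H_F$: one must validate $F_\varphi=\int_\Omega\varphi(y)k_y\,dy$ as a convergent $H_F$-valued integral and then transfer the (distributional) Laplacian onto $f$. Once $\Delta f=f$ is known distributionally, ellipticity of $I-\Delta$ upgrades $f$ to a real-analytic function, as already noted after \eqref{defec}, which makes all the manipulations rigorous and confirms that the complement is a space of honest solutions of $\Delta f=f$.
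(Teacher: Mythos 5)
Your proposal is correct and follows essentially the same route as the paper: reduce via Theorem \ref{tk2} to showing that the orthogonal complement of $\{F_{\varphi-\Delta\varphi}\colon\varphi\in C_0^\infty(\Omega)\}$ in $H_F$ equals $\textrm{Def}(F)$, using the reproducing identity to write $\langle f,F_\psi\rangle_F$ as $\int_\Omega f\overline{\psi}\,dy$ and then moving the Laplacian onto $f$ distributionally. The identification $F_{\varphi-\Delta\varphi}=(I+\Delta_F)F_\varphi$ and the closing remark about the deficiency space are consistent extras, not departures from the paper's argument.
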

\begin{proof} Since $\{F_{\varphi-\Delta\varphi}\colon\varphi\in C^\infty_0(\Omega)\}$ is also a subspace of the Hilbert space $H_F$, therefore, in the view of theorem \ref{tk2}, we just need to show that if $f\in H_F$, then the following conditions are equivalent.
\begin{enumerate}
\item[(1)] For any $\varphi\in C^\infty_0(\Omega)$ we have $\langle F_{\varphi-\Delta\varphi},f\rangle_F=0$.
\item[(2)] $\Delta f=f$ in the sense of distributions, i.e., for any $\varphi\in C^\infty_0(\Omega)$ we have $\int_\Omega f(\varphi-\Delta\varphi)=0.$
\end{enumerate}
This follows from the following computation. Let $\varphi\in C^\infty_0(\Omega)$. Then, by \eqref{ka},
\begin{align*}
\langle F_{\varphi-\Delta\varphi},f\rangle_F & =\left\langle \int_\Omega F(\cdot-y)(\varphi(y)-\Delta\varphi(y))dy,f\right\rangle_F\\
&=\int_\Omega \langle F(\cdot-y)(\varphi(y)-\Delta\varphi(y)),f\rangle_F dy \\
&=\int_\Omega (\varphi(y)-\Delta\varphi(y))\langle F(\cdot-y),f\rangle_F dy \\
&=\int_\Omega (\varphi(y)-\Delta\varphi(y))f(y)dy.
\end{align*}
This finishes the proof.
\end{proof}

By corollary \ref{cordef} we see that if $\textrm{Def}(F)\neq\{0\}$, then we have two cases: (1) $F$ is extendable (see example \ref{enegx}), (2) $F$ has no extensions (see remark \ref{rudin}).

\begin{ex}\label{enegx}
We give an example of a continuous and positive definite function defined on the interval $(-1,1)\subset\mathbb{R}$ with two different extensions to globally defined continuous positive definite functions.
We define $F\colon (-1,1)\to\mathbb{R}$ by $F(x)=e^{-|x|}$, and we put
\begin{equation*}
\tilde{F}_1(x)=e^{-|x|},\quad x\in\mathbb{R};
\end{equation*} 
the covariance function for the Ornstein-Uhlenbeck process; and
\begin{equation*}
\tilde{F}_2(x)=
\begin{cases}
e^{-|x|}, & |x|\leq 1 \\
e^{-1}(2-|x|), & 1<|x|\leq 2 \\
0, & 2<|x|.
\end{cases}
\end{equation*}
Then both functions $\tilde{F}_1$ and $\tilde{F}_2$ are positive definite continuous extensions of the positive definite continuous function $F$. Moreover,
\begin{equation*}
\tilde{F}_1=\widehat{\mu_1}\quad\textrm{where}\quad d\mu_1(t)=\frac{1}{\pi}\frac{dt}{1+t^2},
\end{equation*} 
and
\begin{equation*}
\tilde{F}_2=\widehat{\mu_2}\quad\textrm{where}\quad d\mu_2(t)=A(t)dt\quad\textrm{with}\quad A(t)=\frac{1}{2\pi}\int_{-2}^2e^{itx}\tilde{F}_2(x)dx.
\end{equation*}
We see that measures $\mu_1$ and $\mu_2$ are quite different. The reason being that the function $A$ is entire analytic, and that the function $t\mapsto \frac{1}{\pi}\frac{1}{1+t^2}$ is not.

Let's look at corollary \ref{cordef} in the case when $F\colon (-1,1)\to\mathbb{C}$ is given by $F(x)=e^{-|x|}$. We view $F\colon (-1,1)\to\mathbb{C}$ as a function $F\colon\Omega-\Omega\to\mathbb{C}$ where $\Omega=(0,1)$.
By corollary \ref{cordef} we know that the subspace $\textrm{Def}(F)$ is non-zero,  as $F$ has at least two extensions. On the other hand, when $n=1$ we have $\Delta=(d/dx)^2$, and the solutions to the equation $(d/dx)^2f=f$ for continuous functions $f\colon (0,1)\to\mathbb{C}$ are of the form: $f(x)=ae^{-x}+be^x$, where $a,b\in\mathbb{C}$. The reader can check that these functions are in $H_F$, so
$$\textrm{Def}(F)=\{(0,1)\ni x\mapsto ae^{-x}+be^x\colon a,b\in\mathbb{C}\}.$$ 
\end{ex}

\begin{rem}\label{rudin} (Connection with non-existence result of Rudin (\cite{Rud1})) Rudin in \cite{Rud1} gives an example of a continuous positive definite function defined on the open square $(-1,1)\times (-1,1)\subset \mathbb{R}^2$, which is not extendible. We will relate to the non-existence example of Rudin using our language. If $F\colon\Omega-\Omega\to\mathbb{C}$ is a continuous positive definite function that admits an extension, then we obtain a system $\{A_1,\ldots,A_n\}$ of strongly commuting selfadjoint extensions of operators $S_1^F,\ldots, S_n^F$. In particular, the operator $\Delta_F=\sum_{j=1}^n(S_j^F)^2$ admits a selfadjoint extension of the form $\Delta_A=\sum_{j=1}^n A_j^2$. Combining Rudin's example with our theorem \ref{tk2}, it follows that not all selfadjoint extensions of $\Delta_F$ are of the form $\Delta_A$.
\end{rem}  

\subsection{Connection with the Fuglede conjecture (see \cite{Fug74,IKT03})}
We can extend the notion of positive definiteness from functions to distributions. The definition was introduced by L. Schwartz (\cite{LS}, see also \cite{St}). Let $\Omega\subset\mathbb{R}^n$ be an open set and let $T\in \mathcal{D}'(\Omega-\Omega)$ be a distribution. For a function $\varphi\in C^\infty_0(\Omega)$ and $x\in \Omega$ we define $\varphi_x\colon x-\Omega\to\mathbb{C}$ by $\varphi_x(x-y)=\varphi(y)$, where $y\in\Omega$. Since $x-\Omega\subset\Omega-\Omega$ we can view $\varphi_x$ as an element of the space $C^\infty_0(\Omega-\Omega)$. We put 
\begin{equation*}
T_\varphi(x)=T(\varphi_x)
\end{equation*}
for $x\in\Omega$. Then $T_\varphi$ is a smooth function on $\Omega$ (compare with the convolution of a distribution with a test function). We say that $T$ is a \textit{positive definite distribution} if for any $\varphi\in C^\infty_0(\Omega)$ we have
\begin{equation*}
\int_{\Omega} T_\varphi(x)\overline{\varphi(x)}dx\geq 0
\end{equation*} 
We consider the delta Dirac function $\delta=\delta_0$ supported at $0\in\Omega-\Omega$. For $\varphi\in C^\infty(\Omega)$ we have that $\delta_\varphi(x)=\varphi_x(0)=\varphi_x(x-x)=\varphi(x)$. Therefore the delta Dirac distribution $\delta$ is positive definite. We can carry over the construction of the Hilbert space $\mathcal{H}_F$, where $F\colon\Omega-\Omega\to\mathbb{C}$ is a continuous positive definite function, to the case of the delta Dirac function $\delta$. We get the Hilbert space $\mathcal{H}_\delta$. Since $\delta_\varphi=\varphi$, we see that $\mathcal{H}_\delta=L^2(\Omega)$. 

If $\Omega=(0,1)\subset\mathbb{R}$ is the unit interval, then the densely defined operator $-i\frac{d}{dx}\colon C^\infty_0(\Omega)\subset L^2(\Omega)\to L^2(\Omega)$ has a selfadjoint extension. Therefore we have the associated projection valued-measure $E$. Denote $\Lambda=\textrm{spec}_{\textrm{pt}}(E)=\{\lambda\in\mathbb{R}\colon E(\{\lambda\})>0\}$. It is well-known that in this case the spectrum is pure point, i.e., atomic. Let's look at the following two examples. 

(1) Let $\Omega=(0,1)\cup (2,3)\subset\mathbb{R}$. Then  $\Lambda=\{0,1/4\}+\mathbb{Z}$, and the corresponding eigenspaces have dimension one. Thus, if we denote
\begin{equation*}
E_\lambda(t)=\frac{1}{\sqrt{|\Omega|}}e^{2\pi i\lambda\,t},\quad t\in\mathbb{R},
\end{equation*}
then the set $\{E_\lambda\colon \lambda\in \Lambda\}$ is an orthonormal basis of $L^2(\Omega)$. 

We define operators $V_\Omega(t)\colon L^2(\Omega)\to L^2(\Omega)$ by $V_\Omega(t)E_\lambda=E_\lambda(t)E_\lambda$, where $t\in\mathbb{R}$. Then $\{V_\Omega(t)\}_{t\in\mathbb{R}}$ is a unitary representation of $\mathbb{R}$ on $L^2(\Omega)$ (compare with corollary \ref{intert}). Moreover, the representation $\{V_\Omega(t)\}_{t\in\mathbb{R}}$ is unitarily equivalent to the following representation $\{V_{\mu_c}(t)\}_{t\in\mathbb{R}}$ of $\mathbb{R}$ on the space $l^2(\Lambda)$, which can be also viewed as the space of $L^2$-functions of $\Lambda$ with respect to the counting measure $\mu_c$. 
For a sequence $(\xi_\lambda)_{\lambda\in\Lambda}\in l^2(\Lambda)$ define
$V_{\mu_c}((\xi_\lambda)_{\lambda\in \Lambda})=(E_\lambda(t)\xi_\lambda)_{\lambda\in\Lambda}$.
Then the above representation $\{V_{\mu_c}(t)\}_{t\in\mathbb{R}}$ is unitarily equivalent to the representation $\{V_\Omega(t)\}_{t\in\mathbb{R}}$.

(2) Let $\Omega=(0,1)\cup (3,5)\subset\mathbb{R}$. Then the situation here is quite different. We have that $\Lambda=(1/2)\mathbb{Z}$ and the corresponding eigenspaces have dimension one or two. In particular, the set $\{E_\lambda\colon \lambda\in \Lambda\}$ is not an orthonormal basis of $L^2(\Omega)$. More precisely,
$L^2(\Omega)$ is unitarily isomorphic to $l^2(\mathbb{Z})\oplus l^2((1/2)\mathbb{Z})$.

The reader is referred to the papers \cite{PW01}, and \cite{JP99} regarding orthogonal Fourier exponentials and spectral pairs
for proofs of facts in (1) and (2).
 
\subsection{Stationary Gaussian Processes}\label{SGSP}

Let $(X_s)_{s\in S}$, where $S$ is a set, be a Gaussian process with mean zero on a probability space $(\Lambda,\mathcal{F},P)$ indexed by $S$. For the definition see \cite{AJL}, \cite{IMcK}. The covariance function $K\colon S\times S\to\mathbb{R}$ of $(X_s)_{s\in S}$ is given by $K(s,t)=E(X_sX_t)$.
Suppose that $S=\mathbb{R}^n$. We will call a Gaussian process \textit{stationary} if for any $s,t,h\in\mathbb{R}^n$ the covariance  function $K$ satisfies 
\begin{equation*}
K(s+h,t+h)=K(s,t).
\end{equation*}
Then $K(s,t)=K(s-t,0)$, $s,t\in\mathbb{R}^n$. Therefore $F(s)=K(s,0)$, $s\in\mathbb{R}^n$, is a positive definite function. Since any real-valued positive definite kernel is the covariance function of a  Gaussian process (\cite{PaSc72}, see also \cite{P,PBook,PV}), which is unique up to equivalence of stochastic processes, we deduce the following.

\begin{thm}
Let $\Omega\subset \mathbb{R}^n$ be a set, let $F\colon \Omega-\Omega\to\mathbb{R}$ be a continuous positive definite function, and let $(X_s)_{s\in \Omega}$ be a Gaussian process, whose covariance function is $K_F$; $K_F(s,t)=F(s-t)$, $s,t\in \Omega$. Then $F$ has an extension to a continuous positive definite function defined on $\mathbb{R}^n$ if and only if the Gaussian process $(X_s)_{s\in \Omega}$ has an extension to a stationary Gaussian process indexed by $\mathbb{R}^n$.
\end{thm}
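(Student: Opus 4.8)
The plan is to read the statement purely as a translation, via the Parthasarathy--Schmidt correspondence (\cite{PaSc72}) between real-valued positive definite kernels on $S\times S$ and mean-zero Gaussian processes indexed by $S$, each determined by the other up to equivalence of processes. The bridge is the observation, already recorded above, that stationarity of a Gaussian process $(Y_s)_{s\in\mathbb{R}^n}$ is exactly the requirement that its covariance kernel $K$ satisfy $K(s+h,t+h)=K(s,t)$, equivalently that $K(s,t)=\tilde{F}(s-t)$ for a single positive definite function $\tilde{F}(z)=K(z,0)$ on $\mathbb{R}^n$. Thus a stationary Gaussian extension of $(X_s)_{s\in\Omega}$ and a positive definite extension of $F$ are two names for the same object, and the remaining work is to check that the dictionary respects the extension and continuity constraints.

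For the forward implication, suppose $F$ admits a continuous positive definite extension $\tilde{F}\colon\mathbb{R}^n\to\mathbb{R}$. Then $K_{\tilde{F}}(s,t)=\tilde{F}(s-t)$ is a positive definite kernel on $\mathbb{R}^n\times\mathbb{R}^n$, so by \cite{PaSc72} there is a mean-zero Gaussian process $(Y_s)_{s\in\mathbb{R}^n}$ whose covariance is $K_{\tilde{F}}$. I would verify stationarity directly from $K_{\tilde{F}}(s+h,t+h)=\tilde{F}(s-t)=K_{\tilde{F}}(s,t)$, and note that for $s,t\in\Omega$ we have $s-t\in\Omega-\Omega$, where $\tilde{F}=F$, so the restricted process $(Y_s)_{s\in\Omega}$ has covariance $F(s-t)=K_F(s,t)$. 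By the uniqueness clause of the correspondence this restriction is equivalent to $(X_s)_{s\in\Omega}$, and hence $(Y_s)_{s\in\mathbb{R}^n}$ is the desired stationary extension.

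For the converse, suppose $(X_s)_{s\in\Omega}$ extends to a stationary Gaussian process $(Y_s)_{s\in\mathbb{R}^n}$ with covariance $K$. Stationarity gives $K(s,t)=K(s-t,0)$, so $\tilde{F}(z)=K(z,0)$ is a positive definite function on $\mathbb{R}^n$; since $(Y_s)$ extends $(X_s)$, the two covariances agree on $\Omega$, which forces $\tilde{F}(s-t)=F(s-t)$ for all $s,t\in\Omega$, i.e. $\tilde{F}=F$ on $\Omega-\Omega$. The one point that is not purely formal is continuity of $\tilde{F}$, and this is where I expect the only genuine obstacle, since stationarity alone does not guarantee a continuous covariance. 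The resolution uses the standard fact recorded in the remark on $F^e$ above (see \cite{BCR}), that a globally defined positive definite function continuous at the origin is continuous everywhere. Because $\Omega$ is nonempty and open, $\Omega-\Omega$ is an open neighborhood of $0$, and on it $\tilde{F}$ coincides with the continuous function $F$; hence $\tilde{F}$ is continuous at $0$, and therefore continuous on all of $\mathbb{R}^n$. Thus $\tilde{F}$ is a continuous positive definite extension of $F$, which completes the equivalence.
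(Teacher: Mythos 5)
Your proposal is correct and follows essentially the same route as the paper: the theorem is deduced directly from the Parthasarathy--Schmidt correspondence between real-valued positive definite kernels and mean-zero Gaussian processes (unique up to equivalence), with stationarity of the process translating into the covariance kernel being a function of $s-t$. Your extra care with continuity in the converse direction --- invoking the fact that a globally defined positive definite function continuous at $0$ is continuous everywhere, and that $\Omega-\Omega$ is a neighborhood of $0$ --- is a genuine improvement on the paper's terse ``we deduce the following,'' though note that this step quietly uses openness of $\Omega$, which is the paper's standing assumption elsewhere but is not repeated in the hypotheses of this particular theorem.
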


\subsection{Gaussian processes whose increments in mean-square are stationary}\label{sec83}

Let $\Omega\subset\mathbb{R}^n$ be a set. A function $G\colon \Omega-\Omega\to\mathbb{R}$ is called \textit{conditionally negative definite} if for any $x_1,\ldots,x_m\in\Omega$ and any $c_1,\ldots,c_m\in\mathbb{C}$, the following condition holds true:
$\sum_{j=1}^mc_j=0\quad\Longrightarrow\quad \sum_{j,k=1}^m G(x_j-x_k)c_j\overline{c_k}\leq 0$; see definition \ref{defi}.

The proof regarding extensions of positive definite functions (Theorem \ref{MTh}) carries over to the case of conditionally negative definite functions.
As we have seen in section \ref{SGSP}, stationary Gaussian stochastic processes are associated with positive definite functions. The class of Gaussian processes that are associated to conditionally negative definite functions is the following class of Gaussian processes whose increments in mean-square are stationary. A Gaussian process $\{X_s\}_{s\in \Omega}$ indexed by $\Omega$ is said to have \textit{stationary increments in mean-square} if there exists a conditionally negative definite function $G\colon\Omega-\Omega\to\mathbb{R}$ such that for any $s,t\in\Omega$
\begin{equation*}
E(|X_s-X_t|^2)=G(s-t).
\end{equation*}
For example, fractional Brownian motion is not stationary, but its mean square-increments are stationary. The Gaussian processes whose increments in mean-square are stationary have been extensively studied in \cite{AJL} and \cite{AlJo12}.

\section{Summary}

The purpose of the section below is to highlight some main interconnections between theorems in the main body of our paper. While our focus is on extendability of positive definite functions $F$, which are given only locally referring to a fixed open connected subset $\Omega$ in $\mathbb{R}^n$, it is our contention that such extensions, when they exist, involve both symmetry and harmonic analysis inherent in the starting point, the pair $(\Omega, F)$. Recall, if  $n=1$, extendability is automatic, but even then, depending on pair $(\Omega, F)$, there is a rich variety of extensions carrying an intriguing geometric structure; in a way that is quite parallel to the notion of scattering around obstacles.

Indeed, in Corollary \ref{multop} below, we show that a comparison of two distinct extensions for a fixed $(\Omega, F)$ may be cast in the language of a \textit{scattering operator} in the sense of Lax-Phillips (\cite{LaPh}).

Let $\Omega\subset\mathbb{R}^n$ be an open and connected set, and let $F\colon\Omega-\Omega\to\mathbb{C}$ be a continuous positive definite function. 
Assume that $G\colon \mathbb{R}^n\to\mathbb{C}$ is a continuous positive definite function extending $F$. Then there exists a unique finite positive Borel measure $\mu$ on $\mathbb{R}^n$ such that $\widehat{\mu}=G$. 
We recall that we have a canonical isometric embedding $\mathcal{H}_F\hookrightarrow \mathcal{H}_G$ given by $F_\varphi\mapsto G_\varphi$. Therefore $\mathcal{H}_F$ can be viewed as a closed subspace of $\mathcal{H}_G$.

Suppose that we have two extensions $G\colon\mathbb{R}^n\to\mathbb{C}$ and $H\colon\mathbb{R}^n\to\mathbb{C}$ of $F$. Let $\mu$ be a finite Borel probability measure on $\mathbb{R}^n$ such that $\widehat{\mu}=F$, and let $\nu$ be a finite Borel probability measure on $\mathbb{R}^n$ such that $\widehat{\nu}=H$. Recall that we have unitary isomorphisms $W_\mu\colon \mathcal{H}_G\to L^2(d\mu)$, and $W_\nu\colon \mathcal{H}_H\to L^2(d\nu)$. Therefore, we have the following diagram, where we denote $W^\mu=(W_\mu)|_{\mathcal{H}_F}$ and  
$W^\nu=(W_\nu)|_{\mathcal{H}_F}$.
\begin{equation*}
\xymatrix{
& \mathcal{H}_G\ar[r]^{W_\mu} & L^2(d\mu) \ar[d]^{W^\nu(W^\mu)^\star} \\
\mathcal{H}_F \ar[ru] \ar[r] & \mathcal{H}_H\ar[r]_{W_\nu} & L^2(d\nu)
} 
\end{equation*}
The following holds, which gives existence of a \textit{scattering matrix} (in the sense of \cite{LaPh}) relating any two distinct extensions for a fixed pair $(\Omega, F)$.

\begin{cor}\label{multop} There exists a function $S\colon\mathbb{R}^n\to\mathbb{C}$ such that the following two facts hold true:
\begin{enumerate}
\item $f\in L^2(d\mu)$ if and only if $S\cdot f\in L^2(d\nu)$.
\item $W^\nu(W^\mu)^\star f=S\cdot f$ for any $f\in L^2(d\mu)$.
\end{enumerate}
\end{cor}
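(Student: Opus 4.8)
The plan is to recognise $S$ as a scattering matrix in the Lax--Phillips sense, i.e.\ to show that $T:=W^\nu(W^\mu)^\star\colon L^2(d\mu)\to L^2(d\nu)$ commutes with the common spectral (generator) algebra and is therefore implemented by multiplication by a function. First I would record that $T$ is a contraction, being the composition of the isometry $W^\nu$ with the adjoint of the isometry $W^\mu$. Next I would compute $T$ on the dense subspace $\mathcal D:=\{\widehat\varphi\colon\varphi\in C_0^\infty(\Omega)\}$. Since $W^\mu$ is isometric with $W^\mu F_\varphi=\widehat\varphi$, we have $(W^\mu)^\star\widehat\varphi=(W^\mu)^\star W^\mu F_\varphi=F_\varphi$, and since the embedding $\mathcal H_F\hookrightarrow\mathcal H_H$ sends $F_\varphi\mapsto H_\varphi$ we get $W^\nu F_\varphi=\widehat\varphi$; hence $T\widehat\varphi=\widehat\varphi$. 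The two extensions induce the \emph{same} inner product on $\mathcal H_F$, so $\langle\widehat\varphi,\widehat\psi\rangle_{L^2(d\mu)}=\langle F_\varphi,F_\psi\rangle_{\mathcal H_F}=\langle\widehat\varphi,\widehat\psi\rangle_{L^2(d\nu)}$; thus $T$ is the canonical ``identity on symbols'' map from $\overline{\mathcal D}^{\,L^2(d\mu)}=R(W^\mu)$ onto $\overline{\mathcal D}^{\,L^2(d\nu)}$, precomposed with the orthogonal projection onto $R(W^\mu)$.

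The crucial mechanism is that $T$ intertwines the coordinate-multiplication operators. Under $W^\mu$ the Hermitian operator $S_j^F$ becomes multiplication by the coordinate $x_j$: indeed $W^\mu(S_j^F F_\varphi)=-i\,\widehat{\partial\varphi/\partial x_j}=x_j\widehat\varphi$, and likewise for $W^\nu$. Because $x_j\widehat\varphi=\widehat{-i\,\partial\varphi/\partial x_j}\in\mathcal D$, the subspace $\mathcal D$ is invariant under multiplication by $x_j$, and combining this with $T\widehat\varphi=\widehat\varphi$ gives $T(x_j\widehat\varphi)=x_j(T\widehat\varphi)$ on $\mathcal D$. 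This is exactly the Lax--Phillips feature: the scattering operator commutes with the self-adjoint generators (the ``energy'' variables) of the two spectral representations $V_\mu,V_\nu$. Invoking the simple-spectrum (multiplicity-free) property established earlier in the paper, the von Neumann algebra generated by the coordinate multiplications is maximal abelian, so any operator intertwining the two such algebras is itself multiplication by a measurable function $S$. I would pin down $S$ through the reproducing vectors, using $W^\mu\gamma_a=e_{-a}$ and the analogue for $\nu$, which exhibits $S$ as the Radon--Nikodym-type density comparing $\mu$ and $\nu$ along the shared spectral subspace; property (2) then holds first for bounded symbols and extends to all of $L^2(d\mu)$ by the contraction estimate $|S|^2\,d\nu\le d\mu$.

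For property (1) I would run the same argument on the adjoint $T^\star=W^\mu(W^\nu)^\star$, obtaining a second multiplier $S'$, and then use $T^\star T=W^\mu(W^\mu)^\star$ and $TT^\star=W^\nu(W^\nu)^\star$, which are the orthogonal projections onto $R(W^\mu)$ and $R(W^\nu)$; these are multiplications by indicator functions of the relevant spectral sets, so $S'S$ and $SS'$ are those indicators, yielding the asserted equivalence $f\in L^2(d\mu)\Leftrightarrow Sf\in L^2(d\nu)$. The hard part is that $W^\mu$ and $W^\nu$ are isometries but \emph{not} onto: the local datum $(\Omega,F)$ determines only the closed subspaces $R(W^\mu)\subsetneq L^2(d\mu)$ and $R(W^\nu)\subsetneq L^2(d\nu)$, so $T$ is merely a partial isometry and $\mathcal D$ is \emph{not} a core for the (unbounded) coordinate operators on the ambient spaces. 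Consequently the intertwining on $\mathcal D$ cannot be globalised by a naive core argument, and one must descend to the reducing subspaces cut out by the joint spectral projections---where multiplicity-freeness guarantees they have the form $\mathbf 1_E L^2$---and match them correctly for $\mu$ and $\nu$, carefully separating the mutually absolutely continuous and singular parts of the two measures. This bookkeeping on the reducing subspaces, rather than any single computation, is where the real work lies, and it is precisely the simple-spectrum input that forces the intertwiner to be a genuine multiplication operator, i.e.\ a scattering matrix.
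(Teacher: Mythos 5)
First, note that the paper states Corollary \ref{multop} without giving any proof at all, so your argument has to stand on its own. Your preliminary observations are correct: $T=W^\nu(W^\mu)^\star$ is a partial isometry with initial space $R(W^\mu)$, $T\widehat\varphi=\widehat\varphi$ for $\varphi\in C^\infty_0(\Omega)$, and $T(x_j\widehat\varphi)=x_jT\widehat\varphi$ on $\mathcal D$. But the two steps that would actually deliver the corollary both fail. For part (1) you assert that $T^\star T=W^\mu(W^\mu)^\star$ and $TT^\star=W^\nu(W^\nu)^\star$ are ``multiplications by indicator functions of the relevant spectral sets.'' This is false, and it contradicts your own (correct) remark that $R(W^\mu)\subsetneq L^2(d\mu)$ in general: $R(W^\mu)$ contains the unimodular exponentials $e_{-a}=\lim_{\epsilon}\widehat{\phi_{a,\epsilon}}=W^\mu\gamma_a$, $a\in\Omega$, so if $R(W^\mu)=\mathbf{1}_E L^2(d\mu)$ then $e_{-a}\mathbf{1}_{E^c}=0$ $\mu$-a.e.\ forces $\mu(E^c)=0$ and hence $R(W^\mu)=L^2(d\mu)$. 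Equivalently, $R(W^\mu)$ is not a reducing subspace for the multiplication algebra; the paper's own decomposition $L^2(d\mu)=W(\mathcal H_F)\oplus\{f\in L^2(d\mu)\colon \widehat{fd\mu}=0\ \textrm{on}\ \Omega\}$ cuts out the complement by a condition in the $\Omega$-variable, not by a spectral support condition.

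For part (2) you candidly defer ``the real work'' of globalizing the intertwining from $\mathcal D$ to $L^2(d\mu)$, but that work cannot be done for $T$ as it stands. Since for every $t_0$ there is $\varphi\in C^\infty_0(\Omega)$ with $\widehat\varphi(t_0)\neq 0$, the open sets $\{\widehat\varphi\neq0\}$ cover $\mathbb R^n$, and the identities $S\cdot\widehat\varphi=\widehat\varphi$ in $L^2(d\nu)$ force $S=1$ $\nu$-a.e. On the other hand $T$ annihilates $R(W^\mu)^\perp$, so any $0\neq f\in R(W^\mu)^\perp$ would have to satisfy $f=S\cdot f=Tf=0$ in $L^2(d\nu)$. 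When $\mu$ and $\nu$ are mutually absolutely continuous and $R(W^\mu)^\perp\neq\{0\}$ --- both of which occur already for the two extensions of $F(x)=e^{-|x|}$ in Example \ref{enegx} --- this is a contradiction. So the maximal-abelian/simple-spectrum mechanism you invoke can at best produce an $S$ implementing $T$ on the initial space $R(W^\mu)$; proving assertion (2) for all $f\in L^2(d\mu)$, as the corollary literally demands, requires either a reinterpretation of the statement or an idea not present in your proposal.
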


\section*{Acknowledgements} The authors are pleased to thank the anonymous referee(s), and the editor, for careful reading of our paper, recommended improvements, for a list of corrections, and for thoughtful and very helpful suggestions.

\bibliographystyle{elsarticle-num}

\end{document}